\title{Enriched quasi-categories and the templicial homotopy coherent nerve}
\author{Wendy Lowen} 
\address[Wendy Lowen]{Universiteit Antwerpen, Departement Wiskunde, Middelheimcampus,
Middelheimlaan 1,
2020 Antwerp, Belgium}
\email{wendy.lowen@uantwerpen.be}
\author{Arne Mertens}
\address[Arne Mertens]{Universiteit Antwerpen, Departement Wiskunde, Middelheimcampus,
Middelheimlaan 1,
2020 Antwerp, Belgium}
\email{arne.mertens@uantwerpen.be}
\thanks{
This project has received funding from the European Research Council (ERC) under the European Union’s Horizon 2020 research and innovation programme (grant agreement No. 817762).\\
The second named author was a predoctoral fellow of the Research Foundation -  Flanders (FWO), file number 1137921N. The current paper is based on part of the resulting PhD thesis \cite{mertens2022templicial}.
}
\subjclass[2020]{18N60, 18D20 (Primary), 18N50, 18M05 (Secondary)}
\keywords{}
\DeclareMathOperator{\Ob}{Ob}
\DeclareMathOperator{\Hom}{Hom}
\DeclareMathOperator{\id}{id}
\DeclareMathOperator{\Lan}{Lan}
\DeclareMathOperator*{\colim}{colim}
\DeclareMathOperator{\Set}{Set}
\DeclareMathOperator{\Mon}{Mon}
\DeclareMathOperator{\Ab}{Ab}
\DeclareMathOperator{\Mod}{Mod}
\DeclareMathOperator{\SSet}{SSet}
\DeclareMathOperator{\Colax}{Colax}
\DeclareMathOperator{\Quiv}{Quiv}
\DeclareMathOperator{\Cat}{Cat}
\newcommand{\fint}{\mathbf{\Delta}_{f}} 
\newcommand{\simp}{\mathbf{\Delta}} 
\newcommand{\asimp}{\mathbf{\Delta}_{+}} 
\newcommand{\nec}{\mathcal{N}ec} 
\newcommand{\fnec}{\mathcal{N}ec^\text{\VarFlag}\!\!} 
\newcommand{\ffnec}{\mathcal{N}ec^\text{\VarFlag}_{f}\!\!} 
\newcommand{\ts}{S_{\otimes}} 
\newtheorem{Thm}{Theorem}[section]
\newtheorem*{Thm*}{Theorem}
\newtheorem{Lem}[Thm]{Lemma}
\newtheorem{Prop}[Thm]{Proposition}
\newtheorem{Cor}[Thm]{Corollary}
\theoremstyle{definition}
\newtheorem{Def}[Thm]{Definition}
\newtheorem{Ex}[Thm]{Example}
\newtheorem{Con}[Thm]{Construction}
\newtheorem{Not}[Thm]{Notation}
\theoremstyle{remark}
\newtheorem{Rem}[Thm]{Remark}
\begin{document}

\begin{abstract}
We lay the foundations for a theory of quasi-categories in a monoidal category $\mathcal{V}$ replacing $\Set$, aimed at realising weak enrichment in the category $S\mathcal{V}$ of simplicial objects in $\mathcal{V}$. To accomodate non-cartesian monoidal products, we make use of an ambient category $\ts\mathcal{V}$ of templicial - or “tensor-simplicial” - objects in $\mathcal{V}$, which are certain colax monoidal functors following Leinster. Inspired by the description of the categorification functor due to Dugger and Spivak, we construct a templicial analogue of the homotopy coherent nerve functor which goes from $S\mathcal{V}$-enriched categories to $\ts\mathcal{V}$.
We show that an $S\mathcal{V}$-enriched category whose underlying simplicial category is locally Kan, is turned into a quasi-category in $\mathcal{V}$ by this nerve functor.
\end{abstract}

\maketitle

\tableofcontents

\section{Introduction}\label{section: Introduction}

\subsection{The main goal}\label{parin}

The theory of $(\infty,1)$-categories (or simply $\infty$-categories) is by now well established, with notable models including simplicial categories \cite{bergner2007model}, Segal categories \cite{hirschowitz1998descente}, complete Segal spaces \cite{rezk2001model} and quasi-categories \cite{joyal2004notes}. These models were all shown to be homotopically equivalent by the work of \cite{bergner2007three}\cite{joyal2007quasisegal}\cite{joyal2007quasisimplicial}\cite{lurie2009higher}.

One may view $\infty$-categories as being ``weakly enriched in spaces'', that is between objects we have morphism spaces (usually formalised as simplicial sets) along with compositions that are only well-defined and associative up to coherent homotopy.
In analogy with ordinary enriched categories, one may thus conceive (weakly) enriched $\infty$-categories by replacing $\SSet$ by a suitable category $\mathcal{M}$ possessing some weak or higher structure. This is particularly easy starting from the “strict model” of simplicial categories, which one may replace by categories strictly enriched in a suitable monoidal model category $\mathcal{M}$, see \cite{berger2013homotopy}\cite{stanculescu2014constructing}\cite{muro2015Dwyer}. 

Further, enriched counterparts of some of the other classical models have been proposed in the literature.
In the case where $\mathcal{M}$ is cartesian (i.e. its monoidal structure is given by the cartesian product), Simpson introduced $\mathcal{M}$-enriched Segal categories \cite{simpson2012homotopy} as certain simplicial objects in $\mathcal{M}$. Building on this work and generalising Leinster’s homotopy monoids \cite{leinster2000homotopy}, Bacard defined $\mathcal{M}$-enriched Segal categories over a general monoidal model category $\mathcal{M}$ \cite{bacard2010Segal} in order to encompass enriching categories of interest like chain complexes over a commutative ring. Recently, an approach to complete dg-Segal spaces of a quite different flavour was put forward by Dimitriadis Bermejo \cite{bermejo2022dgSegal}, replacing the simplex category by a certain category of free dg-categories of finite type.

A general approach to enrichment which is formulated entirely within the framework of $\infty$-categories is due to Gepner and Haugseng. In \cite{gepner2015enriched} they developed a theory of $\infty$-categories weakly enriched in a monoidal $\infty$-category, based on Lurie's $\infty$-operads \cite{lurie2016higher}. In \cite{haugseng2015rectification}, their theory was shown to be presented by both the strict model and, in the cartesian case, by Simpson’s model.

As a particularly tangible model for $\infty$-categories, quasi-categories have proven very successful, and their theory has seen an extensive development due to the work of Joyal, Lurie and many others. The main goal of the present paper is to lay the basic foundations for a concrete model of “enriched quasi-categories” which stand to quasi-categories as Bacard’s enriched Segal categories stand to  Segal categories. While the development of the homotopy theory of these objects is relegated to subsequent work, our constructions are motivated by homotopy theoretic considerations as we further explain.

For a suitable monoidal category $\mathcal{V}$, we define \emph{quasi-categories in $\mathcal{V}$}. Here, like the category $\Set$ of sets, $\mathcal{V}$ is a category not necessarily having any weak or higher structure. Instead, the weak enrichment should be realised in the monoidal category $S\mathcal{V}$ of simplicial objects in $\mathcal{V}$, equipped with the right-transferred model structure from $\SSet$. This model structure exists for example if the monoidal unit $I$ is a projective generator of $\mathcal{V}$ \cite{quillen1967homotopical}. The restriction to the case $\mathcal{M} = S\mathcal{V}$ allows us to keep our model tangible and elementary.

Like in the classical situation, quasi-categories in $\mathcal{V}$ arise as a subclass of a larger category.
We denote this category by $\ts\mathcal{V}$ and call its objects \emph{templicial} (short for \emph{tensor-simplicial}) \emph{objects} in $\mathcal{V}$. It is important to note that while the hom-spaces are purported to be simplicial objects, templicial objects themselves are not. This change of perspective is necessary in order to make sense of basic constructions like the nerve, as we will explain in \S\ref{parfr}.
Nonetheless, when $\mathcal{V} = \Set$, both simplicial and templicial objects recover simplicial sets. Another case of particular interest is $\mathcal{V} = \Mod(k)$, the category of modules over a commutative ring $k$. Motivated by (non-commutative) algebraic geometry, where higher categorical structures like dg- and $A_{\infty}$-categories play prominent roles as models for spaces, we focus on this case in subsequent work \cite{lowen2023frobenius}. This is an update of \cite{lowen2020linear}, which was based on a weaker notion of enriched quasi-categories.

Classically, the Joyal model structure for quasi-categories on $\SSet$ \cite{joyal2004notes} and the model structure for simplicial categories by Bergner \cite{bergner2007model} are related by the homotopy coherent nerve functor, which is the right-adjoint in a Quillen equivalence. The construction of the homotopy coherent nerve goes back to Cordier \cite{cordier1982sur} and in fact it was already shown in \cite{cordier1986Vogt} (though not with this terminology) that it preserves fibrant objects. Indeed, given a locally Kan simplicial category (that is, all its hom-objects are Kan complexes), its homotopy coherent nerve is a quasi-category. Taking this fact as a starting point, our main result is the following.

\begin{Thm*}
There is a right-adjoint functor
$$
N^{hc}_{\mathcal{V}}: \mathcal{V}\Cat_{\Delta}\rightarrow \ts\mathcal{V}
$$
from the category of small $S\mathcal{V}$-enriched categories to the category of templicial objects in $\mathcal{V}$ with the following properties.
\begin{enumerate}[1.]
\item If $\mathcal{V} = \Set$, then $N^{hc}_{\mathcal{V}}$ recovers the classical homotopy coherent nerve.
\item If $\mathcal{C}$ is a small $S\mathcal{V}$-enriched category such that its underlying simplicial category is locally Kan, then $N^{hc}_{\mathcal{V}}(\mathcal{C})$ is a quasi-category in $\mathcal{V}$.
\end{enumerate}
\end{Thm*}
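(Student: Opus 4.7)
The plan is to first construct the left adjoint $\mathfrak{C}_{\mathcal{V}} \colon \ts\mathcal{V} \to \mathcal{V}\Cat_{\Delta}$ following the Dugger--Spivak necklace description, then obtain $N^{hc}_{\mathcal{V}}$ as its right adjoint, and finally establish the two listed properties. I would define $\mathfrak{C}_{\mathcal{V}}$ on the templicial representables $\simp[n]$ by building the free $S\mathcal{V}$-enriched category on the vertices $\{0, \ldots, n\}$ whose hom-objects between $i < j$ are the simplicial $\mathcal{V}$-objects freely generated, in an appropriate templicial sense, by the posets of necklaces from $i$ to $j$ inside $\simp[n]$. Since the templicial simplices form a dense subcategory of $\ts\mathcal{V}$ and $\mathcal{V}\Cat_{\Delta}$ is cocomplete, this determines $\mathfrak{C}_{\mathcal{V}}$ by left Kan extension, and its right adjoint $N^{hc}_{\mathcal{V}}$ is then characterised by $N^{hc}_{\mathcal{V}}(\mathcal{C})_n = \mathcal{V}\Cat_{\Delta}(\mathfrak{C}_{\mathcal{V}}[\simp[n]], \mathcal{C})$, equipped with colax comparison maps induced by necklace concatenation.

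For property (1), both $\mathfrak{C}_{\Set}$ and the classical Cordier categorification $\mathfrak{C}$ preserve colimits and agree on representables, by the Dugger--Spivak identification of $\mathfrak{C}[\Delta^n]$ as the free simplicial category on necklace posets. Combined with the equivalence $\ts\Set \simeq \SSet$ of monoidal categories established earlier in the paper, taking right adjoints on both sides yields the identification $N^{hc}_{\Set} = N^{hc}$.

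The substance of the theorem is property (2). Given an inner horn map $\Lambda^n_k \to N^{hc}_{\mathcal{V}}(\mathcal{C})$ with $0 < k < n$ (formulated templicially), the adjunction translates the lifting problem into that of extending an $S\mathcal{V}$-functor $F \colon \mathfrak{C}_{\mathcal{V}}[\Lambda^n_k] \to \mathcal{C}$ along the canonical inclusion $j \colon \mathfrak{C}_{\mathcal{V}}[\Lambda^n_k] \hookrightarrow \mathfrak{C}_{\mathcal{V}}[\simp[n]]$. This map is the identity on objects and an isomorphism on all hom-objects except between $0$ and $n$, where it sits as a monomorphism $H_{\Lambda} \hookrightarrow H_{\Delta}$ in $S\mathcal{V}$. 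Following Dugger--Spivak and Cordier--Vogt, I would filter this inclusion by attaching cells indexed by necklaces in $\simp[n]$ that contain the missing $k$-th face; classically each such attachment is a pushout along an inner horn inclusion $\Lambda^m_\ell \hookrightarrow \Delta^m$, which becomes a trivial cofibration in $S\mathcal{V}$ after applying the free functor $I \otimes (-)$. Solving the extension problem one cell at a time then reduces to lifting inner horns in the hom-simplicial-objects of $\mathcal{C}$, which is possible because they are Kan complexes by hypothesis.

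The main technical obstacle I anticipate is the interaction between this cellular filtration and the colax monoidal structure intrinsic to templicial objects. The hom-object $H_{\Delta}$ is not simply a classical simplicial set tensored with $I$, but is assembled from tensor powers $I^{\otimes p}$ reflecting the lengths of necklaces and glued via the templicial colax comparison maps. Verifying that the pushout cells in the filtration genuinely correspond to free generators of the form $I \otimes \Lambda^m_\ell \hookrightarrow I \otimes \Delta^m$ (rather than more intricate colax-monoidal gluings), and checking that the lifts assembled in $\mathcal{C}$ combine into a well-defined $S\mathcal{V}$-functor respecting composition, is where the essential work lies. Once these structural points are in place, the proof concludes by the Cordier--Vogt filler argument applied one hom-object at a time.
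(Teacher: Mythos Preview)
Your approach has a genuine structural gap that the paper is specifically designed to address. You assert that ``the templicial simplices form a dense subcategory of $\ts\mathcal{V}$'' and then left-Kan-extend from them. But this density fails in general: Example~\ref{example: free non-cell-cofibrant temp. obj.} exhibits a templicial abelian group $X$ possessing a $2$-simplex $w$ whose image under $\mu_{1,1}$ is not a pure tensor, so $w$ is not hit by any templicial morphism $\tilde{F}(\Delta^{2})\to X$. This is precisely the ``representation problem'' flagged after Remark~\ref{remark: simplices of underlying simp. set}. Consequently your construction of $\mathfrak{C}_{\mathcal{V}}$ by left Kan extension along $\tilde{F}(\Delta^{\bullet})$ does not determine a functor on all of $\ts\mathcal{V}$, and the adjoint $N^{hc}_{\mathcal{V}}$ you would obtain need not agree with the paper's.

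The same issue undermines your treatment of property~(2). You translate the horn-filling problem via the adjunction $\mathfrak{C}_{\mathcal{V}}\dashv N^{hc}_{\mathcal{V}}$ into an extension problem in $\mathcal{V}\Cat_{\Delta}$, but the paper's Definition~\ref{definition: necklace functor lifts inner horns} of a quasi-category in $\mathcal{V}$ is \emph{not} a lifting condition in $\ts\mathcal{V}$: it is a lifting condition in $\mathcal{V}^{\nec^{op}}$, asked of each hom-functor $X_{\bullet}(a,b)$ separately. The paper's route to Corollary~\ref{corollary: hc-nerve preserves fibrant objects} therefore factors everything through the coreflective embedding $\ts\mathcal{V}\hookrightarrow\mathcal{V}\Cat_{\nec}$ of Theorem~\ref{theorem: nec functor fully faithful left-adjoint}: one first builds $\mathfrak{s}\dashv\mathfrak{n}$ at the level of $\mathcal{V}^{\nec^{op}}$ and $S\mathcal{V}$ via weighted colimits (Construction~\ref{construction: ajunction between simplicial and necklace objects}), upgrades to enriched categories, and then defines $N^{hc}_{\mathcal{V}} = (-)^{temp}\circ\mathfrak{n}$. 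The proof of property~(2) then combines Lemma~\ref{lemma: necklace temp. adjunction counit fibration} (unique lifting along the counit $\epsilon$) with the $\mathfrak{s}\dashv\mathfrak{n}$ adjunction to reduce to the classical anodyne map $\mathfrak{C}[\Lambda^{n}_{j}](0,n)\hookrightarrow\mathfrak{C}[\Delta^{n}](0,n)$ in $\SSet$. Your Cordier--Vogt filtration idea is morally the same endgame, but without the necklace-category intermediary it is not clear how to even state the extension problem correctly, let alone verify the compatibility you rightly flag as the ``main technical obstacle''.
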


We call this functor the \emph{templicial homotopy coherent nerve}. It is constructed in \S\ref{subsection: The templicial homotopy coherent nerve} and the theorem is proven in Corollary \ref{corollary: hc-nerve preserves fibrant objects}. Some other enriched versions of the homotopy coherent nerve exist in the cartesian context \cite{gindi2021coherent}\cite{moser2022homotopy}. We will investigate the relation of the latter nerve with ours in subsequent work \cite{mertens2023nerves}.

The model structure on $S\mathcal{V}$-enriched categories, which exists by \cite{berger2013homotopy}\cite{stanculescu2014constructing} \cite{muro2015Dwyer}, generalises the one of Bergner on simplicial categories. The authors expect that a generalisation of Joyal's model structure on $\SSet$ exists for $\ts\mathcal{V}$ (under suitable conditions on $\mathcal{V}$), having quasi-categories in $\mathcal{V}$ as fibrant objects and making the templicial homotopy coherent nerve into a Quillen equivalence. The weak equivalences are likely reflected by the left-adjoint, which we call the \emph{categorification functor}, for instance in the case of left transfer. This is work in progress. Note that by \cite{haugseng2015rectification}, this would establish quasi-categories in $\mathcal{V}$ as a model for $\infty$-categories enriched over $S\mathcal{V}$ in the sense of \cite{gepner2015enriched}.

\subsection{Templicial objects and necklace categories} \label{parfr}

In order to define quasi-categories in a monoidal category $\mathcal{V}$ and prove the Theorem from \ref{parin}, two larger categories play an important role: the category $\ts\mathcal{V}$ of templicial objects and the category $\mathcal{V}\Cat_{\nec}$ of necklace categories. In this section, we will explain and motivate their occurrence, starting with the former. 

Let $\mathcal{C}$ be a category enriched over the (possibly non-cartesian) monoidal category $\mathcal{V}$. When defining the nerve of $\mathcal{C}$, a natural first attempt is to put
$$
N(\mathcal{C})_{n} = \coprod_{A_{0},...,A_{n}\in \Ob(\mathcal{C})}\mathcal{C}(A_{0},A_{1})\otimes ...\otimes \mathcal{C}(A_{n-1},A_{n})\in \mathcal{V}
$$
and try to make this into a simplicial object in $\mathcal{V}$. It is readily seen that we can define inner face morphisms and degeneracy morphisms in the usual way. However, the same is not true for the outer face morphisms because in general there are no projections out of a tensor product, whence we cannot “project away” the factor $\mathcal{C}(A_{0},A_{1})$, respectively $\mathcal{C}(A_{n-1},A_{n})$, in the higher expression. As a consequence we do not obtain a simplicial object, but the higher data can be organised into a colax monoidal functor
$$
X: \fint^{op}\rightarrow \mathcal{V}
$$
where $\fint$ is the monoidal category of finite intervals (see \S\ref{subsection: Notations and conventions}). Restricting from the usual simplex category $\simp$ to $\fint$, it follows that $X$ no longer has any outer face maps, a loss which is compensated by the colax monoidal structure. It was shown by Leinster in \cite{leinster2000homotopy} (see Proposition \ref{proposition: colax functors in cartesian monoidal cat.}) that if $\mathcal{V}$ is cartesian, $X$ may still be identified with a simplicial object in $\mathcal{V}$.

The philosophy of introducing coalgebraic structure in the non-cartesian context is not uncommon. For example, Hopf algebras may be considered the group objects internal to $\Mod(k)$. Similarly, in their PhD thesis \cite{aguiar1997internal}, Aguiar introduced graphs and categories internal to a monoidal category by means of bicomodules over comonoids. Such structure is invisible in a cartesian monoidal category because then every object has a unique comonoid structure.
The same philosophy was applied by Bacard in their definition of $\mathcal{M}$-enriched Segal categories \cite{bacard2010Segal}. These are many object versions of Leinster's homotopy monoids, based on the colax monoidal functors above \cite{leinster2000homotopy}.

Let us describe templicial objects in a little more detail. In a similar but non-equivalent way to \cite{bacard2010Segal}, we define templicial objects as certain colax monoidal functors on $\fint$ with a discrete set of vertices. More precisely, a templicial object in $\mathcal{V}$ with vertex set $S$ is a strongly unital, colax monoidal functor
$$
X: \fint^{op}\rightarrow \mathcal{V}\Quiv_{S}
$$
where $\mathcal{V}\Quiv_{S}$ denotes the category of $\mathcal{V}$-enriched quivers with vertex set $S$. The colax monoidal structure equips $X$ with quiver morphisms $\mu_{k,l}: X_{k+l}\rightarrow X_{k}\otimes_{S} X_{l}$ for all $k,l\geq 0$. For example $\mu_{1,2}$ may be pictured as

\begin{center}
\begin{tikzpicture}
\filldraw
(0,-0.289) circle (1pt)
(0.75,0.577) circle (1pt)
(1.5,-0.289) circle (1pt)
(0.75,-0.7) circle (1pt);
\draw[-latex]
(0,-0.289) -- (0.75,0.577);
\draw[-latex]
(0.75,0.577) -- (1.5,-0.289);
\draw[-latex]
(1.5,-0.289) -- (0.75,-0.7);
\draw[-latex]
(0.75,0.577) -- (0.75,-0.7);
\draw[-latex]
(0,-0.289) -- (0.75,-0.7);
\draw[-latex, dotted]
(0,-0.289) -- (1.5,-0.289);
\fill[fill=gray,opacity=0.4]
(1.5,-0.289) -- (0.75,0.577) -- (0.75,-0.7);
\fill[fill=gray,opacity=0.2]
(0,-0.289) -- (0.75,0.577) -- (0.75,-0.7);

\draw
(3,0) node{$\underset{\mu_{1,2}}{\longmapsto}$};

\filldraw
(4.5,-0.289) circle (1pt)
(5.25,0.577) circle (1pt)
(6,-0.289) circle (1pt)
(5.25,-0.7) circle (1pt);
\draw[-latex]
(4.5,-0.289) -- (5.25,0.577);
\draw[-latex]
(5.25,0.577) -- (6,-0.289);
\draw[-latex]
(6,-0.289) -- (5.25,-0.7);
\draw[-latex]
(5.25,0.577) -- (5.25,-0.7);
\fill[fill=gray,opacity=0.4]
(6,-0.289) -- (5.25,0.577) -- (5.25,-0.7);
\end{tikzpicture}
\end{center}
Intuitively, $\mu_{k,l}$ involves pulling apart $(k+l)$-simplices into $k$-simplices attached to $l$-simplices at a vertex. We can thus no longer access outer faces of a simplex directly. The shift of focus to faces joint at a vertex naturally leads us to considering necklaces (see Section \ref{section: Necklaces and necklace categories}).

Necklaces were introduced by Baues \cite{baues1980geometry} (under a different name) and popularised by Dugger and Spivak in their description of the categorification functor \cite{dugger2011rigidification}. Roughly, a necklace is a sequence of simplices glued at the end points.\\

\begin{center}
\begin{tikzpicture}
\filldraw
+(0,-0.289) circle (1pt)
+(0.75,0.577) circle (1pt)
+(1.5,-0.289) circle (1pt)
+(0.75,-0.7) circle (1pt);
\draw[-latex]
+(0,-0.289) -- +(0.75,0.577);
\draw[-latex]
+(0.75,0.577) -- +(1.5,-0.289);
\draw[-latex]
+(0.75,-0.7) -- +(1.5,-0.289);
\draw[-latex]
+(0.75,+0.577) -- +(0.75,-0.7);
\draw[-latex]
+(0,-0.289) -- +(0.75,-0.7);
\draw[dotted,-latex]
+(0,-0.289) -- +(1.5,-0.289);
\fill[fill=gray,opacity=0.25]
+(0,-0.289) -- +(0.75,0.577) -- +(0.75,-0.7);
\fill[fill=gray,opacity=0.4]
+(1.5,-0.289) -- +(0.75,0.577) -- +(0.75,-0.7)
++(1.5,-0.289) node(end){};

\filldraw
(end)
+(0,0) circle (1pt)
+(1,0) circle (1pt);
\draw[-latex]
(end)
+(0,0) -- +(1,0);
\draw
(end)
++(1,+0.289) node (end){};

\draw
(end)
+(0,-1.3) node{$\Delta^{3}\vee \Delta^{1}\vee \Delta^{2}\vee \Delta^{3}$};

\filldraw
(end)
+(0,-0.289) circle (1pt)
+(0.5,0.577) circle (1pt)
+(1,-0.289) circle (1pt);
\draw[-latex]
(end)
+(0,-0.289) -- +(0.5,0.577);
\draw[-latex]
(end)
+(0.5,0.577) -- +(1,-0.289);
\draw[-latex]
(end)
+(0,-0.289) -- +(1,-0.289);
\fill[fill=gray,opacity=0.4]
(end)
+(0,-0.289) -- +(0.5,0.577) -- +(1,-0.289)
++(1,0) node(end){};

\filldraw
(end)
+(0,-0.289) circle (1pt)
+(0.75,0.577) circle (1pt)
+(1.5,-0.289) circle (1pt)
+(0.75,-0.7) circle (1pt);
\draw[-latex]
(end)
+(0,-0.289) -- +(0.75,0.577);
\draw[-latex]
(end)
+(0.75,0.577) -- +(1.5,-0.289);
\draw[-latex]
(end)
+(0.75,-0.7) -- +(1.5,-0.289);
\draw[-latex]
(end)
+(0.75,+0.577) -- +(0.75,-0.7);
\draw[-latex]
(end)
+(0,-0.289) -- +(0.75,-0.7);
\draw[dotted,-latex]
(end)
+(0,-0.289) -- +(1.5,-0.289);
\fill[fill=gray,opacity=0.25]
(end)
+(0,-0.289) -- +(0.75,0.577) -- +(0.75,-0.7);
\fill[fill=gray,opacity=0.4]
(end)
+(1.5,-0.289) -- +(0.75,0.577) -- +(0.75,-0.7);
\end{tikzpicture}
\end{center}
Necklaces naturally occur in the interpretation of the comultiplications, with $\mu_{1,2}$ being parametrised by the necklace map $$\nu_{1,2}: \Delta^{1}\vee \Delta^{2}\rightarrow \Delta^{3}$$ (see Notation \ref{notation: generating necklace maps}). As such, they allow us to turn colax monoidal functors on $\fint$ into ordinary functors on the category $\nec$ of necklaces, putting $X(\Delta^{1}\vee \Delta^{2}) = X_{1}\otimes_{S} X_{2}$ and $X(\nu_{1,2}) = \mu_{1,2}$ in the higher example.
Endowing the functor category $\mathcal{V}^{\nec^{op}}$ with the Day convolution, we define a necklace category to be a category enriched in $\mathcal{V}^{\nec^{op}}$. This allows us to realise $\ts\mathcal{V}$ as a coreflective subcategory of the category $\mathcal{V}\Cat_{\nec}$ of necklace categories:
\begin{equation}\label{diagram: coreflective embedding of temp. objs. into necklace cats.}
\ts\mathcal{V}\hookrightarrow \mathcal{V}\Cat_{\nec}.
\end{equation}
This embedding will turn up as a crucial intermediate step in defining the templicial homotopy coherent nerve in Section \ref{section: Enriching the homotopy coherent nerve}.
In the definition of quasi-categories in $\mathcal{V}$ in Section \ref{section: Quasi-categories in a monoidal category}, the category $\mathcal{V}^{\nec^{op}}$ also plays a fundamental role as the context in which we express the familiar lifting property with respect to inner horn inclusions.

\subsection{Overview of the paper}
Next, we give an overview of the contents of the paper. In Section \ref{section: Templicial objects} we formally introduce templicial objects and prove some basic properties. Starting in \S\ref{subsection: Simplicial versus templicial objects}, we compare them to simplicial sets. In \S\ref{subsection: The templicial nerve} we construct the templicial analogue of the classical nerve functor for small $\mathcal{V}$-enriched categories. For templicial objects which have non-degenerate simplices in an appropriate sense, in \S\ref{subsection: Eilenberg-Zilber}, we prove a version of the Eilenberg-Zilber lemma. In general, the structure of a templicial object $X$ is considerably richer than that of the underlying simplicial set $\tilde{U}(X)$ (see Proposition \ref{proposition: free-forget adjunction} and Remark \ref{remark: simplices of underlying simp. set}). In particular, unlike in the classical case, simplices are no longer represented by morphisms from standard simplices (the “representation problem”, see Example \ref{example: free non-cell-cofibrant temp. obj.}).

This representation problem is solved in Section \ref{section: Necklaces and necklace categories} with the introduction of necklace categories. In \S\ref{subsection: Necklaces} we recall necklaces and give a combinatorial characterisation of their category $\nec$. In \S\ref{subsection: Necklace categories} we define necklace categories and realise the category of templicial objects as a coreflective subcategory of the category $\mathcal{V}\Cat_{\nec}$ (Theorem \ref{theorem: nec functor fully faithful left-adjoint}). Finally, in \S\ref{subsection: Some constructions revisited} we observe that both the underlying simplicial set functor $\tilde{U}$ and the templicial nerve $N_{\mathcal{V}}$ naturally factor through $\mathcal{V}\Cat_{\nec}$.

In Section \ref{section: Enriching the homotopy coherent nerve}, we generalise the classical homotopy coherent nerve and the categorification functor (Definition \ref{definition: temp. hc-nerve}). We follow the elegant approach from \cite{dugger2011rigidification}, which we recall in \S\ref{subsection: The classical homotopy coherent nerve} before presenting our enriched counterpart in \S\ref{subsection: The templicial homotopy coherent nerve}. The key observation relating the two is a description of the categorification by means of a weighted colimit (Proposition \ref{proposition: categorification as weighted colimit}).

Starting from the embedding \eqref{diagram: coreflective embedding of temp. objs. into necklace cats.}, in order to construct the categorification, we construct a functor from necklace categories to $S\mathcal{V}$-enriched categories. Following \cite{dugger2011rigidification}, the categorification is simplified using flanked flags in \S\ref{subsection: Simplification of the categorification functor}, in the presence of the non-degenerate simplices from \S\ref{subsection: Eilenberg-Zilber}. Finally in \S\ref{subsection: Comparison with the templicial nerve} we show that the templicial homotopy coherent nerve reduces to the templicial nerve in the desired way. As usual, for a templicial object $X$, we naturally obtain a $\mathcal{V}$-enriched homotopy category $h_{\mathcal{V}}X$ as $\pi_{0}$ of the categorification. In general, the underlying simplicial set and underlying category functors do not commute with taking homotopy categories, as shown in Example \ref{example: homotopy and forgetful don't commute}.

In Section \ref{section: Quasi-categories in a monoidal category}, we introduce the natural analogue of quasi-categories in the templicial setting, which will remedy the aforementioned failure to commute. A quasi-category in $\mathcal{V}$ is defined as a templicial object satisfying a familiar lifting property with respect to inner horn inclusions (Definition \ref{definition: necklace functor lifts inner horns}). In contrast to the classical setup, this lifting property is considered in the category $\mathcal{V}^{\nec^{op}}$ rather than $\ts\mathcal{V}$ because of the representation problem. The resulting notion is in general strictly stronger than requiring $\tilde{U}(X)$ to be a quasi-category. Nonetheless, when $\mathcal{V} = \Set$, we still recover ordinary quasi-categories. We also show our main result (Corollary \ref{corollary: hc-nerve preserves fibrant objects}, see Theorem above). Finally, in \S\ref{subsection: Simplification of the homotopy category}, we show how the description of the homotopy category $h_{\mathcal{V}}X$ can be simplified when $X$ is a quasi-category in $\mathcal{V}$. Moreover, the underlying category of the homotopy category corresponds to the homotopy category of the underlying ordinary quasi-category.

\subsection{Notations and conventions}\label{subsection: Notations and conventions}
\begin{enumerate}[1.]
\item Throughout the text, we let $(\mathcal{V},\otimes,I)$ be a fixed bicomplete, symmetric monoidal closed category (i.e. a B\'enabou cosmos in the sense of \cite{street1974elementary}). Up to natural isomorphism, there is a unique colimit preserving functor $F: \Set\rightarrow \mathcal{V}$ such that $F(\{*\}) = I$. This functor is left-adjoint to the forgetful functor $U = \mathcal{V}(I,-): \mathcal{V}\rightarrow \Set$. Endowing $\Set$ with the cartesian monoidal structure, $F$ is strong monoidal and $U$ is lax monoidal. These notations will remain fixed as well.

\item Let $(\mathcal{W},\otimes,I)$ be an arbitrary monoidal category. Given a set $S$, we refer to a collection $Q = (Q(a,b))_{a,b\in S}$ with $Q(a,b)\in \mathcal{W}$ as a \emph{$\mathcal{W}$-enriched quiver} with $S$ its set of \emph{vertices}. A \emph{quiver morphism} $f: Q\rightarrow P$ is a collection $(f_{a,b})_{a,b\in S}$ of morphisms $f_{a,b}: Q(a,b)\rightarrow P(a,b)$ in $\mathcal{W}$. $\mathcal{W}$-enriched quivers with a fixed set of vertices $S$ and morphisms between them form a category which we denote by
$$
\mathcal{W}\Quiv_{S}
$$
This category is monoidal with product $\otimes_{S}$ and unit $I_{S}$ defined by
$$
(Q\otimes_{S} P)(a,b) = \coprod_{c\in S}Q(a,c)\otimes P(c,b)\quad \text{and}\quad I_{S}(a,b) =
\begin{cases}
I & \text{if }a = b\\
0 & \text{if }a\neq b
\end{cases}
$$
for all $Q,P\in \mathcal{W}\Quiv_{S}$ and $a,b\in S$.

\item Let $f: S\rightarrow T$ be a map of sets. We have an induced lax monoidal functor $f^{*}: \mathcal{W}\Quiv_{T}\rightarrow \mathcal{W}\Quiv_{S}$ given by $f^{*}(Q)(a,b) = Q(f(a),f(b))$ for all $\mathcal{W}$-enriched quivers $Q$ and $a,b\in S$. The functor $f^{*}$ has a left-adjoint which we denote by $f_{!}: \mathcal{W}\Quiv_{S}\rightarrow \mathcal{W}\Quiv_{T}$. It is given by
$$
f_{!}(Q)(x,y) = \coprod_{\substack{a,b\in S\\ f(a) = x, f(b) = y}}\!\!\!\!\!Q(a,b)
$$
for all $Q\in \mathcal{W}\Quiv_{S}$ and $x,y\in T$. As $f^{*}$ is canonically lax monoidal, $f_{!}$ comes equipped with an induced colax monoidal structure.

\item To relate $\mathcal{V}$-enriched and $S\mathcal{V}$-enriched categories to templicial objects (see \S\ref{subsection: The templicial nerve}, \S\ref{subsection: The templicial homotopy coherent nerve}), it will be more convenient for us to consider a $\mathcal{W}$-enriched category (or $\mathcal{W}$-category for short) as a pair $(\mathcal{C},\Ob(\mathcal{C}))$ with $\Ob(\mathcal{C})$ its set of objects and $\mathcal{C}$ a monoid in $\mathcal{W}\Quiv_{\Ob(\mathcal{C})}$. Note that this convention implies that the composition in $\mathcal{C}$ is given by a collection of morphisms in $\mathcal{W}$, for all $A,B,C\in \Ob(\mathcal{C})$:
$$
m_{\mathcal{C}}: \mathcal{C}(A,B)\otimes \mathcal{C}(B,C)\rightarrow \mathcal{C}(A,C)
$$
as opposed to the more conventional $\mathcal{C}(B,C)\otimes \mathcal{C}(A,B)\rightarrow \mathcal{C}(A,C)$. A $\mathcal{W}$-functor $\mathcal{C}\rightarrow \mathcal{D}$ is then a pair $(H,f)$ with $f: \Ob(\mathcal{C})\rightarrow \Ob(\mathcal{D})$ a map of sets and $H: \mathcal{C}\rightarrow f^{*}(\mathcal{D})$ a morphism of monoids in $\mathcal{W}\Quiv_{\Ob(\mathcal{C})}$, where we used the lax structure of $f^{*}$. We denote the category of small $\mathcal{W}$-categories and $\mathcal{W}$-functors between them by
$$
\mathcal{W}\Cat
$$

\item We will make use of the simplex categories $\simp_{surj}\subseteq \fint\subseteq \simp$, where:
\begin{itemize} 
\item $\simp$ is the ordinary \emph{simplex category}. Its objects are the posets $[n] = \{0,...,n\}$ with $n\geq 0$, and its morphisms are the order morphisms $[m]\rightarrow [n]$.
\item $\fint$ is the category of \emph{finite intervals}, which is the subcategory of $\simp$ consisting of all morphisms $f: [m]\rightarrow [n]$ that preserve the endpoints, that is $f(0) = 0$ and $f(m) = n$.
\item $\simp_{surj}$ is the subcategory of $\simp$ of all surjective morphisms $[m]\twoheadrightarrow [n]$.
\end{itemize}
Unlike $\simp$, $\fint$ and $\simp_{surj}$ carry a monoidal structure $(+,[0])$ which is given by identifying respective top and bottom endpoints, as follows. 
For all $m,n\geq 0$:
$$
[m] + [n] = [m+n]
$$
For morphisms $f: [m]\rightarrow [m']$ and $g: [n]\rightarrow [n']$ in $\fint$ or $\simp_{surj}$:
$$
(f + g)(i) =
\begin{cases}
f(i) & \text{if }i\leq m\\
m' +g(i - m) & \text{if }i\geq m
\end{cases}
$$
Note that for any morphism $f: [m]\rightarrow [n]$ in $\fint$ and $k,l\geq 0$ such that $k + l = m$, there exist unique morphisms $f_{1}: [k]\rightarrow [p]$ and $f_{2}: [l]\rightarrow [q]$ in $\fint$ such that $f_{1} + f_{2} = f$.

There is a well-known monoidal equivalence between $\fint^{op}$ and the augmented simplex category $\asimp$ (equipped with the join as monoidal product), see \cite{joyal1997disks}.
\end{enumerate}

\vspace{0,3cm}
\noindent \emph{Acknowledgement.}
The first named author would like to thank Boris Shoikhet for introducing her to Leinster's homotopy monoids and for pointing out \cite{bacard2010Segal}. The second named author is thankful to Clemens Berger for pointing out \cite{baues1980geometry} and to Lander Hermans for \cite{aguiar1997internal} as well as interesting discussions on the subject, and for valuable feedback on the introduction of the present paper. Both authors are grateful to Rune Haugseng, Bernhard Keller, Dmitry Kaledin, Tom Leinster, Michel Van den Bergh and Ittay Weiss for interesting comments and questions on the project.

\section{Templicial objects}\label{section: Templicial objects}

In \cite{aguiar1997internal}, Aguiar defined a graph internal to a monoidal category $\mathcal{W}$ as a pair $(G_{1},G_{0})$ where $G_{0}$ is a comonoid in $\mathcal{W}$ and $G_{1}$ is a bicomodule over $G_{0}$. When $\mathcal{W}$ is cartesian monoidal, then this recovers the usual notion of a graph internal to a category, namely a pair of morphisms $s,t: G_{1}\rightrightarrows G_{0}$ expressing the source and target. Extending this philosophy to higher dimensions, we propose to define a \emph{simplicial object internal to a monoidal category} $\mathcal{W}$ as a colax monoidal functor
$$
X: \fint^{op}\rightarrow \mathcal{W}
$$
where $\fint$ is the monoidal category of finite intervals (see \S\ref{subsection: Notations and conventions}). The restriction to $\fint$ precisely gets rid of the outer face maps, which are replaced by the colax monoidal structure. To justify this change, let us remark that the colax structure of $X$ provides $X_{0}\in \mathcal{W}$ with the structure of a comonoid in $\mathcal{W}$ and $X_{1}$ with that of bicomodule over $X_{0}$. In other words, $(X_{1},X_{0})$ is a graph internal to $\mathcal{W}$ in the sense of \cite{aguiar1997internal}. Moreover, it was shown by Leinster \cite{leinster2000homotopy} (reappearing here as Proposition \ref{proposition: colax functors in cartesian monoidal cat.}) that if $\mathcal{W}$ is cartesian, $X$ may be identified with a simplicial object in $\mathcal{W}$.

The need for this change in perspective is highlighted when we try to construct a nerve of an enriched category. Given a small ordinary category $\mathcal{C}$, recall that its nerve $N(\mathcal{C})$ is the simplicial set whose set of $n$-simplices is given by
$$
N(\mathcal{C})_{n} = \coprod_{A_{0},...,A_{n}\in \Ob(\mathcal{C})}\mathcal{C}(A_{0},A_{1})\times ...\times \mathcal{C}(A_{n-1},A_{n})
$$
The inner face maps $d_{j}$ for $0 < j < n$ are given by composing two consecutive morphisms in a sequence, and the degeneracy maps $s_{i}$ for $0\leq i\leq n$ are given by inserting an identity in the sequence. The outer face maps $d_{0}$ and $d_{n}$ are defined by deleting respectively the first and last entry in a sequence. Suppose now that $\mathcal{C}$ is enriched in a (possibly non-cartesian) monoidal category $(\mathcal{W},\otimes,I)$. The natural analogous way to define its nerve would be:
$$
N(\mathcal{C})_{n} = \coprod_{A_{0},...,A_{n}\in \Ob(\mathcal{C})}\mathcal{C}(A_{0},A_{1})\otimes ...\otimes \mathcal{C}(A_{n-1},A_{n})\in \mathcal{W}
$$
Then we can define inner face morphisms and degeneracy morphisms in the same way as above. However, we cannot define the outer face morphisms because in general there are no projections on the factors of a tensor product $V \otimes W$ in $\mathcal{W}$. Hence, we do not obtain a simplicial object in $\mathcal{W}$, but the higher data can be organised into a colax monoidal functor $\fint^{op}\rightarrow \mathcal{W}$.

Because enriched categories have a set of objects, we also equip such colax monoidal functors with a discrete set of vertices. Formally, we achieve this by putting $\mathcal{W} = \mathcal{V}\Quiv_{S}$ for a set $S$ (see \S\ref{subsection: Notations and conventions}) and imposing the functor is strongly unital. This leads to the definition of our main objects of study, templicial objects (Definition \ref{definition: temp. obj.}). We then define the natural analogue of the classical nerve functor in this context (Definition \ref{definition: templicial nerve}), taking $\mathcal{V}$-categories to templicial objects in $\mathcal{V}$. Finally, we show a generalisation of the Eilenberg-Zilber lemma (Lemma \ref{lemma: temp. Eilenberg-Zilber}).

\subsection{Simplicial versus templicial objects}\label{subsection: Simplicial versus templicial objects}

Let us make explicit what data are contained in a colax monoidal functor $X: \fint^{op}\rightarrow \mathcal{W}$ for a monoidal category $\mathcal{W}$ and compare it to the data of a simplicial object. For general background on (co)lax monoidal functors, we refer to the relevant literature e.g. \cite{aguiar2010monoidal}. Explicitly, such a functor $X$ consists of a collection of objects $X_{0}, X_{1}, X_{2}, ...$ along with
\begin{itemize}
\item inner face morphisms $d^{X}_{j}: X_{n}\rightarrow X_{n-1}$ for all $0 < j < n$,
\item degeneracy morphisms $s^{X}_{i}: X_{n}\rightarrow X_{n+1}$ for all $0\leq i\leq n$,
\item comultiplication morphisms $\mu^{X}_{k,l}: X_{k+l}\rightarrow X_{k}\otimes X_{l}$ for all $k,l\geq 0$,
\item a counit morphism $\epsilon^{X}: X_{0}\rightarrow I$.
\end{itemize}
These data moreover must satisfy:
\begin{itemize}
\item (Simplicial identities) For all $i,j\geq 0$, whenever these equations are well-defined,
$$
\begin{gathered}
d^{X}_{i}s^{X}_{j} = \begin{cases}
s^{X}_{j-1}d^{X}_{i} & \text{if }i < j\\
\id & \text{if } i = j\text{ or }i = j+1\\
s^{X}_{j}d^{X}_{i-1} & \text{if }i > j+1
\end{cases}\\
d^{X}_{i}d^{X}_{j} = d^{X}_{j-1}d^{X}_{i}\text{ if }i < j\qquad s^{X}_{i}s^{X}_{j} = s^{X}_{j}s^{X}_{i-1}\text{ if }i > j
\end{gathered}
$$
\item (Naturality of $\mu^{X}$) For all $k,l\geq 0$ and $0 < j < k+l+1$, $0\leq i\leq k+l-1$,
$$
\begin{aligned}
\mu^{X}_{k,l}d^{X}_{j} &=
\begin{cases}
(d^{X}_{j}\otimes \id_{X_{l}})\mu^{X}_{k+1,l} & \text{if }j\leq k\\
(\id_{X_{k}}\otimes d^{X}_{j-k})\mu^{X}_{k,l+1} & \text{if }j > k
\end{cases}\\
\mu^{X}_{k,l}s^{X}_{i} &=
\begin{cases}
(s^{X}_{i}\otimes \id_{X_{l}})\mu^{X}_{k-1,l} & \text{if }i < k\\
(\id_{X_{k}}\otimes s^{X}_{i-k})\mu^{X}_{k,l-1} & \text{if }i\geq k
\end{cases}
\end{aligned}
$$
\item (Coassociativity of $\mu^{X}$) For all $r,s,t\geq 0$, we have
$$
(\id_{X_{r}}\otimes \mu^{X}_{s,t})\mu^{X}_{r,s+t} = (\mu^{X}_{r,s}\otimes \id_{X_{t}})\mu^{X}_{r+s,t}
$$
\item (Counitality of $\mu^{X}$ with $\epsilon^{X}$) For all $n\geq 0$, we have
$$
(\id_{X_{n}}\otimes \epsilon^{X})\mu^{X}_{n,0} = \id_{X_{n}} = (\epsilon^{X}\otimes \id_{X_{n}})\mu^{X}_{0,n}
$$
\end{itemize}
Note that by the coassociativity, we have a well-defined morphism
$$
\mu^{X}_{k_{1},...,k_{n}}: X_{k_{1}+...+k_{n}}\rightarrow X_{k_{1}}\otimes ... \otimes X_{k_{n}}
$$
for all $n\geq 2$ and $k_{1},...,k_{n}\geq 0$. Further, we will set $\mu^{X}_{k_{1},...,k_{n}}$ to be the identity on $X_{k_{1}}$ if $n = 1$, and to be the counit $\epsilon^{X}$ if $n = 0$.

Moreover, under these identifications, a monoidal natural transformation $\alpha: X\rightarrow Y$ between colax monoidal functors $X,Y: \fint^{op}\rightarrow \mathcal{W}$ is equivalent to a collection of morphisms $\left(\alpha_{n}: X_{n}\rightarrow Y_{n}\right)_{n\geq 0}$ which satisfy:
\begin{itemize}
\item (Naturality of $\alpha$) For all $0 < j < n$ and $0\leq i\leq n$, we have
$$
\alpha_{n-1}d^{X}_{j} = d^{Y}_{j}\alpha_{n}\quad \text{and}\quad \alpha_{n+1}s^{X}_{i} = s^{Y}_{i}\alpha_{n}
$$
\item (Monoidality of $\alpha$) For all $k,l\geq 0$, we have
$$
\mu^{Y}_{k,l}\alpha_{k+l} = (\alpha_{k}\otimes \alpha_{l})\mu^{X}_{k,l}\quad \text{and}\quad \epsilon^{Y}\alpha_{0} = \epsilon^{X}
$$
\end{itemize}
Often we will drop the superscript $X$ when it is clear from context.

We denote by $\Colax(\fint^{op},\mathcal{W})$ the category of colax monoidal functors $\fint^{op}\rightarrow \mathcal{W}$ and monoidal natural transformations between them.

\begin{Prop}[\cite{leinster2000homotopy}, Proposition 3.1.7] \label{proposition: colax functors in cartesian monoidal cat.}
Let $\mathcal{W}$ be a cartesian monoidal category. There is an isomorphism of categories
$$
\Colax(\fint^{op}, \mathcal{W}) \simeq S\mathcal{W}.
$$
\end{Prop}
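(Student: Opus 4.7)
The plan is to exhibit mutually inverse functors between $S\mathcal{W}$ and $\Colax(\fint^{op},\mathcal{W})$. The guiding observation is that in a cartesian monoidal category every object carries a unique comonoid structure and every morphism is automatically a comonoid morphism, so the only extra data a colax monoidal functor on $\fint^{op}$ can encode beyond an ordinary functor on $\fint^{op}$ is a coherent family of ``splitting'' maps $X_{k+l}\to X_k\times X_l$, and these are exactly what a simplicial object supplies via its outer face maps.

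First I would construct $\Phi: S\mathcal{W}\to \Colax(\fint^{op},\mathcal{W})$ by restriction along $\fint^{op}\hookrightarrow \simp^{op}$. For a simplicial object $X$, the comultiplication is defined by the universal property of the product as the pairing
$$
\mu_{k,l} \;=\; \langle X(\iota^L_{k,l}),\, X(\iota^R_{k,l})\rangle \colon X_{k+l}\longrightarrow X_k\times X_l,
$$
where $\iota^L_{k,l}\colon [k]\hookrightarrow [k+l]$, $i\mapsto i$, and $\iota^R_{k,l}\colon [l]\hookrightarrow [k+l]$, $i\mapsto i+k$, are precisely the injections in $\simp$ that fail to lie in $\fint$; the counit $\epsilon\colon X_0\to 1$ is the unique such map. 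Coassociativity, counitality and naturality with respect to morphisms of $\fint^{op}$ then follow immediately from the functoriality of $X$ on $\simp^{op}$ together with the uniqueness of maps into a product.

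Conversely, I would define $\Psi\colon \Colax(\fint^{op},\mathcal{W})\to S\mathcal{W}$ by extending each $X$ with outer face maps
$$
d_0 \;:=\; \operatorname{pr}_2\circ \mu_{1,n-1} \colon X_n\to X_{n-1}, \qquad d_n \;:=\; \operatorname{pr}_1\circ \mu_{n-1,1}\colon X_n\to X_{n-1}.
$$
The main labour lies in verifying all simplicial identities involving $d_0$ or $d_n$. For example, $d_0 d_0 = d_0 d_1$ is obtained by writing both sides, via coassociativity, as $\operatorname{pr}_3\circ \mu_{1,1,n-2}$ (using the naturality formula $\mu_{1,n-2}d_1 = (d_1\times \id)\mu_{2,n-2}$ on the right-hand side and the decomposition $\mu_{1,1,n-2}=(\id\times \mu_{1,n-2})\mu_{1,n-1} = (\mu_{1,1}\times \id)\mu_{2,n-2}$); identities of the form $d_id_0 = d_0 d_{i+1}$ for $0<i<n$ reduce to the naturality of $\mu_{1,n-1}$ with respect to the inner face $d_{i+1}$; the mixed identities with $s_i$ are handled analogously using naturality with degeneracies and counitality. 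The purely inner-face and purely degeneracy identities are already built into the colax data.

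Finally, I would check that $\Psi\circ\Phi=\id$ and $\Phi\circ\Psi=\id$. The first is immediate since $\operatorname{pr}_2\circ \langle X(\iota^L),X(\iota^R)\rangle = X(\iota^R_{1,n-1})$, which is exactly the classical $d_0$. The second reduces to the fact that in a cartesian category a map into $X_k\times X_l$ is determined by its two projections, combined with the check that $\operatorname{pr}_1\mu_{k,l}$ and $\operatorname{pr}_2\mu_{k,l}$ coincide with $X$ applied to $\iota^L_{k,l}$ and $\iota^R_{k,l}$; this follows by iterating coassociativity and counitality to express these projections in terms of the basic $d_0$ and $d_n$ constructed by $\Psi$. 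Functoriality on natural transformations is easy: the monoidality of $\alpha$ in the colax setting is forced by the universal property of the product once ordinary naturality holds. The principal obstacle throughout is the systematic bookkeeping in the third paragraph — translating each classical simplicial identity into the language of $\mu$, $\epsilon$ and the inner faces/degeneracies, and extracting it from coassociativity, counitality, and naturality. The cartesian hypothesis enters decisively by rigidifying the colax structure: every object is uniquely a comonoid, and this is what forces the comultiplications to come from outer face maps.
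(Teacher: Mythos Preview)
Your proposal is correct and follows precisely the approach indicated by the paper. Note that the paper does not give its own proof but cites Leinster, and the remark immediately following the proposition sketches exactly the two constructions you describe: $\mu_{k,l} = (d_{k+1}\cdots d_n,\, d_0\cdots d_0)$ going one way, and $d_0 = \pi_2\mu_{1,n-1}$, $d_n = \pi_1\mu_{n-1,1}$ going the other.
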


\begin{Rem}
Suppose $\mathcal{W}$ is cartesian and let $X$ be a simplicial object in $\mathcal{W}$. Its associated colax monoidal functor $\fint^{op}\rightarrow \mathcal{W}$ has comultiplication morphisms given by
$$
\mu_{k,l}: X_{k+l}\xrightarrow{(d_{k+1}...d_{n},d_{0}...d_{0})} X_{k}\times X_{l}
$$
Conversely, if $X: \fint^{op}\rightarrow \mathcal{W}$ is a colax monoidal functor. The outer face morphisms of the associated simplicial object are obtained by using the projections of the product:
$$
d_{0}: X_{n}\xrightarrow{\mu_{1,n-1}} X_{1} \times X_{n-1}\xrightarrow{\pi_{2}} X_{n-1}
$$
and
$$
d_{n}: X_{n}\xrightarrow{\mu_{n-1,1}} X_{n-1} \times X_{1}\xrightarrow{\pi_{1}} X_{n-1}
$$

If $\mathcal{W}$ is not cartesian, these projections are not available in general and the comultiplication $\mu$ of a colax monoidal functor can be considered as a replacement for the outer face morphisms in the monoidal context.
\end{Rem}

From now on for the rest of the paper, we will only consider such colax monoidal functors $X: \fint^{op}\rightarrow \mathcal{W}$ with a discrete set of vertices. We formalise this by replacing $\mathcal{W}$ by a category $\mathcal{V}\Quiv_{S}$ of $\mathcal{V}$-enriched quivers (see \ref{subsection: Notations and conventions}) for some set $S$, and imposing that $X$ is strongly unital.

An alternative but non-equivalent way to realise a set of vertices $S$ consists in turning the monoidal category $\fint$ (which is a one object bicategory) into a bicategory with object set $S$. This approach goes back to \cite{lurie2009higher} and was used in \cite{simpson2012homotopy}, \cite{bacard2010Segal}.

\begin{Def}\label{definition: temp. obj.}
A \emph{tensor-simplicial} or \emph{templicial object} in $\mathcal{V}$ is a pair $(X,S)$ with $S$ a set and
$$
X: \fint^{op} \rightarrow \mathcal{V}\Quiv_{S}
$$
a colax monoidal functor which is strongly unital, i.e. its counit $\epsilon: X_{0}\rightarrow I_{S}$ is an isomorphism. We call the elements of $S$ the \emph{vertices} of $X$. For $n > 0$, an \emph{$n$-simplex} of $X$ is an element of the set $U(X_{n}(a,b))\in \mathcal{V}$ for some $a,b\in S$.

Let $(X,S)$ and $(Y,T)$ be templicial objects. A \emph{templicial morphism} $(X,S)\rightarrow (Y,T)$ is a pair $(\alpha,f)$ with $f: S\rightarrow T$ a map of sets and $\alpha: f_{!}X\rightarrow Y$ a monoidal natural transformation between colax monoidal functors $\fint^{op}\rightarrow \mathcal{V}\Quiv_{T}$. Here, we used the colax monoidal structure of $f_{!}$.

Sometimes we will denote a templicial object $(X,S)$ or a templicial morphism $(\alpha,f)$ simply by $X$ or $\alpha$ respectively, assuming the underlying set or map of sets is clear.
\end{Def}

\begin{Rem}\label{remark: interpretation of comult. maps}
Let $(X,S)$ be a templicial object in $\mathcal{V}$ and consider $a,b \in S$. Then $X_{n}(a,b)\in \mathcal{V}$ should be interpreted as the \emph{object of $n$-simplices of $X$ with first vertex $a$ and last vertex $b$}. Moreover, for all $k,l\geq 0$ and $a,b\in S$, the comultiplication morphism
$$
(\mu^{X}_{k,l})_{a,b}: X_{k+l}(a,b)\rightarrow \coprod_{c\in S}X_{k}(a,c)\otimes X_{l}(c,b)
$$
should be interpreted as taking a $(k+l)$-simplex from $a$ to $b$ and sending it to a $k$-simplex from $a$ to some $c\in S$, along with an $l$-simplex from $c$ to $b$, which are outer faces of the original $(k+l)$-simplex.
\begin{center}
\begin{tikzpicture}
\filldraw
(0,-0.289) circle (1pt)
(0.75,0.577) circle (1pt)
(1.5,-0.289) circle (1pt)
(0.75,-0.7) circle (1pt);
\draw[-latex]
(0,-0.289) -- (0.75,0.577);
\draw[-latex]
(0.75,0.577) -- (1.5,-0.289);
\draw[-latex]
(1.5,-0.289) -- (0.75,-0.7);
\draw[-latex]
(0.75,0.577) -- (0.75,-0.7);
\draw[-latex]
(0,-0.289) -- (0.75,-0.7);
\draw[-latex, dotted]
(0,-0.289) -- (1.5,-0.289);
\fill[fill=gray,opacity=0.4]
(1.5,-0.289) -- (0.75,0.577) -- (0.75,-0.7);
\fill[fill=gray,opacity=0.2]
(0,-0.289) -- (0.75,0.577) -- (0.75,-0.7);

\draw
(3,0) node{$\underset{\mu_{1,2}}{\longmapsto}$};

\filldraw
(4.5,-0.289) circle (1pt)
(5.25,0.577) circle (1pt)
(6,-0.289) circle (1pt)
(5.25,-0.7) circle (1pt);
\draw[-latex]
(4.5,-0.289) -- (5.25,0.577);
\draw[-latex]
(5.25,0.577) -- (6,-0.289);
\draw[-latex]
(6,-0.289) -- (5.25,-0.7);
\draw[-latex]
(5.25,0.577) -- (5.25,-0.7);
\fill[fill=gray,opacity=0.4]
(6,-0.289) -- (5.25,0.577) -- (5.25,-0.7);
\end{tikzpicture}
\end{center}
Unlike for simplicial objects, we thus no longer have direct access to the outer faces of a simplex, only to to outer faces which are glued at a vertex.
\end{Rem}

\begin{Def}
Given maps of sets $f: S\rightarrow T$ and $g: T\rightarrow U$, there is a canonical monoidal natural isomorphism $(gf)_{!}\simeq g_{!}f_{!}$ between colax monoidal functors $\mathcal{V}\Quiv_{S}\rightarrow \mathcal{V}\Quiv_{U}$. Consequently, we can define the \emph{composition} of two templicial morphisms $(\alpha,f): (X,S)\rightarrow (Y,T)$ and $(\beta,g): (Y,T)\rightarrow (Z,U)$ as the templicial morphism $(\gamma,gf)$ with
$$
\gamma: (gf)_{!}X\simeq g_{!}f_{!}X\xrightarrow{g_{!}\alpha} g_{!}Y\xrightarrow{\beta} Z
$$
Further, we have a canonical monoidal natural isomorphism $\varphi: (\id_{S})_{!}\xrightarrow{\sim} \id_{\mathcal{V}\Quiv_{S}}$ for any set $S$, and the \emph{identity} at $(X,S)$ is defined as the templicial morphism $(\varphi X,\id_{S})$. It is then easy to see that templicial objects in $\mathcal{V}$ and templicial morphisms between them form a category which we denote by
$$
\ts\mathcal{V}
$$
\end{Def}

\begin{Rem}
A more abstract construction of the category $\ts\mathcal{V}$ of templicial objects is as follows. One can show that the assignment $S\mapsto \Colax(\fint^{op},\Quiv_{S})$ extends to a pseudofunctor $\Phi: \Set\rightarrow \Cat$ where $\Phi(f)$ is given by post-composition with the colax monoidal functor $f_{!}$, for any map of sets $f: S\rightarrow T$. Taking the Grothendieck construction $\int\Phi$, we find $\ts\mathcal{V}$ as the full subcategory spanned by the strongly unital colax functors $\fint^{op}\rightarrow \mathcal{V}\Quiv_{S}$.
\end{Rem}

\begin{Prop}\label{proposition: temp. sets are simp. sets}
There is an equivalence of categories:
$$
S_{\times}\Set\simeq \SSet
$$
\end{Prop}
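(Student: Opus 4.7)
The plan is to exhibit explicit quasi-inverse functors $F: S_{\times}\Set \to \SSet$ and $G: \SSet \to S_{\times}\Set$, in the spirit of the Remark following Proposition \ref{proposition: colax functors in cartesian monoidal cat.}. Given $(X,S)\in S_{\times}\Set$, I set $F(X)_{n} = \coprod_{a,b\in S}X_{n}(a,b)$. The inner face maps $d_{j}$ and degeneracies $s_{i}$ for $X$ assemble componentwise. The outer face $d_{0}: F(X)_{n}\to F(X)_{n-1}$ is defined on the $(a,b)$-component as the composite
$$
X_{n}(a,b)\xrightarrow{(\mu_{1,n-1})_{a,b}}\coprod_{c\in S}X_{1}(a,c)\times X_{n-1}(c,b)\xrightarrow{\sqcup_{c}\pi_{2}}\coprod_{c\in S}X_{n-1}(c,b)\hookrightarrow F(X)_{n-1},
$$
using that the tensor $\otimes_{S}$ on $\Set\Quiv_{S}$ takes values in actual cartesian products in $\Set$ in each component. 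Symmetrically, $d_{n}$ is obtained from $(\mu_{n-1,1})_{a,b}$ and the first projection. The simplicial identities involving only $d_{j}, s_{i}$ for inner indices are already encoded in the functoriality of $X$; those mixing outer and inner face/degeneracy maps follow from the naturality axioms of $\mu$ combined with coassociativity, which lets one rewrite e.g.\ $d_{0}d_{0}$ via $(\id\otimes \mu_{1,n-2})\mu_{1,n-1}=\mu_{1,1,n-2}$.

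For the reverse functor, given a simplicial set $Y$, set $S = Y_{0}$. Every $y\in Y_{n}$ has a well-defined initial vertex $v_{0}(y)=(d_{1}\cdots d_{n})(y)$ and terminal vertex $v_{n}(y)=(d_{0}\cdots d_{0})(y)$. Define $G(Y)_{n}(a,b) = \{y\in Y_{n}\mid v_{0}(y)=a,\ v_{n}(y)=b\}$; this partitions $Y_{n}$. Inner face and degeneracy maps in $Y$ preserve initial and terminal vertices, so they restrict. The comultiplication
$$
(\mu_{k,l})_{a,b}: G(Y)_{k+l}(a,b)\longrightarrow \coprod_{c\in S}G(Y)_{k}(a,c)\times G(Y)_{l}(c,b)
$$
sends $y$ to the pair $\bigl((d_{k+1}\cdots d_{k+l})(y),\ (d_{0})^{k}(y)\bigr)$ attached at $c = v_{k}(y)$. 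Since $G(Y)_{0}(a,b)$ is a singleton when $a=b$ and empty otherwise, the counit $\epsilon: G(Y)_{0}\xrightarrow{\sim} I_{S}$ is an isomorphism, so $G(Y)$ is strongly unital.

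Finally, I verify that $F$ and $G$ are mutually inverse up to natural isomorphism. The composite $FG(Y)$ regroups $Y_{n}$ by its pairs of extreme vertices and then reunites the pieces, recovering $Y_{n}$; the outer face maps recovered through the projections coincide with the original $d_{0}, d_{n}$ by construction of $\mu_{k,l}$ in $G(Y)$. For $GF(X)$, strong unitality identifies the 0-simplices of $F(X)$ with $S$, and partitioning $F(X)_{n}$ by initial and terminal vertex reproduces the decomposition $\coprod_{a,b}X_{n}(a,b)$; the comultiplication computed from the outer face maps of $F(X)$ matches $\mu_{k,l}^{X}$ by the very definition of $d_{0}, d_{n}$ via $\mu$. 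The main obstacle is the bookkeeping verification that the comultiplications on $G(Y)$ are coassociative and natural with respect to all inner faces and degeneracies; each check is a direct consequence of the simplicial identities in $Y$ applied to the explicit formulas $(d_{0})^{k}$ and $d_{k+1}\cdots d_{k+l}$, so the difficulty is combinatorial rather than conceptual.
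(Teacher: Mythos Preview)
Your proposal is correct and follows essentially the same approach as the paper: both construct the same pair of quasi-inverse functors, with $G$ (the paper's $\varphi$) partitioning $Y_{n}$ by initial and terminal vertex and $F$ (the paper's $\mathfrak{c}$) taking the disjoint union $\coprod_{a,b}X_{n}(a,b)$. The only difference is presentational: the paper invokes Proposition~\ref{proposition: colax functors in cartesian monoidal cat.} to pass between simplicial objects and colax monoidal functors into $\Set$ and leaves the remaining verifications as ``readily verified'', whereas you spell out the outer face maps of $F(X)$ directly via $\mu$ and projections and give the explicit formula for the comultiplication of $G(Y)$.
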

\begin{proof}
Let $K$ be a simplicial set. By Proposition \ref{proposition: colax functors in cartesian monoidal cat.}, we may consider $K$ as a colax monoidal functor $\fint^{op}\rightarrow \Set$ with comultiplication $\mu$ and counit $\epsilon$. Then define for all $n\geq 0$ and $a,b\in K_{0}$:
$$
K_{n}(a,b) = \{\sigma\in K_{n}\mid d_{1}...d_{n}(\sigma) = a, d_{0}...d_{0}(\sigma) = b\}
$$
Given $f: [m]\rightarrow [n]$ in $\mathbf{\Delta}_{f}$, it follows from the simplicial identities that $K(f): K_{n}\rightarrow K_{m}$ restricts to a map $K(f)_{a,b}: K_{n}(a,b)\rightarrow K_{m}(a,b)$. Moreover, it is clear that for all $k,l\geq 0$ and $a,b\in K_{0}$, $\mu_{k,l}$ restricts to
$$
\mu_{k,l}\vert_{K_{k+l}(a,b)}: K_{k+l}(a,b)\rightarrow \coprod_{c\in K_{0}}K_{k}(a,c)\times K_{l}(c,b)
$$
and $K_{0}(a,a) = \{a\}$ if $a = b$, and $K_{0}(a,b) = \emptyset$ if $a\neq b$. Consequently, the functor
$$
\varphi(K): \fint^{op}\rightarrow \Quiv_{K_{0}}: [n]\mapsto \left(K_{n}(a,b)\right)_{a,b\in K_{0}}$$
is strongly unital and colax monoidal. Hence $(\varphi(K),K_{0})$ is a templicial object.

Conversely, if $(X,S)$ is a templicial object in $\Set$, then we can define a simplicial set $\mathfrak{c}(X)$ by setting for all $n\geq 0$:
$$
\mathfrak{c}(X)_{n} = \coprod_{a,b\in S}X_{n}(a,b)
$$
It is readily verified that the assignments $K\mapsto \varphi(K)$ and $X\mapsto \mathfrak{c}(X)$ can be extended to mutually inverse equivalences between $\SSet$ and $S_{\times}\Set$.
\end{proof}

As $F: \Set\rightarrow \mathcal{V}$ preserves colimits and is strong monoidal, post-composition with $F$ induces a functor
$$
\tilde{F}: \SSet\simeq S_{\times}\Set\rightarrow \ts\mathcal{V}
$$
More precisely, given a simplicial set $K$, $\tilde{F}(K)$ has vertex set $K_{0}$ and for all $a,b\in K_{0}$ and $n\geq 0$ we have
$$
\tilde{F}(K)_{n}(a,b) = F(K_{n}(a,b))
$$
where $K_{n}(a,b) = \{\sigma\in K_{n}\mid d_{1}...d_{n}(\sigma) = a, d_{0}...d_{0}(\sigma) = b\}$ is the set of $n$-simplices of $K$ with first vertex $a$ and last vertex $b$, as above.

\begin{Prop}\label{proposition: free-forget adjunction}
The category of templicial objects $\ts\mathcal{V}$ is cocomplete and the functor $\tilde{F}: \SSet\rightarrow \ts\mathcal{V}$ has a right-adjoint
$$
\tilde{U}: \ts\mathcal{V}\rightarrow \SSet
$$
\end{Prop}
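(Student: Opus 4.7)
The proof splits into two parts: first establish cocompleteness of $\ts\mathcal{V}$, then construct $\tilde{U}$ via a representable formula and derive the adjunction from cocontinuity of $\tilde{F}$.

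For cocompleteness, I would work with the description of $\ts\mathcal{V}$ as a full subcategory of the Grothendieck construction $\int\Phi$ from the preceding remark. Given a diagram $\{(X_{i},S_{i})\}_{i\in \mathcal{I}}$ in $\ts\mathcal{V}$, set $S:=\colim_{i}S_{i}$ in $\Set$ with structural maps $\iota_{i}:S_{i}\to S$, and push each $X_{i}$ forward along the colax monoidal left adjoint $\iota_{i,!}$. Since $\mathcal{V}\Quiv_{S}$ is cocomplete and $\otimes_{S}$ preserves colimits in each variable (as $\mathcal{V}$ is closed), the category $\Colax(\fint^{op},\mathcal{V}\Quiv_{S})$ admits colimits computed pointwise, with colax structure induced by the universal property (the comultiplication on the colimit is the unique map factoring the family of composites $X_{i}(k+l)\xrightarrow{\mu^{X_i}}X_{i}(k)\otimes X_{i}(l)\to (\colim X_{j})(k)\otimes (\colim X_{j})(l)$). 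Set $X:=\colim_{i}\iota_{i,!}X_{i}$. Verifying $(X,S)\in \ts\mathcal{V}$ amounts to showing $X_{0}\cong I_{S}$: using $(X_{i})_{0}=I_{S_{i}}$ and the explicit formula for $\iota_{i,!}$, one computes $\iota_{i,!}(I_{S_{i}})(x,y)=F(\iota_{i}^{-1}(x))$ when $x=y$ and $0$ otherwise, and then the set-level identity $\colim_{i}\iota_{i}^{-1}(x)\cong \{x\}$ (which follows from $S=\colim_{i}S_{i}$, since any zigzag in the full diagram joining two preimages of $x$ can only pass through other preimages of $x$) combined with cocontinuity of $F$ yields $X_{0}\cong I_{S}$.

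For the adjunction, define $\tilde{U}(X)_{n}:=\ts\mathcal{V}(\tilde{F}(\Delta^{n}),X)$ with simplicial structure induced by the functoriality of $\tilde{F}\circ \Delta^{(-)}:\simp\to \ts\mathcal{V}$. Since every simplicial set is canonically a colimit of representables $K\cong \colim_{\Delta^{n}\to K}\Delta^{n}$, the standard computation
\[
\ts\mathcal{V}(\tilde{F}K,X)\cong \lim_{\Delta^{n}\to K}\ts\mathcal{V}(\tilde{F}\Delta^{n},X)=\lim_{\Delta^{n}\to K}\tilde{U}(X)_{n}\cong \SSet(K,\tilde{U}X)
\]
will yield the adjunction, with naturality being formal, provided $\tilde{F}$ preserves colimits. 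To see this, note that via the equivalence $\SSet\simeq S_{\times}\Set$ of Proposition~\ref{proposition: temp. sets are simp. sets}, $\tilde{F}$ is obtained by fibrewise postcomposition with the colimit-preserving strong monoidal left adjoint $F:\Set\to \mathcal{V}$, which commutes both with the pointwise colimits in the fibres of $\Phi$ and with the pushforwards $\iota_{!}$ (both being colimit-preserving left adjoints induced by the underlying map of sets).

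The delicate step I anticipate is the preservation of strong unitality under the colimit, which cleanly reduces to the set-theoretic identity $\colim_{i}\iota_{i}^{-1}(x)\cong \{x\}$ whenever $S=\colim_{i}S_{i}$; without this observation, one would worry that a colimit of strongly unital colax functors (whose counits assemble into a colimit of isomorphisms to the \emph{non-constant} diagram of units $\iota_{i,!}I_{S_{i}}$) fails to be strongly unital. Once this is in hand, the rest of the argument is a routine combination of the Grothendieck-construction calculus, the universal property of the presheaf category $\SSet$, and the cocontinuity of $F$.
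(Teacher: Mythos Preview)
Your proof is correct and follows essentially the same route as the paper: construct colimits by first taking $S=\colim_i S_i$ in $\Set$, then $X=\colim_i \iota_{i,!}X_i$ pointwise in $\Colax(\fint^{op},\mathcal{V}\Quiv_S)$, verify strong unitality, and finally obtain $\tilde{U}$ from cocontinuity of $\tilde{F}$ via the representable formula $\tilde{U}(X)_n=\ts\mathcal{V}(\tilde{F}\Delta^n,X)$. The only difference is one of detail: the paper simply asserts the isomorphism $\colim_j(\iota_j)_!I_{S_j}\xrightarrow{\sim}I_S$, whereas you spell out the underlying set-theoretic identity $\colim_i\iota_i^{-1}(x)\cong\{x\}$ (which is correct, e.g.\ by universality of colimits in $\Set$), so your write-up in fact fills a small gap the paper leaves implicit.
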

\begin{proof}
As $\mathcal{V}$ is cocomplete, so is $\mathcal{V}\Quiv_{S}$. It is readily verified that then also $\Colax(\fint^{op},\mathcal{V}\Quiv_{S})$ is cocomplete with colimits given pointwise. Now consider a diagram $D: \mathcal{J}\rightarrow \ts\mathcal{V}: j\mapsto (X^{j},S^{j})$ with $\mathcal{J}$ a small category. Let $S = \colim_{j\in\mathcal{J}}S^{j}$ in $\Set$ and denote $\iota_{j}: S^{j}\rightarrow S$ for the canonical map. Then consider the colimit $X = \colim_{j\in \mathcal{J}}(\iota_{j})_{!}X^{j}$ in $\Colax(\fint^{op},\mathcal{V}\Quiv_{S})$. The counit of $X$ is
$$
\colim_{j\in \mathcal{J}}(\iota_{j})_{!}(X^{j}_{0})\xrightarrow{\colim_{j}(\iota_{j})_{!}(\epsilon_{X^{j}})} \colim_{j\in \mathcal{J}}(\iota_{j})_{!}(I_{S^{j}})\xrightarrow{\sim} I_{S}
$$
which is an isomorphism since each $\epsilon_{X^{j}}$ is. Thus the pair $(X,S)$ is a templicial object, which is easily seen to be the colimit of the diagram $D$ in $\ts\mathcal{V}$.

With the above description of the colimits in $\ts\mathcal{V}$, it is clear that $\tilde{F}$ preserves colimits and therefore has a right-adjoint $\tilde{U}: \ts\mathcal{V}\rightarrow \SSet$ given by $\tilde{U}(X)_{n} = \ts\mathcal{V}(\tilde{F}(\Delta^{n}),X)$ for all templicial objects $X$ and integers $n\geq 0$.
\end{proof}

\begin{Rem}\label{remark: simplices of underlying simp. set}
Let us make the right-adjoint $\tilde{U}$ a bit more explicit. Given a templicial object $(X,S)$, an $n$-simplex of $\tilde{U}(X)$ is a templicial morphism $\tilde{F}(\Delta^{n})\rightarrow X$, which is equivalent to a pair
$$
\left((a_{i})_{i=0}^{n}, (\alpha_{i,j})_{0\le i < j\leq n}\right)
$$
with $a_{i}\in S$ and $\alpha_{i,j}\in U(X_{j-i}(a_{i},a_{j}))$ such that for all $0\leq i < k < j\leq n$:
$$
\mu_{k-i,j-k}(\alpha_{i,j}) = \alpha_{i,k}\otimes \alpha_{k,j}
$$
in $U((X_{k-i}\otimes_{S} X_{j-k})(a_{i},a_{j}))$. For example, $\tilde{U}(X)_{0}$ recovers the set $S$ and $\tilde{U}(X)_{1}$ is given by the disjoint union of all sets $U(X_{1}(a,b))$ with $a,b\in S$.
\end{Rem}

Unlike the case for simplicial objects $S\mathcal{V}$, not every $n$-simplex of a templicial object $(X,S)\in \ts\mathcal{V}$ is uniquely represented by a morphism $\tilde{F}(\Delta^{n})\rightarrow X_{n}$. More precisely, the canonical map
\begin{equation}\label{diagram: interior of an n-simplex}
\tilde{U}(X)_{n}\rightarrow \coprod_{a,b\in S}U(X_{n}(a,b)): (\alpha_{i,j})_{i,j}\mapsto \alpha_{0,n}
\end{equation}
need not be injective or surjective for $n\geq 2$, as is shown in Example \ref{example: free non-cell-cofibrant temp. obj.}.

The lack of representation of simplices by morphisms makes templicial objects considerably harder to work with than ordinary simplicial sets. In an effort to resolve this issue, we extend $\ts\mathcal{V}$ to a category of enriched categories $\mathcal{V}\Cat_{\nec}$ in \S\ref{subsection: Necklace categories}.

\begin{Ex}\label{example: free non-cell-cofibrant temp. obj.}

Let $\mathcal{V} = \Ab$ be the monoidal category of abelian groups with the tensor product as monoidal product and $\mathbb{Z}$ as monoidal unit. Consider the simplicial set $K = \partial\Delta^{2}\amalg_{\Delta^{1}}\partial\Delta^{2}$:
\begin{center}
\begin{tikzpicture}[scale=1.5]
\filldraw
(0,-0.289) circle (1pt) node[left]{$a$}
(0.2,0.7) circle (1pt) node[above]{$c_{1}$}
(0.8,0.7) circle (1pt) node[above]{$c_{2}$}
(1,-0.289) circle (1pt) node[right]{$b$};
\draw[-latex] (0,-0.289) -- node[left,pos=0.6]{$f_{1}$} (0.2,0.7);
\draw[-latex] (0,-0.289) -- node[left,pos=0.6]{$f_{2}$} (0.8,0.7);
\draw[-latex] (0.2,0.7) -- node[right,pos=0.4]{$g_{1}$} (1,-0.289);
\draw[-latex] (0.8,0.7) -- node[right,pos=0.4]{$g_{2}$} (1,-0.289);
\draw[-latex] (0,-0.289) -- node[below,pos=0.5]{$h$} (1,-0.289);
\end{tikzpicture}
\end{center}
We can extend $\tilde{F}(K)$ to a templicial object $X$ with a $2$-simplex $w\in X_{2}(a,b)$ by setting $X_{2}(a,b) = \mathbb{Z}w\oplus F(K_{2}(a,b))$ and similarly adding the degeneracies of $w$. The inner face maps and comultiplication maps of $X$ are uniquely determided by setting
$$
d_{1}(w) = h\quad \text{and}\quad \mu_{1,1}(w) = f_{1}\otimes g_{1} + f_{2}\otimes g_{2}
$$
\begin{center}
\begin{tikzpicture}[scale=1.5]
\filldraw[fill=gray,opacity=0.3]
(0,-0.289) -- (0.2,0.7) -- (1,-0.289);
\filldraw[fill=gray,opacity=0.3]
(0,-0.289) -- (0.8,0.7) -- (1,-0.289);
\filldraw
(0,-0.289) circle (1pt) node[left]{$a$}
(0.2,0.7) circle (1pt) node[above]{$c_{1}$}
(0.8,0.7) circle (1pt) node[above]{$c_{2}$}
(1,-0.289) circle (1pt) node[right]{$b$}
(0.5,0) node{$w$};
\draw[-latex] (0,-0.289) -- node[left,pos=0.6]{$f_{1}$} (0.2,0.7);
\draw[-latex] (0,-0.289) -- node[left,pos=0.6]{$f_{2}$} (0.8,0.7);
\draw[-latex] (0.2,0.7) -- node[right,pos=0.4]{$g_{1}$} (1,-0.289);
\draw[-latex] (0.8,0.7) -- node[right,pos=0.4]{$g_{2}$} (1,-0.289);
\draw[-latex] (0,-0.289) -- node[below,pos=0.5]{$h$} (1,-0.289);
\end{tikzpicture}
\end{center}

Then $w$ does not lie in the image of the map \eqref{diagram: interior of an n-simplex} for $n = 2$. Indeed, this would require a $2$-simplex $(\alpha_{i,j})_{0\leq i < j\leq 2}$ of $\tilde{U}(X)$ with $\alpha_{0,2} = w$. But $\mu_{1,1}(w)$ is not a pure tensor while $\mu_{1,1}(\alpha_{0,2}) = \alpha_{0,1}\otimes \alpha_{1,2}$. Moreover, since $\mu_{1,1}(2s_{0}(h)) = 2s_{0}(a)\otimes h = s_{0}(a)\otimes 2h$, we have two distinct $2$-simplices of $\tilde{U}(X)_{2}$ which map to $2s_{0}(h)\in X_{2}(a,b)$.
\end{Ex}

\subsection{The templicial nerve}\label{subsection: The templicial nerve}

Given a monoidal category $\mathcal{W}$, there is a well-known equivalence (this goes back to Maclane \cite[\S V.II]{maclane1971categories}):
\begin{equation}\label{equation: universal monoid}
\Mon(\mathcal{W})\simeq \mathrm{StrMon}(\asimp,\mathcal{W})
\end{equation}
between the categories of monoids in $\mathcal{W}$ and strong monoidal functors $\asimp\rightarrow \mathcal{W}$. Due to the monoidal equivalence $\asimp\simeq \fint^{op}$, we may as well consider strong monoidal functors $\fint^{op}\rightarrow \mathcal{W}$.

\begin{Def}\label{definition: templicial nerve}
Let $\mathcal{C}$ be a small $\mathcal{V}$-category, which we consider as a monoid $(\mathcal{C},m_{\mathcal{C}},u_{\mathcal{C}})$ in $\mathcal{V}\Quiv_{\Ob(\mathcal{C})}$. Applying \eqref{equation: universal monoid} to the case $\mathcal{W} = \mathcal{V}\Quiv_{\Ob(\mathcal{C})}$, we obtain an associated strong monoidal functor which we denote by
$$
N_{\mathcal{V}}(\mathcal{C}): \fint^{op}\rightarrow \mathcal{V}\Quiv_{\Ob(\mathcal{C})}
$$
In particular, the pair $(N_{\mathcal{V}}(\mathcal{C}),\Ob(\mathcal{C}))$ forms a templicial object which we call the \emph{templicial nerve} of the $\mathcal{V}$-category $\mathcal{C}$.
\end{Def}

Explicitly, $N_{\mathcal{V}}(\mathcal{C})$ is given by taking the $n$-fold monoidal product of the $\mathcal{V}$-quiver $\mathcal{C}$:
$$
N_{\mathcal{V}}(\mathcal{C})_{n} = \mathcal{C}^{\otimes n}
$$
for all integers $n\geq 0$. Further, the inner face and degeneracy morphisms are
\begin{align*}
d_{j} &= \id_{\mathcal{C}}^{\otimes j-1}\otimes_{S} m_{\mathcal{C}}\otimes_{S} \id_{\mathcal{C}}^{\otimes n-j-1}: \mathcal{C}^{\otimes n}\rightarrow \mathcal{C}^{\otimes n-1}\\
s_{i} &= \id_{\mathcal{C}}^{\otimes i}\otimes_{S} u_{\mathcal{C}}\otimes_{S} \mathcal{C}^{\otimes n-i}: \mathcal{C}^{\otimes n}\rightarrow \mathcal{C}^{\otimes n+1}
\end{align*}
for all $0\leq i\leq n$ and $0 < j < n$. Finally, the comultiplication morphisms and counit are given by the canonical isomorphisms
$$
\mu_{k,l}: \mathcal{C}^{\otimes k+l}\xrightarrow{\sim} \mathcal{C}^{\otimes k}\otimes_{S} \mathcal{C}^{\otimes l}\quad \text{and}\quad \epsilon: \mathcal{C}^{\otimes 0}\xrightarrow{\sim} I_{\Ob(\mathcal{C})}
$$
for any $k,l\geq 0$.

Recall the base change functor $f_{!}: \mathcal{V}\Quiv_{S}\rightarrow \mathcal{V}\Quiv_{T}$ and its right-adjoint $f^{*}: \mathcal{V}\Quiv_{T}\rightarrow \mathcal{V}\Quiv_{S}$ for a given map of sets $f: S\rightarrow T$ (see \S\ref{subsection: Notations and conventions}).

\begin{Lem}\label{lemma: nerve unique extension}
Let $(X,S)$ be a templicial object, $\mathcal{C}$ a small $\mathcal{V}$-enriched category and $f: S\rightarrow \Ob(\mathcal{C})$ a map of sets. Then we have a bijection between monoidal natural transformations $f_{!}X\rightarrow N_{\mathcal{V}}(\mathcal{C})$ and quiver morphisms $H: X_{1}\rightarrow f^{*}(\mathcal{C})$ such that the diagrams
\begin{equation}\label{diagram: nerve unique extension}
\begin{tikzcd}
	{X^{\otimes 2}_{1}} & {f^{*}(\mathcal{C})^{\otimes 2}} & {f^{*}(\mathcal{C}^{\otimes 2})} \\
	{X_{2}} & {X_{1}} & {f^{*}(\mathcal{C})}
	\arrow["{H^{\otimes 2}}", from=1-1, to=1-2]
	\arrow[from=1-2, to=1-3]
	\arrow["{\mu_{1,1}}", from=2-1, to=1-1]
	\arrow["{f^{*}(\tilde{m}_{\mathcal{C}})}", from=1-3, to=2-3]
	\arrow["H"', from=2-2, to=2-3]
	\arrow["{d_{1}}"', from=2-1, to=2-2]
\end{tikzcd}
\begin{tikzcd}
	& {I_{S}} & {f^{*}(I_{\Ob(\mathcal{C})})} \\
	{X_{0}} & {X_{1}} & {f^{*}(\mathcal{C})}
	\arrow["\epsilon", from=2-1, to=1-2]
	\arrow["\sim"', from=2-1, to=1-2]
	\arrow[from=1-2, to=1-3]
	\arrow["{f^{*}(u_{\mathcal{C}})}", from=1-3, to=2-3]
	\arrow["{s_{0}}"', from=2-1, to=2-2]
	\arrow["H"', from=2-2, to=2-3]
\end{tikzcd}
\end{equation}
commute.
\end{Lem}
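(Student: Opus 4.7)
The plan is to exploit the strong monoidality of $N_{\mathcal{V}}(\mathcal{C})$: because all its comultiplications $\mu^{N_{\mathcal{V}}(\mathcal{C})}_{k,l}$ and its counit $\epsilon^{N_{\mathcal{V}}(\mathcal{C})}$ are isomorphisms, any monoidal natural transformation $\alpha: f_!X\rightarrow N_{\mathcal{V}}(\mathcal{C})$ is uniquely determined, via iterated application of monoidality, by its degree-one component $\alpha_1: f_!X_1\rightarrow \mathcal{C}$. Transposing along $f_!\dashv f^*$, this corresponds to a quiver morphism $H: X_1\rightarrow f^*(\mathcal{C})$. The content of the lemma is then to single out which $H$ extend to a full monoidal natural transformation, and the two displayed diagrams do so by expressing naturality at the ``atomic'' morphisms $d_1: X_2\rightarrow X_1$ and $s_0: X_0\rightarrow X_1$.

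For the forward direction, given $\alpha$ we let $H$ be the adjunct of $\alpha_1$. Naturality of $\alpha$ with respect to $d_1: X_2\rightarrow X_1$ gives $\alpha_1\circ f_!(d_1) = d_1^{N_{\mathcal{V}}(\mathcal{C})}\circ \alpha_2$, while monoidality at $(1,1)$ expresses $\alpha_2$ as $\alpha_1\otimes \alpha_1$ composed with the comultiplications (using that $\mu^{N_{\mathcal{V}}(\mathcal{C})}_{1,1}$ is invertible and that $d_1^{N_{\mathcal{V}}(\mathcal{C})}$ is identified with $\tilde{m}_{\mathcal{C}}$ under this isomorphism). Transposing the resulting equality along $f_!\dashv f^*$ and unwinding the colax structure of $f_!$ into the lax structure of $f^*$ yields precisely the first diagram. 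Similarly, naturality with respect to $s_0$, combined with the counit condition $\epsilon^{N_{\mathcal{V}}(\mathcal{C})}\alpha_0 = \epsilon^{f_!X}$ and the strongly unital hypothesis that $\epsilon^X$ is invertible, yields the second diagram.

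For the backward direction, given $H$ with adjunct $\alpha_1: f_!X_1\rightarrow \mathcal{C}$, define for each $n\geq 1$
$$\alpha_n: f_!X_n \xrightarrow{f_!(\mu^X_{1,\ldots,1})} f_!\bigl(X_1^{\otimes_{S} n}\bigr) \longrightarrow (f_!X_1)^{\otimes n} \xrightarrow{\alpha_1^{\otimes n}} \mathcal{C}^{\otimes n},$$
where the middle arrow is the iterated colax structure of $f_!$; for $n = 0$, define $\alpha_0$ as $f_!(\epsilon^X)$ followed by the canonical isomorphism $f_!(I_S)\xrightarrow{\sim} I_{\Ob(\mathcal{C})}$. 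Monoidality and the counit condition for $\alpha$ follow from coassociativity of $\mu^X$ together with the coherence of the colax structure of $f_!$ and the strong monoidality of $N_{\mathcal{V}}(\mathcal{C})$. Naturality with respect to an inner face $d_j: X_n\rightarrow X_{n-1}$ reduces, via the naturality of $\mu^X$ with respect to $d_j$ applied to $\mu^X_{j-1,2,n-j-1}$, to the first diagram applied to the ``middle'' $X_2$ tensor factor; similarly, naturality with respect to a degeneracy $s_i$ reduces to the second diagram via $\mu^X_{i,0,n-i}$.

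The main obstacle lies in this last reduction for the backward direction: one must carefully unwind coassociativity, the colax coherence of $f_!$, and the naturality identities of $\mu^X$ with respect to $d_j$ and $s_i$ in order to isolate the single $X_2$ (respectively $X_0$) tensor factor on which the first (respectively second) diagram can be applied. Once these verifications are in place, the two constructions are manifestly mutually inverse, since both preserve the data at $n=1$ and the uniqueness of the higher $\alpha_n$ (forced by monoidality into a strong monoidal target) ensures that no information is gained or lost on the round trip.
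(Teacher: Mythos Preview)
Your proof is correct and follows essentially the same approach as the paper: both extract $H$ as the adjunct of $\alpha_1$, observe that monoidality into the strong monoidal target $N_{\mathcal{V}}(\mathcal{C})$ forces the formula $\alpha_n = \alpha_1^{\otimes n}\circ(\text{colax})\circ f_!(\mu^X_{1,\ldots,1})$, and then check that naturality is equivalent to the two displayed diagrams. You are more explicit than the paper about how naturality at a general $d_j$ or $s_i$ reduces, via the decomposition $\mu^X_{j-1,2,n-j-1}$ (resp.\ $\mu^X_{i,0,n-i}$), to the single atomic case encoded in each diagram, whereas the paper simply asserts this step.
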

\begin{proof}
Given a monoidal natural transformation $\alpha: f_{!}X\rightarrow N_{\mathcal{V}}(\mathcal{C})$, define $H_{\alpha}: X_{1}\rightarrow f^{*}(\mathcal{C})$ to be adjoint to $\alpha_{1}: f_{!}(X_{1})\rightarrow \mathcal{C}$. It follows from the monoidality of $\alpha$ that for all $n\geq 0$, $\alpha_{n}$ is the composite
$$
f_{!}(X_{n})\xrightarrow{f_{!}(\mu_{1,...,1})} f_{!}(X_{1}^{\otimes n})\rightarrow f_{!}(X_{1})^{\otimes n}\xrightarrow{\alpha^{\otimes n}_{1}} \mathcal{C}^{\otimes n}
$$
where we used the colax monoidal structure of $f_{!}$. So the assignment $\alpha\mapsto H_{\alpha}$ is injective. Moreover, it then follows from the naturality of $\alpha$ that $H_{\alpha}$ satisfies (\ref{diagram: nerve unique extension}). Conversely, if $H: X_{1}\rightarrow f^{*}(\mathcal{C})$ satisfies (\ref{diagram: nerve unique extension}), then defining $\alpha_{1}$ as adjoint to $H$ and $\alpha_{n}$ as above, it follows that $\alpha: f_{!}X\rightarrow N_{\mathcal{V}}(\mathcal{C})$ is a natural transformation. It is immediate that $\alpha$ is also monoidal.
\end{proof}

\begin{Prop}\label{proposition: nerve fully faithful}
The assignment $\mathcal{C}\mapsto N_{\mathcal{V}}(\mathcal{C})$ of Definition \ref{definition: templicial nerve} extends to a fully faithful functor $N_{\mathcal{V}}: \mathcal{V}\Cat\rightarrow \ts\mathcal{V}$. The essential image of $N_{\mathcal{V}}$ consists of all templicial objects $(X,S)$ for which $X: \fint^{op}\rightarrow \mathcal{V}\Quiv_{S}$ is strong monoidal.
\end{Prop}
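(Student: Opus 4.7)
My plan is to derive both the full faithfulness and the description of the essential image as corollaries of Lemma~\ref{lemma: nerve unique extension} together with the monoidal equivalence $\asimp \simeq \fint^{op}$ applied to the universal property \eqref{equation: universal monoid} of monoids. The central observation is that for $\mathcal{C}$ a small $\mathcal{V}$-category, the templicial object $N_{\mathcal{V}}(\mathcal{C})$ has the special feature that $\mu_{1,1}$ on $N_{\mathcal{V}}(\mathcal{C})_{2} = \mathcal{C}^{\otimes 2}$ is the identity, $d_{1} = m_{\mathcal{C}}$, $\epsilon$ is the identity, and $s_{0} = u_{\mathcal{C}}$. Under these identifications, the two commutative diagrams in \eqref{diagram: nerve unique extension} of Lemma~\ref{lemma: nerve unique extension} become precisely the compatibility of a quiver morphism $H: \mathcal{C} \to f^{*}(\mathcal{D})$ with composition and identities, i.e.\ the defining conditions for $(H,f)$ to be a $\mathcal{V}$-functor $\mathcal{C} \to \mathcal{D}$.

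First I would define $N_{\mathcal{V}}$ on morphisms. Given a $\mathcal{V}$-functor $(H,f): \mathcal{C} \to \mathcal{D}$, Lemma~\ref{lemma: nerve unique extension} applied with $X = N_{\mathcal{V}}(\mathcal{C})$ produces a unique monoidal natural transformation $\alpha_{H,f}: f_{!}N_{\mathcal{V}}(\mathcal{C}) \to N_{\mathcal{V}}(\mathcal{D})$ extending $H$; explicitly $\alpha_{n} = H^{\otimes n}$ up to the coherent isomorphisms coming from $f_{!}$ being colax monoidal and $N_{\mathcal{V}}(\mathcal{C})$, $N_{\mathcal{V}}(\mathcal{D})$ being strong monoidal. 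I set $N_{\mathcal{V}}(H,f) := (\alpha_{H,f}, f)$. Functoriality (preservation of identities and composition) follows from uniqueness in Lemma~\ref{lemma: nerve unique extension}: both $N_{\mathcal{V}}((K,g)\circ (H,f))$ and $N_{\mathcal{V}}(K,g)\circ N_{\mathcal{V}}(H,f)$ restrict in degree $1$ to the composite quiver morphism $\mathcal{C} \to (gf)^{*}(\mathcal{E})$, so they agree.

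Full faithfulness is then immediate from the same lemma: a templicial morphism $(\alpha,f): N_{\mathcal{V}}(\mathcal{C}) \to N_{\mathcal{V}}(\mathcal{D})$ is in bijection with a quiver morphism $H: \mathcal{C} \to f^{*}(\mathcal{D})$ satisfying \eqref{diagram: nerve unique extension}, and as noted above these conditions are exactly the axioms of a $\mathcal{V}$-functor. Thus $(\alpha,f) = N_{\mathcal{V}}(H,f)$ for a unique $\mathcal{V}$-functor $(H,f)$, giving the bijection $\mathcal{V}\Cat(\mathcal{C},\mathcal{D}) \xrightarrow{\sim} \ts\mathcal{V}(N_{\mathcal{V}}(\mathcal{C}), N_{\mathcal{V}}(\mathcal{D}))$.

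For the essential image, one direction is by construction: $N_{\mathcal{V}}(\mathcal{C})$ is strong monoidal because its comultiplications and counit are the canonical isomorphisms $\mathcal{C}^{\otimes (k+l)} \xrightarrow{\sim} \mathcal{C}^{\otimes k} \otimes_{S} \mathcal{C}^{\otimes l}$ and $\mathcal{C}^{\otimes 0} \xrightarrow{\sim} I_{\Ob(\mathcal{C})}$. Conversely, suppose $(X,S)$ is a templicial object with $X: \fint^{op} \to \mathcal{V}\Quiv_{S}$ strong monoidal. Via the monoidal equivalence $\asimp \simeq \fint^{op}$ of \S\ref{subsection: Notations and conventions}, the equivalence \eqref{equation: universal monoid} applied to $\mathcal{W} = \mathcal{V}\Quiv_{S}$ identifies $X$ with a monoid in $\mathcal{V}\Quiv_{S}$, that is, a small $\mathcal{V}$-category $\mathcal{C}_{X}$ with $\Ob(\mathcal{C}_{X}) = S$ and $\mathcal{C}_{X} = X_{1}$. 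Tracing through the identification, this monoid is exactly the one obtained by viewing $(N_{\mathcal{V}}\mathcal{C}_{X}, S)$ as a strong monoidal functor, so $N_{\mathcal{V}}(\mathcal{C}_{X}) \simeq X$ canonically in $\ts\mathcal{V}$. The only delicate point is to check that this isomorphism at the level of colax monoidal functors upgrades to an isomorphism of templicial objects (i.e.\ the underlying map of vertex sets is the identity on $S$), which is transparent since the construction preserves the vertex set, and this is the main (though routine) bookkeeping obstacle.
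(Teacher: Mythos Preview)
Your proposal is correct and follows essentially the same approach as the paper: both use Lemma~\ref{lemma: nerve unique extension} applied to $X = N_{\mathcal{V}}(\mathcal{C})$ to identify templicial morphisms $N_{\mathcal{V}}(\mathcal{C})\to N_{\mathcal{V}}(\mathcal{D})$ with $\mathcal{V}$-functors (since the diagrams \eqref{diagram: nerve unique extension} collapse to the composition and identity axioms), and both invoke the equivalence \eqref{equation: universal monoid} for the essential image. Your treatment is slightly more explicit about functoriality via uniqueness and about the bookkeeping for the vertex set in the essential image, but these are just elaborations of points the paper leaves implicit.
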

\begin{proof}
Let $\mathcal{C}$ and $\mathcal{D}$ be small $\mathcal{V}$-enriched categories, $f: \Ob(\mathcal{D})\rightarrow \Ob(\mathcal{C})$ a map of sets and $H: \mathcal{D}\rightarrow f^{*}(\mathcal{C})$ a morphism in $\mathcal{V}\Quiv_{\Ob(\mathcal{D})}$. Then the diagrams (\ref{diagram: nerve unique extension}) with $X = N_{\mathcal{V}}(\mathcal{D})$ precisely express that $(H,f)$ is a $\mathcal{V}$-functor $\mathcal{D}\rightarrow\mathcal{C}$, and thus we have a bijection $\mathcal{V}\Cat(\mathcal{D},\mathcal{C})\simeq \ts\mathcal{V}(N_{\mathcal{V}}(\mathcal{D}),N_{\mathcal{V}}(\mathcal{C}))$. More precisely, the templicial morphism $N_{\mathcal{V}}(H)$ corresponding to some $\mathcal{V}$-functor $H: \mathcal{D}\rightarrow \mathcal{C}$ is given by
$$
N_{\mathcal{V}}(H)_{n}: f_{!}(\mathcal{D}^{\otimes n})\rightarrow f_{!}(\mathcal{D})^{\otimes n}\xrightarrow{N_{\mathcal{V}}(H)^{\otimes n}_{1}} \mathcal{C}^{\otimes n}
$$
for all $n\geq 0$, where $N_{\mathcal{V}}(H)_{1}: f_{!}(\mathcal{D})\rightarrow \mathcal{C}$ is adjoint to $H: \mathcal{D}\rightarrow f^{*}(\mathcal{D})$. Thus clearly this defines a functor which is necessarily fully faithful. The characterisation of the essential image follows immediately from \eqref{equation: universal monoid}.
\end{proof}

\begin{Rem}\label{remark: temp. nerve generalizes classical nerve}
Note that in case $(\mathcal{V},\otimes,I) = (\Set,\times,\{*\})$, the templicial nerve functor $N_{\mathcal{V}}: \mathcal{V}\Cat\rightarrow \ts\mathcal{V}$ clearly recovers the classical nerve functor $N: \Cat\rightarrow \SSet$.
\end{Rem}

The adjunction $F\dashv U$ induces an adjunction between small categories and small $\mathcal{V}$-enriched categories, whiich we denote by
\[\begin{tikzcd}
	\Cat & {\mathcal{V}\Cat}
	\arrow[""{name=0, anchor=center, inner sep=0}, "{\mathcal{F}}", shift left=2, from=1-1, to=1-2]
	\arrow[""{name=1, anchor=center, inner sep=0}, "{\mathcal{U}}", shift left=2, from=1-2, to=1-1]
	\arrow["\dashv"{anchor=center, rotate=-90}, draw=none, from=0, to=1]
\end{tikzcd}\]

\begin{Prop}\label{proposition: nerve commutativity results}
We have natural isomorphisms
$$
N_{\mathcal{V}}\circ \mathcal{F}\simeq \tilde{F}\circ N\quad \text{and}\quad \tilde{U}\circ N_{\mathcal{V}}\simeq N\circ \mathcal{U}
$$
\end{Prop}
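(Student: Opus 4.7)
The plan is to deduce both isomorphisms from the universal-monoid equivalence \eqref{equation: universal monoid} together with the fully faithfulness of $N_{\mathcal{V}}$ established in Proposition \ref{proposition: nerve fully faithful}.

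For the first isomorphism, I would use that $F: \Set \to \mathcal{V}$ is strong monoidal and preserves coproducts, so it induces a strong monoidal functor $F_{*}: \Quiv_{\Ob(\mathcal{C})} \to \mathcal{V}\Quiv_{\Ob(\mathcal{C})}$ (sending a quiver $Q$ to $F\circ Q$, using strong monoidality to identify $F(Q \otimes_{\Ob(\mathcal{C})} Q')$ with $F_{*}(Q) \otimes_{\Ob(\mathcal{C})} F_{*}(Q')$). The equivalence \eqref{equation: universal monoid} is natural in the ambient monoidal category under such strong monoidal functors: given a monoid $M$ in $\Quiv_{\Ob(\mathcal{C})}$, the strong monoidal functor $\fint^{op} \to \mathcal{V}\Quiv_{\Ob(\mathcal{C})}$ associated to the monoid $F_{*}(M)$ is precisely $F_{*}$ post-composed with the strong monoidal functor associated to $M$. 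Applying this to $M = \mathcal{C}$ (whose associated strong monoidal functor is $N(\mathcal{C})$) yields
\[
N_{\mathcal{V}}(\mathcal{F}(\mathcal{C})) \;=\; N_{\mathcal{V}}(F_{*}(\mathcal{C})) \;\cong\; F_{*}\circ N(\mathcal{C}) \;=\; \tilde{F}(N(\mathcal{C})),
\]
where the last equality uses that $\tilde{F}$, under the identification $\SSet \simeq S_{\times}\Set$ of Proposition \ref{proposition: temp. sets are simp. sets}, is exactly post-composition with $F_{*}$. Naturality in $\mathcal{C}$ then follows from naturality of \eqref{equation: universal monoid}.

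For the second isomorphism, I would reduce to the first by a Yoneda-style computation. Since $\Delta^{n} \cong N([n])$ in $\SSet$ (viewing $[n]$ as a poset), applying the first isomorphism with $\mathcal{C} = [n]$ gives $\tilde{F}(\Delta^{n}) \cong N_{\mathcal{V}}(\mathcal{F}([n]))$. Then for any small $\mathcal{V}$-enriched category $\mathcal{D}$, combining the adjunction $\tilde{F} \dashv \tilde{U}$, full faithfulness of $N_{\mathcal{V}}$, and the adjunction $\mathcal{F}\dashv \mathcal{U}$ yields natural bijections
\[
\tilde{U}(N_{\mathcal{V}}(\mathcal{D}))_{n} \;=\; \ts\mathcal{V}(\tilde{F}(\Delta^{n}), N_{\mathcal{V}}(\mathcal{D})) \;\cong\; \ts\mathcal{V}(N_{\mathcal{V}}(\mathcal{F}([n])), N_{\mathcal{V}}(\mathcal{D})) \;\cong\; \mathcal{V}\Cat(\mathcal{F}([n]), \mathcal{D}) \;\cong\; \Cat([n], \mathcal{U}(\mathcal{D})) \;=\; N(\mathcal{U}(\mathcal{D}))_{n}.
\]
Naturality in $[n]$, which upgrades these bijections to an isomorphism of simplicial sets, is inherited from the naturality of all adjunctions involved.

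The main obstacle I expect is the naturality claim for \eqref{equation: universal monoid} used in the first step: concretely, one must check that the monoid multiplication $M\otimes M\to M$ in $\Quiv_{\Ob(\mathcal{C})}$ extracted from a strong monoidal functor $X: \fint^{op}\to \Quiv_{\Ob(\mathcal{C})}$ (by composing $X(d_{1})$ with the inverse of $X(\mu_{1,1})$) is sent by $F_{*}$ to the corresponding multiplication of $F_{*}M$ extracted from $F_{*} \circ X$, and similarly for the unit. This is a direct verification once one observes that $F_{*}$ is strong monoidal and hence commutes with all the tensor products and units that enter the construction, but it does require tracking coherence isomorphisms carefully. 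Once this naturality is in place, both isomorphisms fall out cleanly.
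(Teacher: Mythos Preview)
Your proposal is correct and follows essentially the same approach as the paper: the first isomorphism is obtained from the fact that $F$ is strong monoidal and colimit-preserving (the paper states this in one line while you unpack the naturality of \eqref{equation: universal monoid} more explicitly), and the second isomorphism is derived via the identical Yoneda-style chain $\tilde{U}(N_{\mathcal{V}}(\mathcal{C}))_{n} \cong \ts\mathcal{V}(N_{\mathcal{V}}(\mathcal{F}([n])),N_{\mathcal{V}}(\mathcal{C})) \cong \mathcal{V}\Cat(\mathcal{F}([n]),\mathcal{C}) \cong \Cat([n],\mathcal{U}(\mathcal{C}))$ using the first isomorphism and Proposition \ref{proposition: nerve fully faithful}.
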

\begin{proof}
The first isomorphism follows from the construction of $N_{\mathcal{V}}$ and the fact that $F$ is strong monoidal and preserves colimits. Further let $\mathcal{C}$ be a small $\mathcal{V}$-category and $n\geq 0$. Then we have isomorphisms, natural in $\mathcal{C}$ and $n$:
\begin{align*}
\tilde{U}(N_{\mathcal{V}}(\mathcal{C}))_{n}&= \ts\mathcal{V}(\tilde{F}(\Delta^{n}),N_{\mathcal{V}}(\mathcal{C})) \simeq \ts\mathcal{V}(N_{\mathcal{V}}(\mathcal{F}([n])),N_{\mathcal{V}}(\mathcal{C}))\\
&\simeq \mathcal{V}\Cat(\mathcal{F}([n]),\mathcal{C})\simeq \Cat([n],\mathcal{U}(\mathcal{C}))\simeq N(\mathcal{U}(\mathcal{C}))_{n}
\end{align*}
where we subsequently used the first isomorphism and Proposition \ref{proposition: nerve fully faithful}.
\end{proof}

\subsection{The Eilenberg-Zilber lemma}\label{subsection: Eilenberg-Zilber}

Recall the classical Eilenberg-Zilber lemma for simplicial sets  \cite[(8.3)]{eilenberg1950semi-simplicial}. It states that for any $n$-simplex $x$ of a simplicial set $K$, there is a unique non-degerate $k$-simplex $y$ of $K$ and a unique surjective map $\sigma: [n]\twoheadrightarrow [k]$ in $\simp$ such that $x = K(\sigma)(y)$. Equivalently, there exists a bijection
$$
K_{n}\simeq \coprod_{\substack{\sigma: [n]\twoheadrightarrow [k]\\ \text{in }\simp_{surj}}}\!\!\!K^{nd}_{k}
$$
where $K^{nd}_{k}\subseteq K_{k}$ denotes the subset of non-degenerate $k$-simplices of $K$, and $\simp_{surj}\subseteq \simp$ is the subcategory of surjective maps (see \S\ref{subsection: Notations and conventions}).

The analoguous statement for templicial objects (Lemma \ref{lemma: temp. Eilenberg-Zilber}) also holds, but this requires an extra condition to ensure that they have a well-behaved notion of non-degenerate simplices. We will make use of this lemma when we reformulate the left-adjoint of the templicial homotopy coherent nerve in \S\ref{subsection: Simplification of the categorification functor}.

\begin{Def}\label{definition: latching obj. of a temp. obj.}
Consider a functor $X: \simp^{op}_{surj}\rightarrow \mathcal{W}$ with $\mathcal{W}$ a cocomplete category. For every integer $n\geq 0$, we denote
$$
X^{deg}_{n} = \colim_{\substack{\sigma: [n]\twoheadrightarrow [k]\\ \text{in }\simp_{surj}\\ 0\leq k < n}}X_{k}
$$
Note that we have a canonical morphism $X^{deg}_{n}\rightarrow X_{n}$ in $\mathcal{W}$.

Let $(X,S)$ be a templicial object and consider the restricted functor $X\vert_{\simp^{op}_{surj}}: \simp^{op}_{surj}\rightarrow \mathcal{V}\Quiv_{S}$. For $n\geq 0$, we call $X^{deg}_{n}$ the \emph{quiver of degenerate $n$-simplices} of $X$. We say $X$ \emph{has non-degenerate simplices} if for every $n\geq 0$, the quiver morphism $X^{deg}_{n}\rightarrow X_{n}$ is isomorphic to a coprojection
$$
X^{deg}_{n}\rightarrow X^{deg}_{n}\amalg N
$$
for some $N\in \mathcal{V}\Quiv_{S}$. In this case, we'll often denote $N$ by $X^{nd}_{n}$. When considering an abstract templicial object that has non-degenerate simplices, we implicitly assume a choice for $X^{nd}_{n}$ in each dimension has been made. Note that $X^{deg}_{0} = 0$ and so we always have $X^{nd}_{0}\simeq X_{0}$.
\end{Def}

\begin{Ex}
Let $(X,S)$ be a templicial object and suppose the underlying functor $X\vert_{\simp^{op}_{surj}}: \simp^{op}_{surj}\rightarrow \mathcal{V}\Quiv_{S}$ is isomorphic to $FZ$ for some $Z: \simp^{op}_{surj}\rightarrow \Quiv_{S}$ with $Z^{deg}_{n}(a,b)\rightarrow Z_{n}(a,b)$ injective for all $a,b\in S$ and $n\geq 0$. Then $X$ has non-degenerate simplices. Indeed, simply set
$$
X^{nd}_{n}(a,b) = F(Z_{n}(a,b)\setminus Z^{deg}_{n}(a,b))
$$
In particular, for any a simplicial set $K$, the templicial object $\tilde{F}(K)$ has non-degenerate simplices.
\end{Ex}

Certainly not every templicial object has non-degenerate simplices, as the following example shows.

\begin{Ex}\label{example: temp. obj. with ill-defined non-deg. simplices}
Consider the monoidal category $\mathcal{V} = \Ab$ of abelian groups with the tensor product. Let $S = \{*\}$ be a singleton and define a functor $X: \fint^{op}\rightarrow \Ab$ by setting $X_{n} = \mathbb{Z}$ for all $n\geq 0$ with
$$
s_{0}: X_{0} = \mathbb{Z}\xrightarrow{2\cdot} X_{1} = \mathbb{Z}
$$
and all other face and degeneracy maps given by the identity on $\mathbb{Z}$. Then $X$ is a strongly unital, colax monoidal functor with comultiplication map $\mu_{k,l}$ for $k,l\geq 0$ given by
$$
\mu_{k,l}: X_{k+l} = \mathbb{Z}\rightarrow X_{k}\otimes X_{l}\simeq \mathbb{Z}: z\mapsto
\begin{cases}
2z & \text{if }k,l > 0\\
z & \text{if }k = 0\text{ or }l = 0
\end{cases}
$$
We thus find a templicial abelian group $(X,S)$ for which $X^{deg}_{1}\rightarrow X_{1}$ is given by the inclusion $2\mathbb{Z}\subseteq \mathbb{Z}$ which doesn't have a direct complement.
\end{Ex}

\begin{Lem}\label{lemma: temp. Eilenberg-Zilber}
Let $X$ be a templicial object and assume it has non-degenerate simplices. For any integer $n\geq 0$, we have an isomorphism of quivers:
$$
X_{n}\simeq \coprod_{\substack{\sigma: [n]\twoheadrightarrow [k]\\ \text{in }\simp_{surj}}}\!\!\!X^{nd}_{k}
$$
\end{Lem}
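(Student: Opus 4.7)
The plan is to prove the statement by strong induction on $n$. For each $n \geq 0$, consider the natural quiver morphism
$$
\Phi_{n}: \coprod_{\substack{\sigma: [n]\twoheadrightarrow [k]\\ \text{in }\simp_{surj}}} X^{nd}_{k} \to X_{n}
$$
whose $\sigma$-th component is the composite $X^{nd}_{k} \hookrightarrow X_{k} \xrightarrow{X(\sigma)} X_{n}$, and show it is an isomorphism. The base case $n=0$ is immediate since $X^{deg}_{0}$ is an empty colimit, so $X_{0}\simeq X^{nd}_{0}$, and the only surjection out of $[0]$ is the identity.

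For the inductive step, use the splitting $X_{n} \simeq X^{deg}_{n} \amalg X^{nd}_{n}$ and isolate the $\sigma = \id$ summand of the coproduct on the left. It then suffices to construct an isomorphism
$$
X^{deg}_{n} \;\simeq\; \coprod_{\substack{\sigma: [n]\twoheadrightarrow [k]\\ \text{in }\simp_{surj},\, k<n}} X^{nd}_{k}.
$$
The inductive hypothesis provides isomorphisms $X_{k} \simeq \coprod_{\tau: [k]\twoheadrightarrow [\ell]} X^{nd}_{\ell}$ for all $k<n$, and these assemble into a natural isomorphism of functors on $\simp^{op}_{surj}$: given a surjection $\eta: [k']\to [k]$, the map $\eta^{*}: X_{k}\to X_{k'}$ corresponds, under the isomorphisms, to the coproduct map sending the $\tau$-summand to the $(\tau\eta)$-summand, as is verified directly from the definition of each $\Phi_{k}$ via the identity $\eta^{*}\tau^{*} = (\tau\eta)^{*}$. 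Substituting into the colimit defining $X^{deg}_{n}$ and commuting the colimit past the coproduct yields
$$
X^{deg}_{n} \;\simeq\; \colim_{(\sigma,\tau)\in \mathcal{P}} X^{nd}_{\ell},
$$
where $\mathcal{P}$ is the Grothendieck-type indexing category with objects pairs $(\sigma: [n]\twoheadrightarrow [k],\, \tau: [k]\twoheadrightarrow [\ell])$ such that $k<n$, and morphisms $(\sigma,\tau)\to (\sigma',\tau')$ given by surjections $\eta: [k']\to [k]$ satisfying $\eta\sigma' = \sigma$ and $\tau\eta = \tau'$.

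The key step is to reduce this colimit to a coproduct. Every morphism in $\mathcal{P}$ preserves the composite, since $\tau'\sigma' = \tau\eta\sigma' = \tau\sigma$, so $\mathcal{P}$ decomposes as a disjoint union of fibers $\mathcal{P}_{\rho}$ indexed by the non-identity surjections $\rho: [n]\twoheadrightarrow [\ell]$. For each such $\rho$, the pair $(\rho, \id_{[\ell]})$ is an initial object of $\mathcal{P}_{\rho}$: given any $(\sigma,\tau)\in \mathcal{P}_{\rho}$, the morphism $\eta = \tau$ satisfies $\eta\sigma = \tau\sigma = \rho$ and $\id\cdot\eta = \tau$, providing the unique morphism $(\rho,\id) \to (\sigma,\tau)$. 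Since the functor is constant with value $X^{nd}_{\ell}$ on each $\mathcal{P}_{\rho}$, the colimit over $\mathcal{P}_{\rho}$ equals $X^{nd}_{\ell}$, and the total colimit is the desired coproduct $\coprod_{\rho} X^{nd}_{\ell}$.

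The main obstacle is purely bookkeeping: correctly tracking arrow directions to form the Grothendieck construction $\mathcal{P}$, and ensuring that the inductive isomorphisms $X_{k}\simeq \coprod_{\tau} X^{nd}_{\ell}$ really are natural in $\simp^{op}_{surj}$. Once these are in hand, the presence of the initial object $(\rho,\id)$ in each fiber makes the final colimit computation immediate, and the result follows.
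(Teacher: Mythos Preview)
Your proof is correct and follows essentially the same inductive strategy as the paper: split off $X^{nd}_{n}$, substitute the inductive decomposition of each $X_{k}$ into the colimit defining $X^{deg}_{n}$, and then reduce the resulting double-indexed colimit to a coproduct by observing that each fiber $\mathcal{P}_{\rho}$ is connected (indeed has the distinguished object $(\rho,\id_{[\ell]})$). Your bookkeeping is in fact slightly sharper than the paper's: you correctly identify $(\rho,\id_{[\ell]})$ as initial in $\mathcal{P}_{\rho}$, whereas the paper names the factorisation $[n]\overset{=}{\twoheadrightarrow}[n]\overset{\sigma}{\twoheadrightarrow}[\ell]$ as a terminal object, which strictly speaking lies outside the indexing category because of the constraint $k<n$ (the argument still goes through with your object instead).
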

\begin{proof}
By definition, $X_{0} = X^{nd}_{0}$. Take $n > 0$, then it follows by induction that
\begin{align*}
X_{n} &\simeq X^{nd}_{n}\amalg X^{deg}_{n} = X^{nd}_{n}\amalg \colim_{\substack{[n]\twoheadrightarrow [k]\\ 0\leq k < n}}X_{k}\simeq X^{nd}_{n}\amalg \colim_{\substack{[n]\twoheadrightarrow [k]\\ 0\leq k < n}}\coprod_{\sigma: [k]\twoheadrightarrow [l]}X^{nd}_{l}\\
&\simeq X^{nd}_{n}\amalg \coprod_{\substack{\sigma: [n]\twoheadrightarrow [l]\\ 0\leq l < n}}\colim_{\substack{[n]\underset{\sigma_{1}}{\twoheadrightarrow} [k]\underset{\sigma_{2}}{\twoheadrightarrow} [l]\\ \sigma = \sigma_{2}\sigma_{1}}}\!\!\!X^{nd}_{l}\simeq X^{nd}_{n}\amalg \coprod_{\substack{\sigma: [n]\twoheadrightarrow [l]\\ 0\leq l < n}}X^{nd}_{l}
\end{align*}
The last isomorphism is obtained by noting that the colimit on the left hand side is taken over a category which has a terminal object given by the factorisation $[n]\overset{=}{\twoheadrightarrow} [n]\overset{\sigma}{\twoheadrightarrow} [l]$.
\end{proof}

\section{Necklaces and necklace categories}\label{section: Necklaces and necklace categories}

Necklaces were first introduced by Baues \cite{baues1980geometry} and popularised by Dugger and Spivak in \cite{dugger2011rigidification}. Their category $\nec$ will play a crucial role in what follows. Morally a necklace is simply a sequence of simplices glued together at vertices. In view of Remark \ref{remark: interpretation of comult. maps}, necklaces appear naturally when applying the comultiplication morphism $\mu_{k,l}$ of a templicial object $X$. In this way, maps between necklaces parametrise the degeneracy and inner face morphisms of $X$, as well as its comultiplication morphisms. This change in perspective leads us to considering the category $\mathcal{V}\Cat_{\nec}$ of small categories enriched in $\mathcal{V}^{\nec^{op}}$. We call these necklace categories and show in \S\ref{subsection: Necklace categories} that we can recover $\ts\mathcal{V}$ as a coreflective subcategory of $\mathcal{V}\Cat_{\nec}$ (see Theorem \ref{theorem: nec functor fully faithful left-adjoint}).

\subsection{Necklaces}\label{subsection: Necklaces}

We quickly recall the definition of a necklace and give a new combinatorial description of the category $\nec$ of necklaces in Proposition \ref{proposition: combinatorial charac. of nec}.

\begin{Def}
We denote by $\SSet_{*,*} = (\partial\Delta^{1}\downarrow \SSet)$ the category of \emph{bipointed simplicial sets}. Its objects can be identified with tuples $(K,a,b)$ where $K$ is a simplicial set and $a,b\in K_{0}$ are called the \emph{distinguished points} of $K$. We will also denote $K_{a,b} = (K,a,b)$. A morphism $K_{a,b}\rightarrow L_{c,d}$ in $\SSet_{*,*}$ is a simplicial map $f: K\rightarrow L$ such that $f(a) = c$ and $f(b) = d$.

Let $K_{a,b}$ and $L_{c,d}$ be bipointed simplicial sets. The \emph{wedge sum} $K\vee L$ of $K$ and $L$ is constructed by glueing $K$ and $L$ at the distinguished points $b$ and $c$. More precisely, $K\vee L$ is given by the coequaliser
\[\begin{tikzcd}
	{\Delta^{0}} & {K\amalg L} & {K\vee L}
	\arrow["b", shift left=1, from=1-1, to=1-2]
	\arrow["c"', shift right=1, from=1-1, to=1-2]
	\arrow[two heads, from=1-2, to=1-3]
\end{tikzcd}\]
We consider $K\vee L$ again as bipointed with distinguished points $(a,d)$.
\end{Def}

\begin{Rem}
It is not difficult to verify that the wedge $\vee$ is a monoidal product on the category of bipointed simplicial sets $\SSet_{*,*}$ whose unit is given by $\Delta^{0}$.
\end{Rem}

\begin{Def}\label{definition: necklace}
For any $n\geq 0$, we consider the standard simplex $\Delta^{n}$ as bipointed with distinguished points $0$ and $n$. A \emph{necklace} $T$ is an iterated wedge of standard simplices. That is,
$$
T = \Delta^{n_{1}}\vee ...\vee \Delta^{n_{k}}\in \SSet_{*,*}
$$
for some $k\geq 0$ and $n_{1},...,n_{k} > 0$ (if $k = 0$, then $T = \Delta^{0}$). We refer to the standard simplices $\Delta^{n_{1}},...,\Delta^{n_{k}}$ as the \emph{beads} of $T$. The distinguished points in every bead are called the \emph{joints} of $T$.

We let $\nec$ denote the full subcategory of $\SSet_{*,*}$ spanned by all necklaces. By construction, $(\nec,\vee,\Delta^{0})$ is again a monoidal category.
\end{Def}

\begin{Prop}\label{proposition: combinatorial charac. of nec}
The category of necklaces $\nec$ is equivalent to the category defined as follows:

The objects are pairs $(T,p)$ with $p\geq 0$ and $\{0 < p\}\subseteq T\subseteq [p]$. The morphisms $(T,p)\rightarrow (U,q)$ are morphisms $f: [p]\rightarrow [q]$ in $\fint$ such that $U\subseteq f(T)$, with compositions and identities defined as in $\fint$.

Moreover, under this equivalence, the wedge $\vee$ corresponds to
$$
(T,p)\vee (U,q) = (T\cup (p + U), p + q)
$$
where $p + U = \{p + u\mid u\in U\}$.
\end{Prop}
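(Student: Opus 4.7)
The plan is to construct a functor $\Phi$ from $\nec$ to the combinatorial category of the statement (call it $\mathcal{N}$) by encoding each necklace as its vertex set together with its set of joints, and then to show that $\Phi$ is an equivalence by analysing necklace morphisms in terms of their action on vertices. On objects I send $T = \Delta^{n_{1}}\vee\dots\vee\Delta^{n_{k}}$ to the pair $(J_{T},p_{T})$ where $p_{T}=n_{1}+\dots+n_{k}$ and $J_{T}=\{0,n_{1},n_{1}+n_{2},\dots,p_{T}\}\subseteq [p_{T}]$. Any combinatorial object $(T,p)$ clearly arises this way (read off bead lengths from consecutive differences in $T$), so $\Phi$ is essentially surjective; moreover when $p=0$ the condition $\{0<p\}\subseteq T$ is vacuous and corresponds to the empty wedge $\Delta^{0}$.

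On morphisms, the key observation is that every simplex of a necklace $U$ lies entirely within a single bead: non-degenerate simplices are, by construction of the wedge, simplices of individual beads, and the degeneracy and face maps preserve which bead a simplex belongs to. Consequently any simplicial map $f\colon T\to U$ between necklaces is determined by its restriction to vertices (because each bead of $T$ is a standard simplex, hence co-represents the $n$-simplex functor), and this restriction $\bar f\colon [p_{T}]\to [p_{U}]$ is automatically weakly order-preserving. Since $f$ is bipointed, $\bar f(0)=0$ and $\bar f(p_{T})=p_{U}$, so $\bar f$ is a morphism of $\fint$. This defines $\Phi$ on morphisms and shows $\Phi$ is faithful.

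Next, I characterise which vertex maps arise from necklace maps. A weakly order-preserving $\bar f\colon [p_{T}]\to [p_{U}]$ in $\fint$ comes from a (necessarily unique) simplicial map $T\to U$ iff each bead of $T$ maps into a single bead of $U$, because then we may assemble $f$ bead-by-bead from the order-preserving maps $[n_{i}]\to [\text{bead}]$ and check they agree at joints. I claim this condition is equivalent to $J_{U}\subseteq \bar f(J_{T})$. For one direction: if $u\in J_{U}\setminus \bar f(J_{T})$, then $u$ lies strictly between $\bar f(j_{i})$ and $\bar f(j_{i+1})$ for some consecutive joints of $T$, so the bead $[j_{i},j_{i+1}]$ of $T$ would have to map across the joint $u$ of $U$, contradicting that beads map into beads. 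Conversely, if $\bar f$ sent some bead of $T$ across a joint $u$ of $U$, then $u$ would lie strictly between the $\bar f$-images of two consecutive joints of $T$, and order-preservation would prevent $u$ from lying in $\bar f(J_{T})$. This simultaneously establishes fullness of $\Phi$ and matches the morphisms of $\mathcal{N}$ precisely.

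Finally, the monoidal structure is straightforward to verify once the object correspondence is in hand. Wedging $T=\Delta^{n_{1}}\vee\dots\vee\Delta^{n_{k}}$ with $U=\Delta^{m_{1}}\vee\dots\vee\Delta^{m_{l}}$ identifies the last distinguished vertex of $T$ with the first of $U$, producing a necklace with $p_{T}+p_{U}$ top-indexed vertex (hence underlying $[p_{T}+p_{U}]$) and joints $J_{T}\cup(p_{T}+J_{U})$, which matches the stated formula $(T,p)\vee(U,q)=(T\cup(p+U),p+q)$; a short check shows $\Phi$ respects the associativity and unit constraints. The main obstacle I expect is the rigidity step: justifying carefully that a morphism in $\nec$ is determined by and reconstructible from its vertex action, which rests on the fact that simplices of a necklace do not span more than one bead. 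Once that is in place, the combinatorial translation of the bead-preservation constraint into ``$J_{U}\subseteq \bar f(J_{T})$'' is essentially bookkeeping.
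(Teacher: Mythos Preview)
Your proposal is correct and follows essentially the same approach as the paper: identify a necklace with its set of joints inside $[p]$, observe that a necklace map is determined by its action on vertices (yielding a morphism in $\fint$), and then argue that such a vertex map underlies a necklace map precisely when $U\subseteq f(T)$, using the fact that each bead of $T$ must land in a single bead of $U$. The paper's argument for the forward direction is phrased in terms of an edge of $T$ having no possible image, while you phrase it via the bead-preservation constraint, but the content is identical.
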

\begin{proof}
It is clear that we may identify a necklace $T = \Delta^{n_{1}}\vee ...\vee \Delta^{n_{k}}$ with the subset $T = \{0 < n_{1} < n_{1} + n_{2} < ... < p\}\subseteq [p]$ where $p = n_{1} + ... + n_{k}$, and we will do so for the rest of this proof. Note that under this identification, $[p]$ is the set of vertices and $T$ is the set of joints of the necklace. Further, a necklace map $T\rightarrow U$ is completely determined on vertices, which is a morphism $[p]\rightarrow [q]$ in $\fint$. It remains to show that a morphism $f: [p]\rightarrow [q]$ in $\fint$ is the vertex map of a necklace map $T\rightarrow U$ if and only if $U\subseteq f(T)$.

Suppose $f$ is the vertex map of some necklace map $T\rightarrow U$. Assume that there exists a $u\in U\setminus f(T)$. Then we may choose subsequent joints $t < t'$ in $T$ such that $f(t) < u < f(t')$. Now the unique edge in $T$ between $t$ and $t'$ must be sent to an edge in $U$ between $f(t)$ and $f(t')$. But there is no such edge. Conversely, assume $U\subseteq f(T)$. We may write $T = \{0 = t_{0} < t_{1} < ... < t_{k-1} < t_{k} = p\}$ and $U = \{0 = u_{0} < u_{1} < ... < u_{l-1} < u_{l} = q\}$. Then we have $f = f_{1} + ... + f_{k}$ for some unique $f_{i}: [t_{i} - t_{i-1}]\rightarrow [f(t_{i}) - f(t_{i-1})]$ in $\fint$. Fixing $i\in \{1,...,k\}$, there is a unique $j\in \{1,...,l\}$ such that $u_{j-1}\leq f(t_{i-1})\leq f(t_{i})\leq u_{j}$. So we can extend $f_{i}$ to an order morphism $[t_{i} - t_{i-1}]\rightarrow [u_{j} - u_{j-1}]$, which induces a simplicial map $\Delta^{t_{i} - t_{i-1}}\rightarrow \Delta^{u_{j} - u_{j-1}}\rightarrow U$. These maps combine to give a map of necklaces $T\rightarrow U$ whose vertex map is $f$.

Clearly, this correspondence is functorial and preserves the wedge sum $\vee$.
\end{proof}

Henceforth, we will identify $\nec$ with the category described in Proposition \ref{proposition: combinatorial charac. of nec}. So we will also use the notation
$$
T = \{0 = t_{0} < t_{1} < t_{2} < ...  < t_{k} = p\}
$$
to refer to the necklace $\Delta^{t_{1}}\vee \Delta^{t_{2}-t_{1}}\vee ...\vee \Delta^{p-t_{k-1}}$. We will often refer to a necklace $(T,p)$ just by its underlying set of joints $T$.

\begin{Def}\label{definition: active and inert necklace map}
Let $f: (T,p)\rightarrow (U,q)$ be a map of necklaces. We say $f$ is \emph{inert} if $p = q$ and $f = \id_{[p]}$. We say $f$ is \emph{active} if $f(T) = U$.
\end{Def}

\begin{Rem}
Note that every necklace map $f: (T,p)\rightarrow (U,q)$ can be uniquely factored as an active necklace map $(T,p)\rightarrow (f(T),q)$ followed by an inert necklace map $(f(T),q)\rightarrow (U,q)$. In fact, it is easy to see that the active and inert necklace maps form an (orthogonal) factorisation system on $\nec$ in the sense of \cite{bousfield1976constructions}.
\end{Rem}

\begin{Rem}\label{remark: simplices and spines in cat. of necklaces}
A simplex $\Delta^{n}$, considered as a necklace with a single bead, is represented in $\nec$ by the pair $(\{0 < n\},n)$. On the other hand, the necklace $([n],n)$ represents the spine of $\Delta^{n}$, that is the union of the edges $0\rightarrow 1\rightarrow ... \rightarrow n$ in $\Delta^{n}$.

More generally, for any necklace $(T,p)$ we can consider $([p],p)$, which is the spine passing through all the vertices of $T$. Note that there is a unique inert necklace map $([p],p)\rightarrow (T,p)$ which represents the inclusion of the spine into $T$. Further, there is a unique order isomorphism $[k]\simeq T$ where $k$ is the number of beads of $T$. Thus there is a unique active map $([k],k)\rightarrow (T,p)$, which is the inclusion of the spine passing through all the joints of $T$.
\end{Rem}

\subsection{Necklace categories}\label{subsection: Necklace categories}

Consider the category $\mathcal{V}^{\nec^{op}}$ of functors $\nec^{op}\rightarrow \mathcal{V}$. As $\nec^{op}$ and $\mathcal{V}$ are both monoidal categories, we can endow $\mathcal{V}^{\nec^{op}}$ with the (non-symmetric) monoidal structure given by Day convolution (see \cite{day1970closed}). We denote the resulting monoidal category by $(\mathcal{V}^{\nec^{op}},\otimes_{Day},\underline{I})$.

Given two functors $X,Y: \nec^{op}\rightarrow \mathcal{V}$, their Day convolution $X\otimes_{Day} Y$ is obtained by the left Kan extension of the composite
$$
\nec^{op}\times \nec^{op}\xrightarrow{X\times Y} \mathcal{V}\times \mathcal{V}\xrightarrow{-\otimes -} \mathcal{V}
$$
along $\vee: \nec^{op}\times \nec^{op}\rightarrow \nec^{op}$:
$$
X\otimes_{Day} Y = \Lan_{\vee}(X(-)\otimes Y(-))
$$

Further, the monoidal unit of $\mathcal{V}^{\nec^{op}}$ is given by the representable functor on the monoidal unit $\{0\}$ of $\nec$. As $\{0\}$ is also the terminal object of $\nec$, we find that $F(\nec(-,\{0\}))\simeq \underline{I}$ is the constant functor on $I$, the monoidal unit of $\mathcal{V}$.

\begin{Def}\label{definition: necklace cats.}
Consider the category
$$
\mathcal{V}\Cat_{\nec} = \mathcal{V}^{\nec^{op}}\text{-}\Cat
$$
of small categories enriched in the monoidal category $(\mathcal{V}^{\nec^{op}},\otimes_{Day},\underline{I})$. We call the objects of $\mathcal{V}\Cat_{\nec}$ \emph{necklace categories} and its morphisms \emph{necklace functors}.

If $\mathcal{V} = \Set$, we simply write $\Cat_{\nec}$ for $\Set\Cat_{\nec}$.
\end{Def}

\begin{Con}\label{construction: necklace cat. associated to temp. obj.}
We construct a functor
$$
(-)^{nec}: \ts\mathcal{V}\rightarrow \mathcal{V}\Cat_{\nec}
$$
as follows. Let $(X,S)$ be a templicial object. Define
$$
X_{T} = X_{t_{1}}\otimes_{S} ...\otimes_{S} X_{n-t_{k-1}}\in \mathcal{V}\Quiv_{S}
$$
for any necklace $T = \{0 = t_{0} < t_{1} < ... < t_{k} = p\}$,. We also denote the quiver morphism $\mu_{t_{1},t_{2}-t_{1},...,p-t_{k-1}}: X_{p}\rightarrow X_{T}$ by $\mu_{T}$.

This extends to a functor $X^{nec}_{\bullet}: \nec^{op}\rightarrow \mathcal{V}\Quiv_{S}$ as follows. Take a necklace map $f: (T,p)\rightarrow (U,q)$ and write $U = \{0 = u_{0} < u_{1} < ... < u_{l} = q\}$.
\begin{itemize}
\item If $f$ is inert, then $p = q$ and $U\subseteq T$. Then there exist unique necklaces\\ $(T_{i},u_{i}-u_{i-1})$ for $i\in \{1,...,l\}$ such that $T = T_{1}\vee ...\vee T_{l}$. Now set
$$
X(f): X_{U}\xrightarrow{\mu_{T_{1}}\otimes ...\otimes \mu_{T_{l}}} X_{T}
$$
\item If $f$ is active, then there exist unique $f_{i}: [t_{i}-t_{i-1}]\rightarrow [f(t_{i}) - f(t_{i-1})]$ in $\fint$ for all $i\in \{1,...,k\}$ such that $f = f_{1} + ... + f_{k}$. Now set
$$
X(f): X_{U}\simeq X_{f(t_{1})}\otimes_{S} ...\otimes_{S} X_{q - f(t_{k-1})}\xrightarrow{X(f_{1})\otimes ...\otimes X(f_{k})} X_{T}
$$
where the isomorphism is induced by the strong unitality of $X$ and the fact that $U = f(T)$.
\end{itemize}
It follows from the coassociativity of $\mu$ that $X_{\bullet}$ is functorial on inert morphisms, and from the functoriality of $X$ that $X_{\bullet}$ is functorial on active morphisms. Then it follows from the naturality of $\mu$ that $X_{\bullet}$ is functorial on all morphisms.

If we fix vertices $a,b\in S$, then we obtain a functor
$$
X_{\bullet}(a,b): \nec^{op}\rightarrow \mathcal{V}: T\mapsto X_{T}(a,b)
$$
Let now $X^{nec}$ denote the necklace category with $S$ as its object set and $X_{\bullet}(a,b)$ as its hom-object for all $a,b\in S$. The composition $X_{\bullet}(a,b)\otimes_{Day} X_{\bullet}(b,c)\rightarrow X_{\bullet}(a,c)$ for $a,b,c\in S$ is induced by the canonical morphism
$$
X_{T}(a,b)\otimes X_{U}(b,c)\rightarrow X_{T\vee U}(a,c)
$$
and the identities are given by the morphism $\underline{I}\rightarrow X_{\bullet}(a,a)$ for $a\in S$ induced by the isomorphism $I\simeq X_{0}(a,a) = X_{\{0\}}(a,a)$.

This clearly extends to a functor $(-)^{nec}: \ts\mathcal{V}\rightarrow \mathcal{V}\Cat_{\nec}$.
\end{Con}

\begin{Not}\label{notation: generating necklace maps}
As in $\simp$, we distinguish some special maps in $\nec$.
\begin{itemize}
\item For any $0 < j < n$, we write
$$
\delta_{j}: \{0 < n-1\}\rightarrow \{0 < n\}
$$
for the active necklace map whose underlying morphism in $\fint$ is the coface map $\delta_{j}: [n-1]\rightarrow [n]$ , i.e. $\delta_{j}(i) = i$ if $i < j$ and $\delta_{j}(i) = j+1$ if $j\geq i$.
\item For any $k,l > 0$, we write
$$
\nu_{k,l}: \{0 < k < k+l\}\rightarrow \{0 < k+l\}
$$
for the unique inert necklace map.
\end{itemize}
\end{Not}

\begin{Con}\label{construction: temp. obj. associated to necklace cat.}
Let $\mathcal{C}$ be a necklace category with set of objects $S$. We construct a templicial object $(\mathcal{C}^{temp},S)$ as follows. For every neckace $T$, we have a $\mathcal{V}$-enriched quiver $\mathcal{C}_{T} = \left(\mathcal{C}_{T}(a,b)\right)_{a,b\in S}$ Then the composition and identities of $\mathcal{C}$ induce quiver morphisms
$$
m_{U,V}: \mathcal{C}_{U}\otimes_{S} \mathcal{C}_{V}\rightarrow \mathcal{C}_{U\vee V}\quad \text{and}\quad u: I_{S}\rightarrow \mathcal{C}_{\{0\}}
$$
for all necklaces $U$ and $V$. Set $\mathcal{C}^{temp}_{0} = I_{S}$ and $p_{0} = u: \mathcal{C}^{temp}_{0}\rightarrow \mathcal{C}_{\{0\}}$. Now let $n > 0$. We inductively define an object $\mathcal{C}^{temp}_{n}\in \mathcal{V}\Quiv_{S}$ along with morphisms $p_{n}$ and $\mu_{k,l}$ as the limit of the following diagram of solid arrows in $\mathcal{V}\Quiv_{S}$:
\begin{equation}\label{diagram: temp. obj. associated to neck. obj. limit}
\begin{tikzcd}
	{\mathcal{C}^{temp}_{n}} && {\underset{\substack{k,l > 0\\ k + l = n}}{\prod}\mathcal{C}^{temp}_{k}\otimes_{S} \mathcal{C}^{temp}_{l}} & {\underset{\substack{r,s,t > 0\\ r + s + t = n}}{\prod}\mathcal{C}^{temp}_{r}\otimes_{S} \mathcal{C}^{temp}_{s}\otimes_{S} \mathcal{C}^{temp}_{t}} \\
	{\mathcal{C}_{\{0 < n\}}} && {\underset{\substack{k,l > 0\\ k + l = n}}{\prod}\mathcal{C}_{\{0 < k\}}\otimes_{S} \mathcal{C}_{\{0 < l\}}} \\
	&& {\underset{\substack{k,l > 0\\ k + l = n}}{\prod}\mathcal{C}_{\{0 < k < k+l\}}}
	\arrow["{p_{n}}"', dashed, from=1-1, to=2-1]
	\arrow["{(\mu_{k,l})_{k,l}}", dashed, from=1-1, to=1-3]
	\arrow["{\prod_{k,l}p_{k}\otimes p_{l}}", from=1-3, to=2-3]
	\arrow["{\prod_{k,l}m_{\{0 < k\},\{0 < l\}}}", from=2-3, to=3-3]
	\arrow["{(\mathcal{C}(\nu_{k,l}))_{k,l}}"', from=2-1, to=3-3]
	\arrow["\beta"', shift right=1, from=1-3, to=1-4]
	\arrow["\alpha", shift left=1, from=1-3, to=1-4]
\end{tikzcd}
\end{equation}
where $\alpha$ and $\beta$ are defined by
$$
\pi_{r,s,t}\alpha = (\id_{r}\otimes \mu_{s,t})\pi_{r,s+t}\quad \text{and}\quad \pi_{r,s,t}\beta = (\mu_{r,s}\otimes \id_{t})\pi_{r+s,t}
$$
For example, $\mathcal{C}^{temp}_{1} = \mathcal{C}_{\{0 < 1\}}$ with $p_{1} = \id_{\mathcal{C}_{\{0,1\}}}$, and $\mathcal{C}^{temp}_{2}$ is the pullback of $m_{\{0 < 1\},\{0 < 1\}}$ and $\mathcal{C}(\nu_{1,1})$. We further set $\mu_{0,n}$ and $\mu_{n,0}$ to be the left and right unit isomorphisms respectively:
$$
\mathcal{C}^{temp}_{n}\xrightarrow{\sim} \mathcal{C}^{temp}_{0}\otimes_{S} \mathcal{C}^{temp}_{n},\quad \mathcal{C}^{temp}_{n}\xrightarrow{\sim} \mathcal{C}^{temp}_{n}\otimes_{S} \mathcal{C}^{temp}_{0}
$$

Further, let $f: [m]\rightarrow [n]$ be a morphism in $\fint$. We define a quiver morphism $\mathcal{C}^{temp}(f): \mathcal{C}^{temp}_{n}\rightarrow \mathcal{C}^{temp}_{m}$ by induction on $m$. Set $\mathcal{C}^{temp}(\id_{[0]})$ to be the identity on $I_S$. If $m > 0$, we let $\mathcal{C}^{temp}(f)$ be the unique morphism satisfying, for all $k,l > 0$ with $k + l = m$:
$$
\mu_{k,l}\mathcal{C}^{temp}(f) = (\mathcal{C}^{temp}(f_{1})\otimes_{S} \mathcal{C}^{temp}(f_{2}))\mu_{p,q}
$$
and 
$$
p_{m}\mathcal{C}^{temp}(f) = \mathcal{C}(f)p_{n}
$$
where $f_{1}: [k]\rightarrow [p]$ and $f_{2}: [l]\rightarrow [q]$ are unique in $\fint$ such that $f_{1} + f_{2} = f$. (Note that in case $m = 1$, the first condition is empty and $\mathcal{C}^{temp}(f)$ is just $\mathcal{C}(f)p_{n}$.)

We have thus constructed a well-defined functor
$$
\mathcal{C}^{temp}: \fint^{op}\rightarrow \mathcal{V}\Quiv_{S}
$$
By construction, $\mathcal{C}^{temp}$ is strongly unital and colax monoidal with comultiplication given by the morphisms $(\mu_{k,l})_{k,l\geq 0}$.
\end{Con}

\begin{Thm}\label{theorem: nec functor fully faithful left-adjoint}
The functor $(-)^{nec}: \ts\mathcal{V}\rightarrow \mathcal{V}\Cat_{\nec}$ is fully faithful and left-adjoint to a functor $(-)^{temp}: \mathcal{V}\Cat_{\nec}\rightarrow \ts\mathcal{V}$ which is given on objects by the assignment $\mathcal{C}\mapsto \mathcal{C}^{temp}$ of Construction \ref{construction: temp. obj. associated to necklace cat.}.
\end{Thm}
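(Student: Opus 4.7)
The plan is to construct $(-)^{temp}$ as a functor on all of $\mathcal{V}\Cat_{\nec}$, exhibit a natural bijection $\mathcal{V}\Cat_{\nec}(X^{nec},\mathcal{C})\cong \ts\mathcal{V}(X,\mathcal{C}^{temp})$ witnessing the adjunction, and finally deduce that $(-)^{nec}$ is fully faithful by showing the unit of this adjunction is an isomorphism.

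First, I would extend the assignment $\mathcal{C}\mapsto \mathcal{C}^{temp}$ to morphisms. Given a necklace functor $(H,f)\colon \mathcal{C}\rightarrow \mathcal{D}$, one defines $H^{temp}_{n}\colon \mathcal{C}^{temp}_{n}\rightarrow f^{*}(\mathcal{D}^{temp}_{n})$ by induction on $n$. The base cases $n=0,1$ are forced by the construction, and for $n>1$ the components $H_{\{0<n\}}p_{n}$ together with the tensor products $H^{temp}_{k}\otimes H^{temp}_{l}$ of the already-defined pieces form a cone over the diagram \eqref{diagram: temp. obj. associated to neck. obj. limit} for $f^{*}(\mathcal{D}^{temp}_{n})$; that this cone is compatible with the parallel pair $\alpha,\beta$ follows from the coassociativity statement inductively established for $H^{temp}$. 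The universal property yields the desired morphism, and functoriality of $(-)^{temp}$ is a routine verification from uniqueness.

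To establish the adjunction, I would construct both directions of the bijection. Given a necklace functor $(H,f)\colon X^{nec}\rightarrow \mathcal{C}$, the morphism $H_{\{0<n\}}\colon X_{n}\rightarrow f^{*}(\mathcal{C}_{\{0<n\}})$, together with the morphisms $(H_{\{0<k\}}\otimes H_{\{0<l\}})\mu^{X}_{k,l}$, form a cone over the diagram defining $f^{*}(\mathcal{C}^{temp}_{n})$: the equalisation of $\alpha,\beta$ holds by coassociativity of $\mu^{X}$, and the bottom compatibility holds because $H$ is a necklace functor (so that $H_{\{0<k<k+l\}} = m_{\{0<k\},\{0<l\}}(H_{\{0<k\}}\otimes H_{\{0<l\}})$ while $X^{nec}(\nu_{k,l}) = \mu^{X}_{k,l}$). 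This induces a templicial morphism $X\rightarrow \mathcal{C}^{temp}$, and naturality/monoidality are then immediate from the universal property. In the reverse direction, given $\alpha\colon X\rightarrow \mathcal{C}^{temp}$, the composites $p_{n}\alpha_{n}$ define the bead components of a necklace functor, and these extend uniquely to all necklaces via composition in $\mathcal{C}$; the axioms of a necklace functor follow from the monoidality of $\alpha$ together with the fact that $p_{n}$ is compatible with $m$ and $\mathcal{C}(\nu_{k,l})$ by construction. The two assignments are mutually inverse because the composition $\mathcal{C}^{temp}\rightarrow (\mathcal{C}^{temp})^{nec,temp}\rightarrow \mathcal{C}^{temp}$ collapses to the identity via the characterising projection $p_{n}$.

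For fully faithfulness of $(-)^{nec}$, it is enough to show that the unit $\eta\colon X\rightarrow (X^{nec})^{temp}$ is an isomorphism. I would argue by induction on $n$ that $\eta_{n}$ is invertible. By definition, $(X^{nec})_{\{0<n\}} = X_{n}$, $(X^{nec})_{\{0<k<k+l\}} = X_{k}\otimes_{S}X_{l}$, and $(X^{nec})(\nu_{k,l}) = \mu^{X}_{k,l}$. By induction, $(X^{nec})^{temp}_{k}\cong X_{k}$ for $k<n$, so the diagram \eqref{diagram: temp. obj. associated to neck. obj. limit} for $(X^{nec})^{temp}_{n}$ becomes exactly the cone $(X_{n},\mu^{X}_{k,l})$, which commutes by coassociativity and naturality of $\mu^{X}$ and is universal since $\mu^{X}_{k,l}$ are already given. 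Hence $X_{n}$ itself is the limit, i.e.\ $\eta_{n}$ is an isomorphism.

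The main obstacle will be bookkeeping: checking at the inductive step that $H^{temp}_{n}$ is compatible with inner faces, degeneracies and the comultiplication all at once, and verifying that the parallel pair $\alpha,\beta$ in \eqref{diagram: temp. obj. associated to neck. obj. limit} correctly enforces coassociativity in the inductive construction. These are conceptually clear but require disciplined use of the universal property at each stage, together with unpacking the strong unitality of the colax functors to handle the boundary cases $k=0$ or $l=0$ in the comultiplication.
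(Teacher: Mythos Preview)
Your overall strategy matches the paper's: establish the adjunction via the universal property of the limit \eqref{diagram: temp. obj. associated to neck. obj. limit} and deduce full faithfulness from the unit being invertible. There is, however, a genuine gap in your construction of the adjoint. Given a necklace functor $(H,f)\colon X^{nec}\rightarrow \mathcal{C}$, you propose $(H_{\{0<k\}}\otimes H_{\{0<l\}})\mu^{X}_{k,l}$ as the cone components into $\prod \mathcal{C}^{temp}_{k}\otimes_{\Ob(\mathcal{C})}\mathcal{C}^{temp}_{l}$. But $H_{\{0<k\}}$ lands in $\mathcal{C}_{\{0<k\}}$, not in $\mathcal{C}^{temp}_{k}$, and for $k>1$ there is no canonical map $\mathcal{C}_{\{0<k\}}\rightarrow \mathcal{C}^{temp}_{k}$ to compose with; so these morphisms do not form a cone over the diagram defining $\mathcal{C}^{temp}_{n}$. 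The fix is exactly what the paper does: construct the templicial morphism $\alpha$ by induction on $n$, taking as cone legs the composite $f_{!}(X_{n})\xrightarrow{f_{!}(\mu^{X}_{k,l})} f_{!}(X_{k}\otimes_{S}X_{l})\rightarrow f_{!}(X_{k})\otimes f_{!}(X_{l})\xrightarrow{\alpha_{k}\otimes\alpha_{l}} \mathcal{C}^{temp}_{k}\otimes \mathcal{C}^{temp}_{l}$ with the already-constructed $\alpha_{k}$, together with $\beta_{n}\colon f_{!}(X_{n})\rightarrow \mathcal{C}_{\{0<n\}}$ adjoint to $H_{\{0<n\}}$.

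Your argument for full faithfulness is correct in outline but the phrase ``is universal since $\mu^{X}_{k,l}$ are already given'' does not explain why $X_{n}$ is the limit rather than merely a cone. The paper isolates the reason: the composition $X^{nec}_{T}\otimes_{S}X^{nec}_{U}\rightarrow X^{nec}_{T\vee U}$ in $X^{nec}$ is an \emph{isomorphism}. Combined with the inductive hypothesis $p_{k}\colon (X^{nec})^{temp}_{k}\xrightarrow{\sim} X_{k}$, this makes the right-hand vertical $\prod m\circ(p_{k}\otimes p_{l})$ in \eqref{diagram: temp. obj. associated to neck. obj. limit} an isomorphism; since $(\mu^{X}_{k,l})_{k,l}$ already equalises $\alpha,\beta$ by coassociativity, the limit collapses to $X_{n}$ with $p_{n}=\id$. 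You should make this step explicit.
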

\begin{proof}
Let $\mathcal{C}$ be a necklace category and define a necklace functor $\varepsilon_{\mathcal{C}}: (\mathcal{C}^{temp})^{nec}\rightarrow \mathcal{C}$ by the quiver morphism
$$
\varepsilon_{\mathcal{C}_{T}}: (\mathcal{C}^{temp})_{T}\xrightarrow{p_{t_{1}}\otimes ...\otimes p_{p-t_{k-1}}} \mathcal{C}_{\{0 < t_{1}\}}\otimes ...\otimes \mathcal{C}_{\{0 < p-t_{k-1}\}}\xrightarrow{m_{\mathcal{C}}} \mathcal{C}_{T}
$$
for any necklace $T = \{0 = t_{0} < t_{1} < ... < t_{k} = p\}$.

Let $(X,S)$ be a templicial object and $H: X^{nec}\rightarrow \mathcal{C}$ a necklace functor with object map $f: S\rightarrow \Ob(\mathcal{C})$. We will construct a templicial morphism $(\alpha,f): (X,S)\rightarrow (\mathcal{C}^{temp},\Ob(\mathcal{C}))$ by induction. Let $\alpha_{0}$ be the canonical quiver morphism $f_{!}(X_{0})\simeq f_{!}(I_{S})\rightarrow  I_{\Ob(\mathcal{C})} = \mathcal{C}^{temp}_{0}$. For $n > 0$, let $\beta_{n}: f_{!}(X_{n}) = f_{!}(X_{\{0 < n\}})\rightarrow \mathcal{C}_{\{0 < n\}}$ be adjoint to $H_{\{0 < n\}}: X_{\{0 < n\}}\rightarrow f^{*}(\mathcal{C}_{\{0 < n\}})$. By Construction \ref{construction: temp. obj. associated to necklace cat.}, we have a unique morphism $\alpha_{n}: f_{!}(X_{n})\rightarrow \mathcal{C}^{temp}_{n}$ such that
$$
p_{n}\alpha_{n} = \beta_{n}
$$
and for all $k,l > 0$ with $k + l = n$, $\mu_{k,l}\circ\alpha_{n}$ is equal to the composite
$$
f_{!}(X_{n})\xrightarrow{f_{!}(\mu^{X}_{k,l})}f_{!}(X_{k}\otimes X_{l})\rightarrow f_{!}(X_{k})\otimes f_{!}(X_{l})\xrightarrow{\alpha_{k}\otimes \alpha_{l}} \mathcal{C}^{temp}_{k}\otimes \mathcal{C}^{temp}_{l}
$$
where we used to colax monoidal structure of $f_{!}$. It follows that $(\alpha,f)$ is a well-defined templicial morphism which is unique such that $\varepsilon_{\mathcal{C}}\circ \alpha^{nec} = H$. Hence, the assignment $\mathcal{C}\mapsto \mathcal{C}^{temp}$ extends to a right-adjoint of $(-)^{nec}$.

Now let $X$ be a templicial object. Since the composition $X^{nec}_{T}\otimes_{S} X^{nec}_{U}\rightarrow X^{nec}_{T\vee U}$ of $X^{nec}$ is an isomorphism for all $T,U\in \nec$, it follows from Construction \ref{construction: temp. obj. associated to necklace cat.} that $p_{n}: (X^{nec})^{temp}_{n}\rightarrow X_{\{0 < n\}} = X_{n}$ is an isomorphism. We thus obtain a natural isomorphism $(-)^{temp}\circ (-)^{nec}\simeq \id_{\ts\mathcal{V}}$ which shows that $(-)^{nec}$ is fully faithful.
\end{proof}

\subsection{Some constructions revisited}\label{subsection: Some constructions revisited}

We show that the functor $\tilde{U}: \ts\mathcal{V}\rightarrow \SSet$ (Proposition \ref{proposition: free-forget adjunction}) and the templicial nerve $N_{\mathcal{V}}: \mathcal{V}\Cat\rightarrow \ts\mathcal{V}$ (Definition \ref{definition: templicial nerve}) factor through the category $\mathcal{V}\Cat_{\nec}$ of necklace categories.

\begin{Not}
By post-composition, the adjunction $F: \Set\leftrightarrows \mathcal{V}: U$ induces an adjunction $F: \Set^{\nec^{op}}\leftrightarrows \mathcal{V}^{\nec^{op}}: U$. Note that as $F$ is strong monoidal and preserves colimits, the induced functor $F: \Set^{\nec^{op}}\rightarrow \mathcal{V}^{\nec^{op}}$ is strong monoidal as well. Therefore, we have an induced adjunction which we denote by
\[\begin{tikzcd}
	{\Cat_{\nec}} & {\mathcal{V}\Cat_{\nec}}
	\arrow[""{name=0, anchor=center, inner sep=0}, "{\mathcal{F}}", shift left=2, from=1-1, to=1-2]
	\arrow[""{name=1, anchor=center, inner sep=0}, "{\mathcal{U}}", shift left=2, from=1-2, to=1-1]
	\arrow["\dashv"{anchor=center, rotate=-90}, draw=none, from=0, to=1]
\end{tikzcd}\]
\end{Not}

\begin{Prop}\label{proposition: free and nec functors commute}
There is diagram of adjunctions
\[\begin{tikzcd}
	{\SSet} && {\ts\mathcal{V}} \\
	\\
	{\Cat_{\nec}} && {\mathcal{V}\Cat_{\nec}}
	\arrow[""{name=0, anchor=center, inner sep=0}, "{\tilde{F}}", shift left=2, from=1-1, to=1-3]
	\arrow[""{name=1, anchor=center, inner sep=0}, "(-)^{nec}"', shift right=2, hook, from=1-3, to=3-3]
	\arrow[""{name=2, anchor=center, inner sep=0}, "{\mathcal{F}}", shift left=2, from=3-1, to=3-3]
	\arrow[""{name=3, anchor=center, inner sep=0}, "(-)^{nec}"', shift right=2, hook, from=1-1, to=3-1]
	\arrow[""{name=4, anchor=center, inner sep=0}, "(-)^{temp}"', shift right=2, from=3-1, to=1-1]
	\arrow[""{name=5, anchor=center, inner sep=0}, "{\mathcal{U}}", shift left=2, from=3-3, to=3-1]
	\arrow[""{name=6, anchor=center, inner sep=0}, "{\tilde{U}}", shift left=2, from=1-3, to=1-1]
	\arrow[""{name=7, anchor=center, inner sep=0}, "(-)^{temp}"', shift right=2, from=3-3, to=1-3]
	\arrow["\dashv"{anchor=center}, draw=none, from=1, to=7]
	\arrow["\dashv"{anchor=center}, draw=none, from=3, to=4]
	\arrow["\dashv"{anchor=center, rotate=-90}, draw=none, from=0, to=6]
	\arrow["\dashv"{anchor=center, rotate=-90}, draw=none, from=2, to=5]
\end{tikzcd}\]
which commutes in the sense that we have natural isomorphisms
$$
(-)^{nec}\circ \tilde{F}\simeq \mathcal{F}\circ (-)^{nec}\quad \text{and}\quad \tilde{U}\circ (-)^{temp}\simeq (-)^{temp}\circ \mathcal{U}
$$
In particular, we have a natural isomorphism
$$
\tilde{U}\simeq (-)^{temp}\circ \mathcal{U}\circ (-)^{nec}
$$
\end{Prop}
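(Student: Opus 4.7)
The plan is to prove the first natural isomorphism directly from the strong monoidality and cocontinuity of $F$, and then deduce the second from uniqueness of right adjoints. The final displayed isomorphism will drop out by using that $(-)^{nec}$ is fully faithful.

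First, I would construct the isomorphism $(-)^{nec}\circ \tilde{F} \simeq \mathcal{F}\circ (-)^{nec}$ by unwinding both sides. For a simplicial set $K$, both sides are necklace categories (enriched in $\mathcal{V}^{\nec^{op}}$) with object set $K_{0}$. Using Construction \ref{construction: necklace cat. associated to temp. obj.} together with the description of $\tilde{F}$ from \S\ref{subsection: Simplicial versus templicial objects}, the hom-object of $(\tilde{F}(K))^{nec}$ at vertices $a,b$ and necklace $T = \{0 = t_{0} < t_{1} < \ldots < t_{k} = p\}$ is
$$
\coprod_{c_{1},\ldots,c_{k-1}\in K_{0}} F(K_{t_{1}}(a,c_{1})) \otimes F(K_{t_{2}-t_{1}}(c_{1},c_{2})) \otimes \cdots \otimes F(K_{p-t_{k-1}}(c_{k-1},b)),
$$
while $\mathcal{F}(K^{nec})_{T}(a,b) = F(K^{nec}_{T}(a,b))$ is the image under $F$ of the analogous coproduct of cartesian products. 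Since $F$ is strong monoidal and preserves colimits (hence coproducts), the canonical comparison map is an isomorphism, natural in $T$. The commutation with composition morphisms and unit morphisms of the two necklace categories follows from the coherence of the strong monoidal structure of $F$ (and the fact that the base-change functors $f_{!}$ and the Day convolution are all built from colimits and tensor products, which $F$ preserves).

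Next, I would obtain $\tilde{U}\circ (-)^{temp} \simeq (-)^{temp}\circ \mathcal{U}$ formally. The left adjoint of $\tilde{U}\circ (-)^{temp}$ is $\mathcal{F}\circ (-)^{nec}$ and the left adjoint of $(-)^{temp}\circ \mathcal{U}$ is $(-)^{nec}\circ \tilde{F}$; both adjunctions exist by Proposition \ref{proposition: free-forget adjunction} and Theorem \ref{theorem: nec functor fully faithful left-adjoint} (plus the evident $\mathcal{F}\dashv\mathcal{U}$ at the necklace level induced pointwise by $F\dashv U$). By the uniqueness of right adjoints, the isomorphism of left adjoints established in the previous step transposes to the desired isomorphism of right adjoints. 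For the final displayed formula, I invoke Theorem \ref{theorem: nec functor fully faithful left-adjoint}: since $(-)^{nec}\colon \SSet \hookrightarrow \Cat_{\nec}$ is fully faithful with right adjoint $(-)^{temp}$, the unit is an isomorphism, so $\id_{\SSet} \simeq (-)^{temp}\circ (-)^{nec}$. Composing on the left with $\tilde{U}$ and then applying the second isomorphism yields
$$
\tilde{U} \simeq \tilde{U}\circ (-)^{temp}\circ (-)^{nec} \simeq (-)^{temp}\circ \mathcal{U}\circ (-)^{nec}.
$$

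The only nontrivial verification is the first step, and even there the main point is purely formal: one must check that the identifications provided by $F$ being strong monoidal cocontinuous are compatible with the functoriality in $\nec^{op}$ and with the composition of the enrichment. This compatibility is essentially the statement that post-composition with a strong monoidal cocontinuous functor preserves Day convolution, hence preserves enriched category structures built from it; I do not anticipate any real obstacle beyond careful bookkeeping with coproducts and tensor products.
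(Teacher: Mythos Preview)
Your proposal is correct and follows essentially the same approach as the paper: establish the isomorphism of left adjoints using that $F$ is strong monoidal and cocontinuous, deduce the isomorphism of right adjoints by uniqueness, and obtain the final formula from full faithfulness of $(-)^{nec}$. The paper's proof is terser (it simply asserts that the left-adjoint commutativity ``immediately follows'' from these properties of $F$), but your more explicit unwinding of the hom-objects is exactly the computation underlying that assertion.
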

\begin{proof}
It suffices to show the commutativity of the left-adjoints. But this immediately follows from the fact that $F: \Set\rightarrow \mathcal{V}$ is strong monoidal and preserves colimits. The final isomorphism $\tilde{U}\simeq (-)^{temp}\circ \mathcal{U}\circ (-)^{nec}$ follows from the fact that $(-)^{nec}$ is fully faithful.
\end{proof}

\begin{Lem}\label{lemma: temp. construction of constant necklace functor}
Let $\mathcal{C}$ be a finitely complete category. Let $f: A\rightarrow B$ be a morphism in $\mathcal{C}$ and $n\geq 2$. Then $A$ is the limit of the following diagram of solid arrows:
\[\begin{tikzcd}
	A & {\underset{\substack{k,l > 0\\ k + l = n}}{\prod}A} & {\underset{\substack{r,s,t > 0\\ r + s + t = n}}{\prod}A} \\
	B & {\underset{\substack{k,l > 0\\ k + l = n}}{\prod}B}
	\arrow["\alpha", shift left=1, from=1-2, to=1-3]
	\arrow["\beta"', shift right=1, from=1-2, to=1-3]
	\arrow["\Delta", dashed, from=1-1, to=1-2]
	\arrow["{\prod_{k,l}f}", from=1-2, to=2-2]
	\arrow["\Delta"', from=2-1, to=2-2]
	\arrow["f"', dashed, from=1-1, to=2-1]
\end{tikzcd}\]
where $\Delta$ is the diagonal morphism and $\alpha$ and $\beta$ are defined by
$$
\pi_{r,s,t}\alpha = \pi_{r+s,t}\quad \text{and}\quad \pi_{r,s,t}\beta = \pi_{r,s+t}
$$
for all $r,s,t > 0$ with $r + s + t = n$.
\end{Lem}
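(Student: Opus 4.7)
The plan is to verify the universal property of the limit directly. A cone from an object $X$ over the diagram of solid arrows consists of four morphisms $\xi_{1}: X \to \prod_{k,l} A$, $\xi_{2}: X \to \prod_{r,s,t} A$, $\eta_{0}: X \to B$, $\eta_{1}: X \to \prod_{k,l} B$ satisfying $\alpha\xi_{1} = \xi_{2} = \beta\xi_{1}$ and $(\prod_{k,l} f)\xi_{1} = \eta_{1} = \Delta\eta_{0}$. Since $\xi_{2}$ is determined by $\xi_{1}$ and $\eta_{1}$ is determined by either of $\xi_{1}, \eta_{0}$, the cone data reduces to a pair $(\xi_{1},\eta_{0})$ constrained by $\alpha\xi_{1}=\beta\xi_{1}$ and $(\prod f)\xi_{1} = \Delta\eta_{0}$. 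I want to show that such pairs are in natural bijection with morphisms $g: X \to A$, via $g\mapsto (\Delta g, fg)$.

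The first step is to analyze the equalizer of $\alpha$ and $\beta$. The defining formulas $\pi_{r,s,t}\alpha = \pi_{r+s,t}$ and $\pi_{r,s,t}\beta = \pi_{r,s+t}$ show that $\alpha\xi_{1}=\beta\xi_{1}$ is equivalent to the equality $\pi_{r+s,t}\xi_{1} = \pi_{r,s+t}\xi_{1}$ for every triple $r,s,t>0$ with $r+s+t=n$. For $n\geq 3$, given any two decompositions $(k,l)$ and $(k',l')$ of $n$ with, say, $k<k'$, taking $r=k$, $s=k'-k$, $t=l'$ produces a valid triple with $(r+s,t)=(k',l')$ and $(r,s+t)=(k,l)$; iterating, all projections $\pi_{k,l}\xi_{1}$ coincide, and so $\xi_{1}$ factors uniquely as $\Delta g$ for a unique $g: X \to A$. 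For the degenerate case $n=2$, the index set of triples is empty and $\prod_{k,l}A$ has only the single factor $(1,1)$, so $\prod_{k,l}A = A$ and the equalizer condition is vacuous; I set $g = \xi_{1}$ directly.

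The second step is to extract $fg = \eta_{0}$ from the remaining constraint. Projecting $(\prod f)\xi_{1} = \Delta\eta_{0}$ onto any component $(k,l)$ yields $f\pi_{k,l}\xi_{1} = \eta_{0}$; since $\pi_{k,l}\xi_{1} = g$, this is precisely $fg = \eta_{0}$. Conversely, given $g: X \to A$, the tuple $(\Delta g, \alpha\Delta g, fg, \Delta fg)$ is immediately a cone, and uniqueness is clear because $\Delta g = \xi_{1}$ forces $g = \pi_{k,l}\xi_{1}$ for any fixed $(k,l)$. Finite completeness of $\mathcal{C}$ ensures the limit exists, and the bijection $\mathcal{C}(X,\text{limit}) \simeq \mathcal{C}(X,A)$ is natural in $X$.

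The main obstacle, if any, is purely bookkeeping: the case $n=2$ behaves slightly differently from $n\geq 3$ because the triple product is empty and the equalizer condition carries no information, so both cases must be handled to cover the hypothesis $n\geq 2$. Everything else is a straightforward unpacking of projections and products.
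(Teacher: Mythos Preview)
Your proof is correct and is precisely the direct verification the paper gestures at with ``This is an easy verification.'' You have simply written out the details (including the mild case split $n=2$ versus $n\geq 3$) that the paper leaves implicit.
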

\begin{proof}
This is an easy verification.
\end{proof}

Let $\underline{(-)}: \mathcal{V}\rightarrow \mathcal{V}^{\nec^{op}}$ denote the diagonal functor associating to every object $V\in \mathcal{V}$ the constant functor on $V$. Then $\underline{(-)}$ is easily seen to be strong monoidal and thus it induces a functor
$$
\underline{(-)}: \mathcal{V}\Cat\rightarrow \mathcal{V}\Cat_{\nec}
$$

\begin{Prop}\label{proposition: nerve is temp functor of constant functor}
We have a natural isomorphism
$$
N_{\mathcal{V}}\simeq (-)^{temp}\circ \underline{(-)}
$$
\end{Prop}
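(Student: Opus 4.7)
The plan is to prove by induction on $n \geq 0$ that $\underline{\mathcal{C}}^{temp}_n \simeq \mathcal{C}^{\otimes n}$ in $\mathcal{V}\Quiv_{\Ob(\mathcal{C})}$, naturally in $\mathcal{C}$, in such a way that under this identification the structure maps of $\underline{\mathcal{C}}^{temp}$ coincide with those of $N_{\mathcal{V}}(\mathcal{C})$.

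First I would unpack $\underline{\mathcal{C}}$. Since $\underline{(-)}: \mathcal{V} \to \mathcal{V}^{\nec^{op}}$ is strong monoidal with respect to Day convolution, the necklace category $\underline{\mathcal{C}}$ has object set $\Ob(\mathcal{C})$, hom-functor $\underline{\mathcal{C}(a,b)}$ constant at $\mathcal{C}(a,b)$, composition induced by $m_{\mathcal{C}}$, and identities induced by $u_{\mathcal{C}}$. In particular, for every necklace $T$ and every necklace map $f: T' \to T$, the induced map $\underline{\mathcal{C}}_T(a,b) \to \underline{\mathcal{C}}_{T'}(a,b)$ is the identity on $\mathcal{C}(a,b)$, and the composition $\underline{\mathcal{C}}_T \otimes_S \underline{\mathcal{C}}_U \to \underline{\mathcal{C}}_{T \vee U}$ is $m_{\mathcal{C}}$.

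The base cases $n = 0,1$ are immediate: $\underline{\mathcal{C}}^{temp}_0 = I_{\Ob(\mathcal{C})} = \mathcal{C}^{\otimes 0}$ with $p_0 = u_{\mathcal{C}}$, and $\underline{\mathcal{C}}^{temp}_1 = \underline{\mathcal{C}}_{\{0<1\}} = \mathcal{C} = \mathcal{C}^{\otimes 1}$ with $p_1 = \id$. For $n \geq 2$, assume inductively that $\underline{\mathcal{C}}^{temp}_k \simeq \mathcal{C}^{\otimes k}$ for $k < n$, with $p_k$ identified with the $k$-fold multiplication $m^{(k)}_{\mathcal{C}}: \mathcal{C}^{\otimes k} \to \mathcal{C}$ and $\mu_{k',k''}$ identified with the associator. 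Under these identifications and the observation above, the limit diagram \eqref{diagram: temp. obj. associated to neck. obj. limit} defining $\underline{\mathcal{C}}^{temp}_n$ becomes (after collapsing $\underline{\mathcal{C}}(\nu_{k,l})$ to the identity and rewriting $\mathcal{C}^{\otimes k} \otimes_S \mathcal{C}^{\otimes l} \simeq \mathcal{C}^{\otimes n}$ via associators) exactly the diagram of Lemma \ref{lemma: temp. construction of constant necklace functor} with $A = \mathcal{C}^{\otimes n}$, $B = \mathcal{C}$, $f = m^{(n)}_{\mathcal{C}}$; the two maps $\alpha, \beta$ match up to a swap, which does not affect the limit. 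Hence $\underline{\mathcal{C}}^{temp}_n \simeq \mathcal{C}^{\otimes n}$ with $p_n$ identified with $m^{(n)}_{\mathcal{C}}$ and $\mu_{k,l}$ identified with the associator, completing the induction.

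It remains to identify the inner face and degeneracy maps. By the uniqueness clause at the end of Construction \ref{construction: temp. obj. associated to necklace cat.}, $\underline{\mathcal{C}}^{temp}(f)$ for $f: [m]\to [n]$ in $\fint$ is uniquely determined by the equations $\mu_{k,l}\underline{\mathcal{C}}^{temp}(f) = (\underline{\mathcal{C}}^{temp}(f_1) \otimes \underline{\mathcal{C}}^{temp}(f_2))\mu_{p,q}$ and $p_m \underline{\mathcal{C}}^{temp}(f) = \underline{\mathcal{C}}(f) p_n = p_n$. For the inner face $d_j = \id^{\otimes j-1} \otimes m_{\mathcal{C}} \otimes \id^{\otimes n-j-1}$ and degeneracy $s_i = \id^{\otimes i} \otimes u_{\mathcal{C}} \otimes \id^{\otimes n-i}$ of $N_{\mathcal{V}}(\mathcal{C})$, both equations hold by the associativity and unitality of $m_{\mathcal{C}}$, so these recover the structure maps of $\underline{\mathcal{C}}^{temp}$. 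Naturality of the resulting isomorphism $N_{\mathcal{V}}(\mathcal{C}) \simeq \underline{\mathcal{C}}^{temp}$ in $\mathcal{V}$-functors $\mathcal{C} \to \mathcal{D}$ follows from the same uniqueness and the fact that both sides are built functorially from $m_{\mathcal{C}}$ and $u_{\mathcal{C}}$.

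The main obstacle is the bookkeeping in the inductive step: one has to verify carefully that after applying the inductive identifications to diagram \eqref{diagram: temp. obj. associated to neck. obj. limit}, one recovers precisely the shape treated by Lemma \ref{lemma: temp. construction of constant necklace functor}. Once this is checked, everything else is formal.
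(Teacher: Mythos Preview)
Your proof is correct and follows essentially the same approach as the paper: both argue by induction on $n$, apply Lemma \ref{lemma: temp. construction of constant necklace functor} to the $n$-fold multiplication $m^{(n)}_{\mathcal{C}}: \mathcal{C}^{\otimes n}\to \mathcal{C}$ to identify the limit diagram \eqref{diagram: temp. obj. associated to neck. obj. limit} for $\underline{\mathcal{C}}$, and then read off the templicial structure and naturality. Your version simply spells out in more detail what the paper compresses into two sentences.
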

\begin{proof}
Let $\mathcal{C}$ be a small $\mathcal{V}$-enriched category and $n\geq 0$. Applying Lemma \ref{lemma: temp. construction of constant necklace functor} to the $n$-fold composition $m^{(n)}_{\mathcal{C}}: \mathcal{C}^{\otimes n}\rightarrow \mathcal{C}$ as a morphism in $\mathcal{V}\Quiv_{\Ob(\mathcal{C})}$, it follows from Construction \ref{construction: temp. obj. associated to necklace cat.} that $\underline{\mathcal{C}}^{temp}_{n}\simeq \mathcal{C}^{\otimes n}$ by induction on $n$. It quickly follows that this identification induces an isomorphism of templicial objects $\underline{\mathcal{C}}^{temp}\simeq N_{\mathcal{V}}(\mathcal{C})$, which is clearly natural in $\mathcal{C}$.
\end{proof}

\section{Enriching the homotopy coherent nerve}\label{section: Enriching the homotopy coherent nerve}

Let $\Cat_{\Delta}$ denote the category of small simplicial categories, that is categories enriched in the cartesian monoidal category of simplicial sets $(\SSet,\times,\Delta^{0})$. In this section we generalise the classical adjunction between the categorification functor $\mathfrak{C}: \SSet\rightarrow \Cat_{\Delta}$ and the homotopy coherent nerve $N^{hc}: \Cat_{\Delta}\rightarrow \SSet$ to the templicial level, yielding an adjunction $\mathfrak{C}_{\mathcal{V}}\dashv N^{hc}_{\mathcal{V}}$ which depends on $\mathcal{V}$ (Definition \ref{definition: temp. hc-nerve}). We first recall in \S\ref{subsection: The classical homotopy coherent nerve} the homotopy coherent nerve due to Cordier \cite{cordier1982sur}. It is most easily constructed as the formal right-adjoint to the categorification functor $\mathfrak{C}$, which has historically gone through several different equivalent descriptions. This goes back to Cordier and Porter \cite{cordier1986Vogt} and a different definition is given in \cite{lurie2009higher} which we outline below. Later, Dugger and Spivak \cite{dugger2011rigidification} gave a very elegant and simple description of $\mathfrak{C}$ by means of necklaces. We will closely follow their approach and adapt it to the templicial setting in \S\ref{subsection: Simplification of the categorification functor}.

\subsection{The classical homotopy coherent nerve}\label{subsection: The classical homotopy coherent nerve}

We recall Cordier's homotopy coherent nerve. Further, we give a new expression for its left-adjoint (Proposition \ref{proposition: categorification as weighted colimit}) which will allow us to generalise it more easily to the templicial setting in \S\ref{subsection: The templicial homotopy coherent nerve}.

\begin{Not}
Given a necklace $(T,p)$, consider the poset
$$
\mathcal{P}_{T} = \{U\subseteq [p]\mid T\subseteq U\}
$$
ordered by inclusion. Equivalently, it is the poset of inert necklace maps $U\hookrightarrow T$.

If $T = \{0 < p\}$ is a simplex, we also write $\mathcal{P}_{T} = \mathcal{P}_{p}$.
\end{Not}

\begin{Rem}\label{remark: poset of inert inclusions strong monoidal}
It is easy to see that the assignment $T\mapsto \mathcal{P}_{T}$ extends to a strong monoidal functor
$$
\mathcal{P}: \nec\rightarrow \Cat
$$
where for every necklace map $f: T\rightarrow U$, $\mathcal{P}(f)(V) = f(V)$ for all $V\in \mathcal{P}_{T}$. For necklaces $T$ and $U$, the monoidal structure is given by
$$
\mathcal{P}_{T}\times \mathcal{P}_{U}\rightarrow \mathcal{P}_{T\vee U}: (V,W)\mapsto (V\vee W)
$$
which is clearly an order isomorphism.
\end{Rem} 

In \cite[\S 1.1.5]{lurie2009higher}, a simplicial category $\mathfrak{C}[\Delta^{n}]$ is constructed as follows. Its objects are given by the set $[n]$ and for all $i,j\in [n]$, we have
$$
\mathfrak{C}[\Delta^{n}](i,j) =
\begin{cases}
N(\mathcal{P}_{j-i}) & \text{if }i\leq j\\
\emptyset & i > j
\end{cases}
$$
Note that $N(\mathcal{P}_{j-i})\simeq (\Delta^{1})^{\times j-i-1}$ if $i < j$ and $N(\mathcal{P}_{j-i})\simeq \Delta^{0}$ if $i = j$. Further, given $i\leq j\leq k$ in $[n]$, the composition
$$
m_{i,j,k}: \mathfrak{C}[\Delta^{n}](i,j)\times \mathfrak{C}[\Delta^{n}](j,k)\rightarrow \mathfrak{C}[\Delta^{n}](i,k)
$$
is given by applying $N$ to the order morphism
$$
\mathcal{P}_{j-i}\times \mathcal{P}_{k-j}\simeq \mathcal{P}_{\{0 < j-i < k-i\}}\hookrightarrow \mathcal{P}_{k-i}: (T,U)\mapsto T\vee U
$$
Finally, the identities are given by the unique vertex of $\mathfrak{C}[\Delta^{n}](i,i)\simeq \Delta^{0}$ for $i\in [n]$.

It is now easy to see that the above construction extends to a functor
$$
\mathfrak{C}[\Delta^{(-)}]: \simp\rightarrow \Cat_{\Delta}
$$
Then by left Kan extension along the Yoneda embedding $\simp\hookrightarrow \SSet$, the cosimplicial object $\mathfrak{C}[\Delta^{(-)}]$ induces an adjunction
\[\begin{tikzcd}
	\SSet & {\Cat_{\Delta}}
	\arrow[""{name=0, anchor=center, inner sep=0}, "{\mathfrak{C}}", shift left=2, from=1-1, to=1-2]
	\arrow[""{name=1, anchor=center, inner sep=0}, "{N^{hc}}", shift left=2, from=1-2, to=1-1]
	\arrow["\dashv"{anchor=center, rotate=-90}, draw=none, from=0, to=1]
\end{tikzcd}\]
For all small simplicial categories $\mathcal{C}$ and $n\geq 0$ we have that
$$
N^{hc}(\mathcal{C})_{n}\simeq \Cat_{\Delta}(\mathfrak{C}[\Delta^{n}],\mathcal{C})
$$

\begin{Ex}\label{example: homotopy coherent nerve in low dimensions}
Given a small simplicial category $\mathcal{C}$, let us describe its homotopy coherent nerve in low dimensions.
\begin{itemize}
\item The vertices of $N^{hc}(\mathcal{C})$ are given by the set of objects $\Ob(\mathcal{C})$.
\item The edges of $N^{hc}(\mathcal{C})$ are given by the morphisms of $\mathcal{C}$ (that is vertices $f\in \mathcal{C}_{0}(A,B)$ for some $A,B\in \Ob(\mathcal{C})$).
\item A $2$-simplex of $N^{hc}(\mathcal{C})$ is given by a (not necessarily commutative) diagram of morphisms in $\mathcal{C}$:
\[\begin{tikzcd}
	& C \\
	A && B
	\arrow["f", from=2-1, to=1-2]
	\arrow["g", from=1-2, to=2-3]
	\arrow[""{name=0, anchor=center, inner sep=0}, "h"', from=2-1, to=2-3]
	\arrow["\sigma"', shorten <=3pt, shorten >=3pt, Rightarrow, from=0, to=1-2]
\end{tikzcd}\]
along with an edge $\sigma$ of $\mathcal{C}(A,B)$ from $h$ to the composition $g\circ f$.
\end{itemize}
\end{Ex}

\begin{Prop}[\cite{dugger2011rigidification}, Proposition 3.7]\label{proposition: categorification of a necklace}
There is an isomorphism of simplicial sets
$$
\mathfrak{C}[T](0,p)\simeq N(\mathcal{P}_{T})
$$
that is natural in all necklaces $(T,p)\in \nec$.
\end{Prop}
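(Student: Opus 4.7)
The plan is to induct on the number of beads of $T$, exploiting that $\mathfrak{C}$ preserves pushouts together with the monoidal decomposition of $\mathcal{P}$ from Remark \ref{remark: poset of inert inclusions strong monoidal}. The base case is a single bead $T = \Delta^n$, for which the identity $\mathfrak{C}[\Delta^n](0,n) = N(\mathcal{P}_n) = N(\mathcal{P}_T)$ is built into the definition of $\mathfrak{C}[\Delta^n]$ (and for $T = \Delta^0$ both sides reduce to $\Delta^0$).

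For the inductive step, write $T = T_1 \vee T_2$ with $T_i$ a necklace on $[p_i]$ and $p = p_1 + p_2$. As a simplicial set, $T$ is the pushout $T_1 \sqcup_{\Delta^0} T_2$ identifying the last vertex of $T_1$ with the first vertex of $T_2$. Since $\mathfrak{C}$ is a left adjoint (constructed by left Kan extension of $\mathfrak{C}[\Delta^{(-)}]$ along the Yoneda embedding), $\mathfrak{C}[T]$ is the corresponding pushout in $\Cat_{\Delta}$. The decisive structural observation is that each $\mathfrak{C}[T_i]$ is directed, i.e.\ $\mathfrak{C}[T_i](j,i) = \emptyset$ for $j > i$; combined with the fact that the glued object $\mathfrak{C}[\Delta^0] = \ast$ has only an identity morphism, this forces every morphism from $0$ to $p$ in the pushout to factor uniquely through the glued vertex $p_1$. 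I therefore expect to obtain a natural identification
\[
\mathfrak{C}[T](0,p) \;\simeq\; \mathfrak{C}[T_1](0,p_1) \,\times\, \mathfrak{C}[T_2](0,p_2)
\]
in $\SSet$. By the induction hypothesis this equals $N(\mathcal{P}_{T_1}) \times N(\mathcal{P}_{T_2}) \simeq N(\mathcal{P}_{T_1} \times \mathcal{P}_{T_2})$, since the nerve $N: \Cat \to \SSet$ preserves products. The strong monoidal functor $\mathcal{P}: \nec \to \Cat$ of Remark \ref{remark: poset of inert inclusions strong monoidal} then supplies an order isomorphism $\mathcal{P}_{T_1} \times \mathcal{P}_{T_2} \simeq \mathcal{P}_{T_1 \vee T_2} = \mathcal{P}_T$, closing the induction.

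Naturality in $T$ will be verified on the generators $\delta_j$ and $\nu_{k,l}$ from Notation \ref{notation: generating necklace maps}, or equivalently on active and inert maps via the factorisation system on $\nec$. On inert maps, both sides decompose compatibly along the wedge, so naturality reduces to the projection onto the appropriate wedge factor via the monoidality of $\mathcal{P}$ and of the pushout description above. On active maps between single beads, the naturality is part of the definition of the cosimplicial object $\mathfrak{C}[\Delta^{(-)}]$; for general active maps, it follows by induction after writing the map as a sum $f = f_1 + \ldots + f_k$ in $\fint$.

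The main obstacle I anticipate is the rigorous justification of the product description of the hom-space in the pushout $\mathfrak{C}[T_1] \sqcup_{\mathfrak{C}[\Delta^0]} \mathfrak{C}[T_2]$: colimits in $\Cat_{\Delta}$ are delicate in general, being described via free zig-zag constructions modulo relations. Here, however, the directedness of each $\mathfrak{C}[T_i]$ and the fact that we glue along a single object with trivial hom-space mean that no backtracking zig-zags occur, so the generic free-category description collapses to the clean product above. This is precisely the same mechanism that underlies the original definition of $\mathfrak{C}[\Delta^n]$ in terms of ordinal sums of copies of $\Delta^1$.
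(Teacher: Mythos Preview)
The paper does not supply its own proof of this proposition; it is quoted as Proposition~3.7 of \cite{dugger2011rigidification} and used as a black box. Your sketch is essentially the argument given in that reference: induct on the number of beads, use that $\mathfrak{C}$ is a left adjoint to compute $\mathfrak{C}[T_1 \vee T_2]$ as a pushout in $\Cat_{\Delta}$, and invoke directedness of the $\mathfrak{C}[T_i]$ together with the triviality of the glued object to get the product formula on hom-spaces. The monoidality of $\mathcal{P}_{(-)}$ (Remark~\ref{remark: poset of inert inclusions strong monoidal}) then finishes the identification, and naturality along active/inert maps is handled as you describe.

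The one place to tighten is exactly the step you flag: the computation of $\mathfrak{C}[T_1] \amalg_{\ast} \mathfrak{C}[T_2]$ in $\Cat_{\Delta}$. You should state explicitly that since $\mathfrak{C}[\Delta^0]$ has a single object with hom-space $\Delta^0$, the pushout is the simplicial category with object set $[p_1]\amalg_{\{p_1\}}[p_2]\simeq [p]$ whose hom-space from $i$ to $j$ is $\mathfrak{C}[T_1](i,j)$ if $i,j\leq p_1$, is $\mathfrak{C}[T_2](i-p_1,j-p_1)$ if $i,j\geq p_1$, is $\mathfrak{C}[T_1](i,p_1)\times \mathfrak{C}[T_2](0,j-p_1)$ if $i\leq p_1\leq j$, and is empty otherwise. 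This is easy to verify directly by exhibiting the universal property (any cocone factors uniquely because every morphism $i\to j$ with $i\leq p_1\leq j$ must pass through $p_1$), and it is the standard ``ordinal sum'' or ``collage'' description you allude to. With that made precise, your proof is complete.
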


\begin{Prop}[\cite{dugger2011rigidification}, Proposition 4.3]\label{proposition: classical categorification}
For every simplicial set $K$ with vertices $a$ and $b$, there is an isomorphism of simplicial sets
$$
\mathfrak{C}[K](a,b)\simeq \colim_{\substack{T\rightarrow K_{a,b}\text{ in }\SSet_{*,*}\\ (T,p)\in \nec}}\mathfrak{C}[T](0,p) 
$$
\end{Prop}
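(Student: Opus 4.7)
The plan is to produce a candidate simplicial category $\mathfrak{C}'[K]$ whose $(a,b)$-hom-object is the right-hand side, together with a natural comparison map $\eta_{K} \colon \mathfrak{C}'[K] \to \mathfrak{C}[K]$, and then show that $\eta_{K}$ is an isomorphism. Define $\mathfrak{C}'[K]$ to have object set $K_{0}$ and hom-objects
\[
\mathfrak{C}'[K](a,b) \;=\; \colim_{T \to K_{a,b}} \mathfrak{C}[T](0,p).
\]
Composition $\mathfrak{C}'[K](a,b) \times \mathfrak{C}'[K](b,c) \to \mathfrak{C}'[K](a,c)$ is induced pairwise: a pair of necklace maps $T \to K_{a,b}$ and $U \to K_{b,c}$ wedges to $T \vee U \to K_{a,c}$, and via Proposition \ref{proposition: categorification of a necklace} combined with the strong monoidal structure of $\mathcal{P}$ (Remark \ref{remark: poset of inert inclusions strong monoidal}) we obtain
\[
\mathfrak{C}[T](0,p) \times \mathfrak{C}[U](0,q) \;\cong\; N(\mathcal{P}_{T} \times \mathcal{P}_{U}) \;\to\; N(\mathcal{P}_{T \vee U}) \;\cong\; \mathfrak{C}[T \vee U](0,p+q),
\]
which passes through the colimit. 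Identities come from the unit necklace $\Delta^{0} \to K_{a,a}$, and associativity and unitality follow from the monoidal coherence of $(\nec,\vee,\Delta^{0})$ together with the strong monoidality of $\mathcal{P}$. Applying $\mathfrak{C}$ to each necklace map $T \to K$ yields a simplicial functor $\mathfrak{C}[T] \to \mathfrak{C}[K]$, whose hom-component at $(0,p) \mapsto (a,b)$ assembles through the colimit into the natural comparison $\eta_{K} \colon \mathfrak{C}'[K] \to \mathfrak{C}[K]$, identity on objects.

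To check $\eta_{K}$ is an isomorphism, first test representables. For $K = \Delta^{n}$ with vertices $i \le j$, any monotone endpoint-preserving map $[p] \to [n]$ sending $0$ to $i$ and $p$ to $j$ has image contained in $\{i,\ldots,j\}$, so every necklace map $(T,p) \to (\Delta^{n})_{i,j}$ factors uniquely through the canonical inclusion $\Delta^{j-i} \hookrightarrow \Delta^{n}$. Hence the identity on $\Delta^{j-i}$ is terminal in the indexing comma category, and the colimit reduces to $\mathfrak{C}[\Delta^{j-i}](0,j-i) \cong N(\mathcal{P}_{j-i}) \cong \mathfrak{C}[\Delta^{n}](i,j)$, with $\eta_{\Delta^{n}}$ the identification under this terminality. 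To extend to arbitrary $K$, both $\mathfrak{C}$ and $\mathfrak{C}'$ need to be seen to preserve the colimit presentation $K = \colim_{\Delta^{n} \to K}\Delta^{n}$: $\mathfrak{C}$ because it is a left adjoint, and $\mathfrak{C}'$ because the colimit formula for its hom-objects together with the compactness of necklaces lets one show that $\mathfrak{C}'$ sends pushouts along $\partial\Delta^{n}\hookrightarrow \Delta^{n}$ to pushouts in $\Cat_{\Delta}$ (on objects trivially, on hom-objects by splitting each necklace map into a concatenation of necklace maps according to where its joints meet the attaching boundary). Since both functors agree on generating simplices and respect the colimit presentation of $K$, the natural transformation $\eta_{K}$ is an isomorphism for every $K$.

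The main obstacle is establishing the cocontinuity of $\mathfrak{C}'$: colimits in $\Cat_{\Delta}$ do not compute hom-wise, so one has to carefully check that the colimit formula for hom-objects really is compatible with pushouts and filtered colimits in $\SSet$. A cleaner alternative bypasses this via adjoints: set $N'(\mathcal{D})_{n} := \Cat_{\Delta}(\mathfrak{C}'[\Delta^{n}],\mathcal{D})$ and verify directly, from the universal property of the defining colimit, that $\mathfrak{C}' \dashv N'$. Since the simplex case already supplies $\mathfrak{C}'[\Delta^{n}] \cong \mathfrak{C}[\Delta^{n}]$, this gives $N' \cong N^{hc}$ and hence $\mathfrak{C}' \cong \mathfrak{C}$ by uniqueness of left adjoints, yielding the desired isomorphism on the $(a,b)$-hom-object.
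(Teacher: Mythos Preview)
The paper does not supply its own proof of this statement; it is cited without argument from Dugger--Spivak, so there is nothing in the paper to compare your attempt against directly.

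Evaluated on its own terms, your outline is sound in shape but leaves the essential step incomplete. You correctly identify the obstacle: colimits in $\Cat_{\Delta}$ are not computed hom-wise, so knowing $\mathfrak{C}'[\Delta^{n}]\cong\mathfrak{C}[\Delta^{n}]$ does not by itself give $\mathfrak{C}'[K]\cong\mathfrak{C}[K]$. Your first remedy --- arguing that $\mathfrak{C}'$ preserves cell-attachment pushouts by ``splitting each necklace map according to where its joints meet the attaching boundary'' --- is a one-line gesture at what is in fact the whole content of the proposition. Your alternative via adjoints does not sidestep this as stated: defining $N'(\mathcal{D})_{n}=\Cat_{\Delta}(\mathfrak{C}'[\Delta^{n}],\mathcal{D})$ makes $N'$ right adjoint to the left Kan extension of $[n]\mapsto\mathfrak{C}'[\Delta^{n}]$ along Yoneda, and concluding $\mathfrak{C}'\dashv N'$ from that still requires $\mathfrak{C}'[K]\cong\colim_{\Delta^{n}\to K}\mathfrak{C}'[\Delta^{n}]$ in $\Cat_{\Delta}$, which is precisely the cocontinuity you wanted to avoid. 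What would actually close the gap is a direct verification of the bijection $\Cat_{\Delta}(\mathfrak{C}'[K],\mathcal{D})\cong\SSet(K,N^{hc}\mathcal{D})$: unpack a simplicial functor out of $\mathfrak{C}'[K]$ as an object map together with, for each necklace $T\to K_{a,b}$, a map $\mathfrak{C}[T](0,p)\to\mathcal{D}(fa,fb)$ compatible with wedging, and match this against a compatible family of simplicial functors $\mathfrak{C}[\Delta^{n}]\to\mathcal{D}$ indexed by the simplices of $K$. Your phrase ``from the universal property of the defining colimit'' may be pointing at exactly this, but the bookkeeping --- checking that compatibility with composition on the $\mathfrak{C}'$ side matches naturality over the simplex category of $K$ on the other --- is where the real work lies, and it is not carried out here. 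As written, the proposal is a correct plan rather than a proof.
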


Recall by Proposition \ref{proposition: free-forget adjunction} that we may view a simplicial set $K$ as a templicial set and thus for any $a,b\in K_{0}$, we can apply Construction \ref{construction: necklace cat. associated to temp. obj.} to obtain a functor $K_{\bullet}(a,b): \nec^{op}\rightarrow \Set: T\mapsto K_{T}(a,b)$. It follows from Yoneda's lemma that we have a canonical bijection, natural in $T\in \nec$:
$$
\SSet_{*,*}(T,K_{a,b})\simeq K_{T}(a,b)
$$
With this in mind, we introduce another description of $\mathfrak{C}$ by means of a weighted colimit. For background on weighted colimits, we refer to the literature (see \cite[Definition 7.4.1]{riehl2014categorical} for example).

\begin{Prop}\label{proposition: categorification as weighted colimit}
For any simplicial set $K$ with vertices $a$ and $b$, $\mathfrak{C}[K](a,b)$ is isomorphic to the weighted colimit in $\SSet$:
$$
{\colim}^{K_{\bullet}(a,b)}N\mathcal{P}_{(-)}
$$
of $N\mathcal{P}_{(-)}: \nec\rightarrow \SSet$ with weight $K_{\bullet}(a,b): \nec^{op}\rightarrow \Set$.
\end{Prop}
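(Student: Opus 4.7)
The plan is to unwind the weighted colimit as a colimit over the category of elements of the weight, and then match the resulting formula with the description of $\mathfrak{C}[K](a,b)$ coming from Propositions \ref{proposition: categorification of a necklace} and \ref{proposition: classical categorification}. Since the weight $K_{\bullet}(a,b)$ takes values in $\Set$, the standard coend formula for a weighted colimit gives
$$
{\colim}^{K_{\bullet}(a,b)} N\mathcal{P}_{(-)} \;\cong\; \int^{T\in \nec} K_{T}(a,b)\cdot N\mathcal{P}_{T},
$$
where $X\cdot L$ denotes the copower $\coprod_{X}L$ in $\SSet$. By the general fact that a $\Set$-weighted colimit is the ordinary colimit over the category of elements of the weight, this coend is isomorphic to the colimit of $N\mathcal{P}_{(-)}$ over the Grothendieck construction $\int K_{\bullet}(a,b)$.

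Next, I would identify the category of elements of $K_{\bullet}(a,b)$ with the comma category $(\nec\downarrow K_{a,b})$ of necklaces over $K_{a,b}$ in $\SSet_{*,*}$. For this, I use the natural bijection
$$
K_{T}(a,b)\;\simeq\;\SSet_{*,*}(T,K_{a,b})
$$
recalled just before the proposition (an instance of Yoneda after viewing $K$ as a templicial set via $\tilde{F}$ and applying Construction \ref{construction: necklace cat. associated to temp. obj.}). Naturality in $T$ of this bijection upgrades it to an isomorphism of categories between $\int K_{\bullet}(a,b)$ and $(\nec\downarrow K_{a,b})$, under which the diagram $N\mathcal{P}_{(-)}$ pulled back corresponds precisely to the diagram $(T\to K_{a,b})\mapsto N\mathcal{P}_{T}$ appearing in Proposition \ref{proposition: classical categorification}.

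Combining these identifications yields
$$
{\colim}^{K_{\bullet}(a,b)} N\mathcal{P}_{(-)} \;\cong\; \colim_{\substack{T\to K_{a,b}\text{ in }\SSet_{*,*}\\ (T,p)\in \nec}} N\mathcal{P}_{T}.
$$
Finally, Proposition \ref{proposition: categorification of a necklace} replaces $N\mathcal{P}_{T}$ by $\mathfrak{C}[T](0,p)$, and Proposition \ref{proposition: classical categorification} identifies the resulting colimit with $\mathfrak{C}[K](a,b)$, proving the claim. The only nontrivial point to verify carefully is the naturality (in both $T$ and $K_{a,b}$) of the bijection $K_{T}(a,b)\simeq \SSet_{*,*}(T,K_{a,b})$, but this is essentially bookkeeping: an element of $K_{T}(a,b)$ records a choice of simplex of $K$ for each bead of $T$, joined at the vertices dictated by the joints of $T$, which is exactly the data of a bipointed map $T\to K_{a,b}$, and the compatibility under necklace morphisms follows from the functoriality of $X^{nec}_{\bullet}$ established in Construction \ref{construction: necklace cat. associated to temp. obj.}.
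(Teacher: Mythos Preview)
Your proof is correct and follows essentially the same approach as the paper. The paper writes out the coequaliser presentation of the colimit from Propositions \ref{proposition: categorification of a necklace} and \ref{proposition: classical categorification} and then reindexes it via the bijection $\SSet_{*,*}(T,K_{a,b})\simeq K_{T}(a,b)$ to obtain the standard coend formula for the weighted colimit; you do the same thing in reverse, passing through the category of elements instead of the explicit coequaliser, which is an equivalent packaging of the same identification.
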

\begin{proof}
From Propositions \ref{proposition: categorification of a necklace} and \ref{proposition: classical categorification}, it is clear that $\mathfrak{C}[K](a,b)$ is given by the following coequaliser in $\SSet$:
\[\begin{tikzcd}
	{\underset{\substack{T\rightarrow U\rightarrow K_{a,b}\\ T,U\in \nec}}{\coprod}N(\mathcal{P}_{T})} & {\underset{\substack{T\rightarrow K_{a,b}\\ T\in \nec}}{\coprod}N(\mathcal{P}_{T})} & {\mathfrak{C}[K](a,b)}
	\arrow["\alpha", shift left=1, from=1-1, to=1-2]
	\arrow["\beta"', shift right=1, from=1-1, to=1-2]
	\arrow[two heads, from=1-2, to=1-3]
\end{tikzcd}\]
where $\alpha$ and $\beta$ are given by respectively projecting onto $T\rightarrow K_{a,b}$ and applying $N\mathcal{P}_{(-)}$ to $T\rightarrow U$ for any $T\rightarrow U\rightarrow K_{a,b}$ in $\SSet_{*,*}$.

Since morphisms $T\rightarrow K_{a,b}$ in $\SSet_{*,*}$ with $T$ a necklace correspond to elements of the set $K_{T}(a,b)$, we obtain a coequaliser diagram
\[\begin{tikzcd}
	{\underset{\substack{T\rightarrow U\\ \text{in }\nec}}{\coprod}K_{U}(a,b)\times N(\mathcal{P}_{T})} & {\underset{T\in \nec}{\coprod}K_{T}(a,b)\times N(\mathcal{P}_{T})} & {\mathfrak{C}[K](a,b)}
	\arrow["\alpha", shift left=1, from=1-1, to=1-2]
	\arrow["\beta"', shift right=1, from=1-1, to=1-2]
	\arrow[two heads, from=1-2, to=1-3]
\end{tikzcd}\]
where $\alpha$ and $\beta$ are given by respectively applying $K_{\bullet}(a,b)$ and $N\mathcal{P}_{(-)}$ to $T\rightarrow U$ in $\nec$. But this coequaliser is precisely the weighted colimit described in the statement.
\end{proof}

\subsection{The templicial homotopy coherent nerve}\label{subsection: The templicial homotopy coherent nerve}

Consider the category $S\mathcal{V}$ of simplicial objects in $\mathcal{V}$, with the pointwise symmetric monoidal structure induced by that of $\mathcal{V}$. Note that its monoidal unit is the simplicial object $F(\Delta^{0}) = \underline{I}$ (the constant functor on $I$). Further, $S\mathcal{V}$ is canonically enriched and tensored over $\mathcal{V}$. We denote the enrichment over $\mathcal{V}$ by $[-,-]$. For every $V\in \mathcal{V}$ and $A\in S\mathcal{V}$, the tensoring $V\cdot A$ is given by the monoidal product $\underline{V}\otimes A$ where $\underline{V}$ denotes the constant functor on $V$.

We denote the category of small $S\mathcal{V}$-enriched categories by $\mathcal{V}\Cat_{\Delta}$. Note that the adjunction $F: \Set\leftrightarrows \mathcal{V}: U$ induces an adjunction $F: \SSet\leftrightarrows S\mathcal{V}: U$ by post-composition for which $F$ is still strong monoidal. Hence, we have an induced adjunction between simplicial categories and $S\mathcal{V}$-categories which we denote by
\[\begin{tikzcd}
	{\Cat_{\Delta}} & {\mathcal{V}\Cat_{\Delta}}
	\arrow[""{name=0, anchor=center, inner sep=0}, "{\mathcal{F}}", shift left=2, from=1-1, to=1-2]
	\arrow[""{name=1, anchor=center, inner sep=0}, "{\mathcal{U}}", shift left=2, from=1-2, to=1-1]
	\arrow["\dashv"{anchor=center, rotate=-90}, draw=none, from=0, to=1]
\end{tikzcd}\]

From Proposition \ref{proposition: categorification as weighted colimit} it is easy to see that the adjunction $\mathfrak{C}\dashv N^{hc}$ actually factors through the category $\Cat_{\nec}$ of Definition \ref{definition: necklace cats.}. Moreover, it suggests that we can define the templicial categorification functor by means of a similar weighted colimit.

\begin{Con}\label{construction: ajunction between simplicial and necklace objects}
We construct an adjunction
\[\begin{tikzcd}
	{\mathcal{V}^{\nec^{op}}} & {S\mathcal{V}}
	\arrow[""{name=0, anchor=center, inner sep=0}, "{\mathfrak{s}}", shift left=2, from=1-1, to=1-2]
	\arrow[""{name=1, anchor=center, inner sep=0}, "{\mathfrak{n}}", shift left=2, from=1-2, to=1-1]
	\arrow["\dashv"{anchor=center, rotate=-90}, draw=none, from=0, to=1]
\end{tikzcd}\]
as follows. Given a functor $X: \nec^{op}\rightarrow \mathcal{V}$, consider the weighted colimit in $S\mathcal{V}$:
$$
\mathfrak{s}(X) = {\colim}^{X}FN\mathcal{P}_{(-)}
$$
of the composite $\nec\xrightarrow{\mathcal{P}_{(-)}} \Cat\xrightarrow{N} \SSet\xrightarrow{F} S\mathcal{V}$ with weight $X$. Explicitly, $\mathfrak{s}(X)$ may be realised as the following coequaliser in $S\mathcal{V}$:
\begin{equation}\label{diagram: simplification coequalizer}
\begin{tikzcd}
	{\underset{\substack{f: T\rightarrow U\\ \text{in }\nec}}{\coprod}X_{U}\otimes FN(\mathcal{P}_{T})} & {\underset{T\in\nec}{\coprod}X_{T}\otimes FN(\mathcal{P}_{T})} & {\mathfrak{s}(X)}
	\arrow[two heads, from=1-2, to=1-3]
	\arrow["\beta"', shift right=1, from=1-1, to=1-2]
	\arrow["\alpha", shift left=1, from=1-1, to=1-2]
\end{tikzcd}
\end{equation}
where $\alpha$ and $\beta$ are given by respectively applying $X$ and $FN\mathcal{P}_{(-)}$ to a necklace morphism $f: T\rightarrow U$.

As a weighted colimit, $\mathfrak{s}(X)$ fits into a canonical bijection of sets
$$
S\mathcal{V}(\mathfrak{s}(X),Y)\simeq \mathcal{V}^{\nec^{op}}(X, [FN\mathcal{P}_{(-)},Y])
$$
which is natural in $Y\in S\mathcal{V}$. Hence, the assignment $X\mapsto \mathfrak{s}(X)$ extends to a functor $\mathfrak{s}: \mathcal{V}^{\nec^{op}}\rightarrow S\mathcal{V}$ which is left-adjoint to the functor
$$
\mathfrak{n}: S\mathcal{V}\rightarrow \mathcal{V}^{\nec^{op}}: Y\mapsto [FN\mathcal{P}_{(-)},Y]
$$
\end{Con}

\begin{Prop}\label{proposition: necklace-simplex adjunction is monoidal}
The functor $\mathfrak{s}: \mathcal{V}^{\nec^{op}}\rightarrow S\mathcal{V}$ of Construction \ref{construction: ajunction between simplicial and necklace objects} is strong monoidal.
\end{Prop}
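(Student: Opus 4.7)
The strategy is to recognise $\mathfrak{s}$ as the essentially unique colimit-preserving extension of the strong monoidal functor $FN\mathcal{P}_{(-)}\colon \nec \to S\mathcal{V}$ along the (free-enriched) Yoneda embedding $\nec \hookrightarrow \mathcal{V}^{\nec^{op}}$, and then to invoke the standard principle that, for Day convolution, such an extension is automatically strong monoidal.

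First I would rewrite the coequaliser \eqref{diagram: simplification coequalizer} as the coend
$$\mathfrak{s}(X) \;\cong\; \int^{T \in \nec} X(T) \otimes FN\mathcal{P}_T,$$
which is valid because $S\mathcal{V}$ is tensored over $\mathcal{V}$ and the parallel pair indexed by morphisms of $\nec$ is precisely the standard presentation of this coend. Next I would observe that $FN\mathcal{P}_{(-)}\colon \nec \to S\mathcal{V}$ is strong monoidal, being the composite of three strong monoidal functors: $\mathcal{P}$ is strong monoidal by Remark \ref{remark: poset of inert inclusions strong monoidal}; the nerve $N\colon \Cat \to \SSet$ is strong monoidal since it preserves finite products and sends the terminal category to $\Delta^0$; and $F\colon \SSet \to S\mathcal{V}$ is strong monoidal by the paper's conventions. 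In particular, $FN\mathcal{P}_{U \vee V} \cong FN\mathcal{P}_U \otimes FN\mathcal{P}_V$ naturally in $U,V \in \nec$, and $FN\mathcal{P}_{\{0\}} \cong F(\Delta^0) = \underline{I}$.

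Unit preservation then follows from the co-Yoneda lemma: since $\{0\}$ is terminal in $\nec$, the monoidal unit of Day convolution $F(\nec(-,\{0\}))$ agrees with $\underline{I}$, and
$$\mathfrak{s}(\underline{I}) \;=\; \int^T F(\nec(T,\{0\})) \otimes FN\mathcal{P}_T \;\cong\; FN\mathcal{P}_{\{0\}} \;\cong\; \underline{I}.$$
For the tensor product, I would expand Day convolution as the coend $(X \otimes_{Day} Y)(T) \cong \int^{U,V} \nec(U\vee V, T) \cdot (X(U) \otimes Y(V))$, substitute into the coend presentation of $\mathfrak{s}(X \otimes_{Day} Y)$, interchange coends by Fubini, collapse the inner coend via co-Yoneda to $FN\mathcal{P}_{U \vee V}$, split this as $FN\mathcal{P}_U \otimes FN\mathcal{P}_V$ using strong monoidality, and finally separate the remaining double coend into a product of coends. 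This last separation uses that the pointwise tensor on $S\mathcal{V}$ commutes with colimits in each variable (since $\mathcal{V}$ is symmetric monoidal closed), and produces the desired comparison isomorphism $\mathfrak{s}(X \otimes_{Day} Y) \cong \mathfrak{s}(X) \otimes \mathfrak{s}(Y)$.

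The main obstacle is verifying coherence — that the constructed comparison isomorphisms satisfy the associativity and unit pentagons for a strong monoidal functor. This does not involve any genuinely new computation: it reduces to the coherence of the monoidal structures on $\mathcal{P}$, $N$ and $F$ (hence on the composite $FN\mathcal{P}_{(-)}$), combined with the universal properties of coends and Day convolution, as established generally in \cite{day1970closed}. The rest of the argument is a routine manipulation of coends and naturality squares.
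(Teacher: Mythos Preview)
Your proposal is correct and takes essentially the same approach as the paper. The paper carries out the same chain of isomorphisms using weighted-colimit notation rather than coend notation, invoking the strong monoidality of $\mathcal{P}_{(-)}$, $N$ and $F$ together with the left-Kan-extension description of Day convolution and the fact that $\otimes$ commutes with colimits in each variable; your extra remarks on coherence are a welcome addition that the paper leaves implicit.
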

\begin{proof}
For functors $X,Y: \nec^{op}\rightarrow \mathcal{V}$, we have
\begin{align*}
\mathfrak{s}(X\otimes_{Day} Y) &= {\colim_{T\in \nec}}^{(X\otimes_{Day} Y)(T)}FN\mathcal{P}_{T}\simeq {\colim_{U,V\in \nec}}^{X(U)\otimes Y(V)}FN\mathcal{P}_{U\vee V}\\
&\simeq {\colim_{U\in \nec}}^{X(U)}FN\mathcal{P}_{U}\otimes {\colim_{V\in \nec}}^{Y(V)}FN\mathcal{P_{V}} = \mathfrak{s}(X)\otimes \mathfrak{s}(Y)
\end{align*}
where we have used the presentation of $X\otimes_{Day} Y$ as a left Kan extension and the strong monoidality of $F$, $N$ and $\mathcal{P}_{(-)}$ (see Remark \ref{remark: poset of inert inclusions strong monoidal}). Further, since $\underline{I} = F(\nec(-,\{0\}))$,
$$
\mathfrak{s}(\underline{I}) = {\colim_{T\in \nec}}^{\underline{I}}FN\mathcal{P}_{T}\simeq FN\mathcal{P}_{\{0\}}\simeq F(\Delta^{0})
$$
\end{proof}

\begin{Def}\label{definition: temp. hc-nerve}
By virtue of Proposition \ref{proposition: necklace-simplex adjunction is monoidal}, the adjunction $\mathfrak{s}\dashv \mathfrak{n}$ between $\mathcal{V}^{\nec^{op}}$ and $S\mathcal{V}$ induces an adjunction
\[\begin{tikzcd}
	{\mathcal{V}\Cat_{\nec}} & {\mathcal{V}\Cat_{\Delta}}
	\arrow[""{name=0, anchor=center, inner sep=0}, "{\mathfrak{s}}", shift left=2, from=1-1, to=1-2]
	\arrow[""{name=1, anchor=center, inner sep=0}, "{\mathfrak{n}}", shift left=2, from=1-2, to=1-1]
	\arrow["\dashv"{anchor=center, rotate=-90}, draw=none, from=0, to=1]
\end{tikzcd}\]
We call the composite
$$
\mathfrak{C}_{\mathcal{V}}: \ts\mathcal{V}\xrightarrow{(-)^{nec}} {\mathcal{V}\Cat_{\nec}}\xrightarrow{\mathfrak{s}} {\mathcal{V}\Cat_{\Delta}}
$$
the \emph{categorification functor}. It is left-adjoint to the composite
$$
N^{hc}_{\mathcal{V}}: \mathcal{V}\Cat_{\Delta}\xrightarrow{\mathfrak{n}} \mathcal{V}\Cat_{\nec}\xrightarrow{(-)^{temp}} \ts\mathcal{V}
$$
which we call the \emph{templicial homotopy coherent nerve}.
\end{Def}

\begin{Rem}\label{remark: categorification and homotopy coherent nerve for sets}
Suppose $\mathcal{V} = \Set$. Then the adjunction $\mathfrak{C}_{\mathcal{V}}\dashv N^{hc}_{\mathcal{V}}$ reduces to the classical adjunction $\mathfrak{C}\dashv N^{hc}$. Indeed, it suffices to note that $\mathfrak{C}_{\mathcal{V}}$ reduces to $\mathfrak{C}$, which follows from Proposition \ref{proposition: categorification as weighted colimit} and Construction \ref{construction: ajunction between simplicial and necklace objects}.
\end{Rem}

\begin{Ex}
Let $\mathcal{C}$ be a small $S\mathcal{V}$-category. We describe the templicial object $N^{hc}_{\mathcal{V}}(\mathcal{C})$ in low dimensions using Construction \ref{construction: temp. obj. associated to necklace cat.}. Note the analogy with Example \ref{example: homotopy coherent nerve in low dimensions}.
\begin{itemize}
\item The vertex set of $N^{hc}_{\mathcal{V}}(\mathcal{C})$ is simply $\Ob(\mathcal{C})$.
\item Further for any $A,B\in \Ob(\mathcal{C})$, it follows from $N(\mathcal{P}_{\{0 < 1\}})\simeq \Delta^{0}$ that
$$
N^{hc}_{\mathcal{V}}(\mathcal{C})_{1}(A,B) = \mathfrak{n}(\mathcal{C})_{\{0 < 1\}}(A,B) = [FN(\mathcal{P}_{\{0 < 1\}}),\mathcal{C}(A,B)]\simeq \mathcal{C}_{0}(A,B)
$$
\item In dimension $2$, it follows from $N(\mathcal{P}_{\{0 < 2\}})\simeq \Delta^{1}$ and $N(\mathcal{P}_{\{0 < 1 < 2\}})\simeq \Delta^{0}$ that
\begin{align*}
\mathfrak{n}(\mathcal{C})_{\{0 < 2\}}(A,B) &= [FN(\mathcal{P}_{\{0 < 2\}}),\mathcal{C}(A,B)]\simeq \mathcal{C}_{1}(A,B)\\
\mathfrak{n}(\mathcal{C})_{\{0 < 1 < 2\}}(A,B) &= [FN(\mathcal{P}_{\{0 < 1 < 2\}}), \mathcal{C}(A,B)]\simeq \mathcal{C}_{0}(A,B)
\end{align*}
The morphism $\mathfrak{n}(\mathcal{C})_{\{0 < 2\}}(A,B)\rightarrow \mathfrak{n}(\mathcal{C})_{\{0 < 1 < 2\}}(A,B)$ is induced by the inert necklace map $\nu_{1,1}: \{0 < 1 < 2\}\hookrightarrow \{0 < 2\}$ and thus corresponds to the face map\\ $d_{0}: \mathcal{C}_{1}(A,B)\rightarrow \mathcal{C}_{0}(A,B)$. It follows from \eqref{diagram: temp. obj. associated to neck. obj. limit} that we have a pullback diagram:
\[\begin{tikzcd}
	{N^{hc}_{\mathcal{V}}(\mathcal{C})_{2}(A,B)} & {\underset{C\in \Ob(\mathcal{C})}{\coprod}\mathcal{C}_{0}(A,C)\otimes \mathcal{C}_{0}(C,B)} \\
	{\mathcal{C}_{1}(A,B)} & {\mathcal{C}_{0}(A,B)}
	\arrow[from=1-1, to=2-1]
	\arrow[from=1-1, to=1-2]
	\arrow["{m_{0,0}}", from=1-2, to=2-2]
	\arrow["{d_{0}}"', from=2-1, to=2-2]
\end{tikzcd}\]
In particular, we see that the underlying set of the object $N^{hc}_{\mathcal{V}}(\mathcal{C})_{2}(A,B)$ consists of pairs $(\sigma,\alpha)$ with $\alpha\in U(\coprod_{C\in \Ob(\mathcal{C})}\mathcal{C}_{0}(A,C)\otimes \mathcal{C}_{0}(C,B))$ and $\sigma\in U(\mathcal{C}_{1}(A,B))$ an edge from $h = d_{1}(\sigma)$ to $m(\alpha)$.
\end{itemize}
\end{Ex}

\begin{Prop}\label{proposition: free and categorification commute}
There are canonical natural isomorphisms
$$
\mathfrak{C}_{\mathcal{V}}\circ \tilde{F}\simeq \mathcal{F}\circ \mathfrak{C}\quad \text{and}\quad \tilde{U}\circ N^{hc}_{\mathcal{V}}\simeq N^{hc}\circ \mathcal{U}
$$
\end{Prop}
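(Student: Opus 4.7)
The plan is to prove the left-adjoint isomorphism $\mathfrak{C}_{\mathcal{V}}\circ \tilde{F}\simeq \mathcal{F}\circ \mathfrak{C}$ first, and then deduce the right-adjoint isomorphism $\tilde{U}\circ N^{hc}_{\mathcal{V}}\simeq N^{hc}\circ \mathcal{U}$ by uniqueness of adjoints. Since $\tilde{F}\dashv \tilde{U}$, $\mathcal{F}\dashv \mathcal{U}$, $\mathfrak{C}_{\mathcal{V}}\dashv N^{hc}_{\mathcal{V}}$ and $\mathfrak{C}\dashv N^{hc}$, the composites $\mathfrak{C}_{\mathcal{V}}\circ \tilde{F}$ and $\mathcal{F}\circ \mathfrak{C}$ are left adjoints to $\tilde{U}\circ N^{hc}_{\mathcal{V}}$ and $N^{hc}\circ \mathcal{U}$ respectively, so any natural isomorphism between the left adjoints transposes to a natural isomorphism between the right adjoints.

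For the left-adjoint statement, I decompose $\mathfrak{C}_{\mathcal{V}} = \mathfrak{s}\circ (-)^{nec}$ as in Definition \ref{definition: temp. hc-nerve}. Using Remark \ref{remark: categorification and homotopy coherent nerve for sets}, $\mathfrak{C}$ itself factors as $\mathfrak{s}_{\Set}\circ (-)^{nec}$, where $\mathfrak{s}_{\Set}: \Set^{\nec^{op}}\to \SSet$ denotes the $\Set$-valued instance of Construction \ref{construction: ajunction between simplicial and necklace objects}. By Proposition \ref{proposition: free and nec functors commute} we already know that $(-)^{nec}\circ \tilde{F}\simeq \mathcal{F}\circ (-)^{nec}$, so it remains to establish a monoidal natural isomorphism $\mathfrak{s}\circ F\simeq F\circ \mathfrak{s}_{\Set}$ between the base functors $\Set^{\nec^{op}}\to S\mathcal{V}$, which then lifts to the induced functors $\Cat_{\nec}\to \mathcal{V}\Cat_{\Delta}$.

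The verification of $\mathfrak{s}\circ F\simeq F\circ \mathfrak{s}_{\Set}$ is the technical heart of the argument, and it falls out directly from the explicit coequaliser presentation \eqref{diagram: simplification coequalizer}. For $Y: \nec^{op}\to \Set$, both $\mathfrak{s}(FY)$ and $F\mathfrak{s}_{\Set}(Y)$ are computed as weighted colimits of $FN\mathcal{P}_{(-)}$; since $F: \Set\to \mathcal{V}$ is strong monoidal and cocontinuous, it commutes with the tensor products $FY_T\otimes FN(\mathcal{P}_T)$, with the coproducts indexed by $\nec$ and by necklace maps $T\to U$, and with the resulting coequaliser. The same strong monoidality implies (as in the proof of Proposition \ref{proposition: necklace-simplex adjunction is monoidal}) that this isomorphism is itself strong monoidal, which is what one needs to promote it from base functors to functors on enriched categories, giving $\mathfrak{s}\circ \mathcal{F}\simeq \mathcal{F}\circ \mathfrak{s}_{\Set}$.

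Stringing these isomorphisms together produces
$$\mathfrak{C}_{\mathcal{V}}\circ \tilde{F}=\mathfrak{s}\circ (-)^{nec}\circ \tilde{F}\simeq \mathfrak{s}\circ \mathcal{F}\circ (-)^{nec}\simeq \mathcal{F}\circ \mathfrak{s}_{\Set}\circ (-)^{nec}=\mathcal{F}\circ \mathfrak{C},$$
and the right-adjoint statement follows as indicated. I do not foresee any conceptual obstacle; the only real work is the bookkeeping needed to ensure that the isomorphism at the base level respects the monoidal structure sufficiently well to induce an isomorphism between the enriched-category constructions, and this is a formal consequence of $F$ being strong monoidal and cocontinuous together with the universal property defining $\mathfrak{s}$.
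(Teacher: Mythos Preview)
Your proposal is correct and follows essentially the same route as the paper's own proof: reduce to the left-adjoint statement by uniqueness of adjoints, factor $\mathfrak{C}_{\mathcal{V}}=\mathfrak{s}\circ(-)^{nec}$, invoke Proposition~\ref{proposition: free and nec functors commute} for the $(-)^{nec}$ part, and use that $F$ is strong monoidal and cocontinuous to commute $F$ past $\mathfrak{s}$ at the base level before lifting to enriched categories. The paper phrases the key step via the weighted-colimit description rather than the coequaliser~\eqref{diagram: simplification coequalizer}, but this is the same computation; your remark that one needs the base isomorphism to be monoidal in order to pass to $\mathcal{F}$ is a point the paper leaves implicit.
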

\begin{proof}
As $\mathfrak{C}_{\mathcal{V}}\dashv N^{hc}_{\mathcal{V}}$, $\mathfrak{C}\dashv N^{hc}$, $\tilde{F}\dashv \tilde{U}$ and $\mathcal{F}\dashv \mathcal{U}$, it suffices to only show the first natural isomorphism. Since $F: \SSet\rightarrow S\mathcal{V}$ preserves colimits and is strong monoidal, it is clear that
$$
{\colim_{T\in \nec}}^{FX_{T}}FN\mathcal{P}_{T}\simeq F\left({\colim_{T\in \nec}}^{X_{T}}N\mathcal{P}_{T}\right)
$$
for any functor $X: \nec^{op}\rightarrow \Set$. It follows that we have a natural isomorphism $F\circ \mathfrak{s}\simeq \mathfrak{s}\circ F$ of functors $\Set^{\nec^{op}}\rightarrow S\mathcal{V}$, and thus also $\mathcal{F}\circ \mathfrak{s}\simeq \mathfrak{s}\circ \mathcal{F}$ of functors $\Cat_{\nec}\rightarrow \mathcal{V}\Cat_{\Delta}$. Thus by Proposition \ref{proposition: free and nec functors commute} we have $\mathcal{F}\circ \mathfrak{C}\simeq \mathfrak{C}_{\mathcal{V}}\circ \tilde{F}$ as well.
\end{proof}

\subsection{Simplification of the categorification functor}\label{subsection: Simplification of the categorification functor}

Following \cite[\S 4]{dugger2011rigidification}, we give a simplified description of the categorification functor $\mathfrak{C}_{\mathcal{V}}: \ts\mathcal{V}\rightarrow \mathcal{V}\Cat_{\Delta}$ of Definition \ref{definition: temp. hc-nerve}. Let us first recall their approach.

Let $(T,p)$ be a necklace and $n\geq 0$. A \emph{flag of length $n$} on $T$ is defined as an $n$-simplex of the nerve $N(\mathcal{P}_{T})$. Explicitly, a flag of length $n$ on $T$ is a sequence of inclusions
$$
\vec{T} = (T_{0}\subseteq ...\subseteq T_{n})
$$
such that $T\subseteq T_{0}$ and $T_{n}\subseteq [p]$. We call a flag $\vec{T}$ on a necklace $T$ \emph{flanked} if $T = T_{0}$ and $T_{n} = [p]$.

Given a simplicial set $K$ with vertices $a$ and $b$, and $T = \Delta^{n_{1}}\vee ...\vee \Delta^{n_{k}}$ a necklace. A map $T\rightarrow K_{a,b}$ in $\SSet_{*,*}$ is \emph{totally non-degenerate} if for every $i\in \{1,...,k\}$, the composite map in $\SSet$
$$
\Delta^{n_{i}}\hookrightarrow T\rightarrow K_{a,b}
$$
represents a non-degenerate $n_{i}$-simplex of $K$.

As an immediate consequence of Proposition \ref{proposition: classical categorification}, an $n$-simplex of $\mathfrak{C}[K](a,b)$ consists of an equivalence class
\begin{equation}\label{equation: n-simplex of categorification}
[T,T\rightarrow K_{a,b}, \vec{T}]
\end{equation}
of triples $(T,T\rightarrow K_{a,b}, \vec{T})$ where
\begin{itemize}
\item $T$ is a necklace,
\item $T\rightarrow K_{a,b}$ is a map in $\SSet_{*,*}$ (equivalently, an element of $K_{T}(a,b)$),
\item $\vec{T}$ is a flag of length $n$ on $T$.
\end{itemize}
The equivalence relation is generated by setting two triples $(T,T\rightarrow K_{a,b},\vec{T})$ and $(U,U\rightarrow K_{a,b},\vec{U})$ to be equivalent if there exists a map of necklaces $f: T\rightarrow U$ making the obvious diagram commute, such that $f(T_{i}) = U_{i}$ for all $0\leq i\leq n$.

Then one can make the following reductions:
\begin{enumerate}
\item In every equivalence class \eqref{equation: n-simplex of categorification} there exists a triple $(T,T\rightarrow K_{a,b},\vec{T})$ such that $\vec{T}$ is flanked. Moreover, two
such triples are equivalent if and only if they can
be connected by a zig-zag of morphisms of flagged necklaces in which every flag is flanked.
\item In every equivalence class \eqref{equation: n-simplex of categorification} there exists a \emph{unique} triple $(T,T\rightarrow K_{a,b},\vec{T})$ such that $\vec{T}$ is flanked and $T\rightarrow K_{a,b}$ is totally non-degenerate. In other words, there is a bijection
$$
\mathfrak{C}[K]_{n}(a,b)\simeq \coprod_{\substack{T\in \nec\\ \vec{T}\text{ flanked flag}\\ \text{ of length }n}}\!\!\!K^{nd}_{T}(a,b)
$$
where $K^{nd}_{T}(a,b)\subseteq K_{T}(a,b)$ is the subset of totally non-degenerate maps $T\rightarrow K_{a,b}$.
\end{enumerate}
The main ingredient in the first reduction is flankification. Given a necklace $T$ with a flag $\vec{T} = (T_{0}\subseteq ...\subseteq T_{n})$, there is a unique order isomorphism $T_{n}\simeq [k]$ where $k$ is the number of beads of $T_{n}$. For all $i\in [n]$, write $T'_{i}$ for the image of $T_{i}$ under this isomorphism so that $T'_{0}\subseteq ... \subseteq T'_{n} = [k]$. Further set $T' = T'_{0}$ so that the flag $\vec{T}'$ is flanked on $T'$. Then $(T',\vec{T}')$ is the \emph{flankification} of $(T,\vec{T})$.\\

We now proceed to adapting these steps to the templicial setting. Generalising the first of the above reductions to templicial objects is fairly straightforward. This is done in Proposition \ref{proposition: reduction to flanked flags}. For the second reduction we have to restrict to templicial objects that have non-degenerate simplices (Definition \ref{definition: latching obj. of a temp. obj.}). Our definition of $\mathfrak{C}_{\mathcal{V}}[X]$ has the advantage that there is no reference to $X$ in the indexing category of the colimit involved, which allows for more categorical and shorter proofs of the reduction steps.

\begin{Not}
Given an integer $n\geq 0$, let us denote
$$
\fnec[n]
$$
for the category of pairs $(T,\vec{T})$ where $T$ is a necklace and $\vec{T} = (T_{0},...,T_{n})$ is a flag of length $n$ on $T$. A morphism $(T,\vec{T})\rightarrow (U,\vec{U})$ in $\fnec[n]$ is a necklace map $f: T\rightarrow U$ such that $f(T_{i}) = U_{i}$ for all $i\in [n]$. Further, we let
$$
\ffnec[n]
$$
denote the full subcategory of $\fnec[n]$ spanned by flagged necklaces whose flags are flanked. Note that a morphism in $\ffnec[n]$ is necessarily active and surjective on vertices.
\end{Not}

\begin{Lem}\label{lemma: inclusion of ffnec in fnec}
Let $n\geq 0$. Flankification extends to a functor $\fnec[n]\hookrightarrow \ffnec[n]$ which is right-adjoint to the inclusion $\iota: \ffnec[n]\hookrightarrow \fnec[n]$.
\end{Lem}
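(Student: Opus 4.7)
The strategy is to define flankification on morphisms directly and exhibit the counit of the adjunction, after which universality reduces to a short factorisation check.

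\medskip

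To extend flankification to a functor, let $f : (T, \vec{T}) \to (U, \vec{U})$ be a morphism in $\fnec[n]$ with underlying map $f : [p] \to [q]$ in $\fint$. The flag condition $f(T_n) = U_n$ means that $f$ restricts to an order-preserving surjection $T_n \twoheadrightarrow U_n$. Conjugating by the unique order isomorphisms $\phi_T : T_n \xrightarrow{\sim} [k]$ and $\phi_U : U_n \xrightarrow{\sim} [l]$ yields a morphism $f' := \phi_U \circ f|_{T_n} \circ \phi_T^{-1} : [k] \to [l]$ in $\fint$, and setting $T'_i := \phi_T(T_i)$, $U'_i := \phi_U(U_i)$, one checks that $f'(T'_i) = U'_i$ and $U' = f'(T'_0) \subseteq f'(T')$, so $f'$ is a morphism in $\ffnec[n]$. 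Functoriality is automatic since $\phi_T$ and $\phi_U$ are canonical.

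\medskip

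Next I construct the counit. For $(U, \vec{U}) \in \fnec[n]$ with $U_n \subseteq [q]$, let $\epsilon_{(U, \vec{U})} : \iota(U', \vec{U}') \to (U, \vec{U})$ be the morphism whose underlying $\fint$-map is the composite $[l] \xrightarrow{\phi_U^{-1}} U_n \hookrightarrow [q]$. This sends $U'_i$ to $U_i$, verifying the flag condition, and its image on $U'_0$ is $U_0 \supseteq U$, so it is a valid necklace map. When $(U, \vec{U})$ is already flanked one has $U_n = [q]$, so $\phi_U$ is the identity and $\epsilon_{(U, \vec{U})}$ is the identity morphism; in particular flankification restricts to the identity on $\ffnec[n]$.

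\medskip

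Finally, I verify universality. Given flanked $(S, \vec{S})$ with $S_n = [p]$ and any morphism $f : \iota(S, \vec{S}) \to (U, \vec{U})$ in $\fnec[n]$, the identity $f(S_n) = U_n$ forces the image of $f$ to lie in $U_n$, so $f$ factors uniquely as $[p] \xrightarrow{\bar{f}} [l] \xrightarrow{\phi_U^{-1}} U_n \hookrightarrow [q]$ with $\bar{f} := \phi_U \circ f$. Applying $\phi_U$ to $f(S_i) = U_i$ shows $\bar{f}(S_i) = U'_i$, so $\bar{f} : (S, \vec{S}) \to (U', \vec{U}')$ lies in $\ffnec[n]$, and $\epsilon_{(U, \vec{U})} \circ \iota(\bar{f}) = f$ by construction; uniqueness of $\bar{f}$ follows since the second factor $[l] \hookrightarrow [q]$ is injective. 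No step presents a genuine obstacle: the entire argument is bookkeeping about order-preserving maps on subsets of $[p]$, and the only real subtlety is keeping track of endpoints so that all induced maps remain in $\fint$, which holds because $0, p \in T \subseteq T_n$ and similarly for $U$.
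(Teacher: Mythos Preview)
Your proof is correct and follows essentially the same approach as the paper: construct the counit $\epsilon$ as the inclusion $[l]\simeq U_n\hookrightarrow [q]$ and verify the universal property by factoring any map from a flanked object through it via the unique order isomorphism. The only difference is that you first extend flankification to morphisms explicitly, whereas the paper leaves this implicit (since functoriality follows automatically from the universal property of the counit); this makes your argument slightly longer but otherwise identical in content.
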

\begin{proof}
Denote $\gamma(T,\vec{T})$ for the flankification of a flagged necklace $(T,\vec{T})$. If $k$ is the number of beads of $T$, we obtain a morphism $\epsilon: \iota\gamma(T,\vec{T})\rightarrow (T,\vec{T})$ in $\fnec[n]$ with underlying morphism $[k]\simeq T_{n}\hookrightarrow [p]$ in $\fint$.
Given $(U,\vec{U})\in \ffnec[n]$ with $(U,q)$ a necklace, and a morphism $f: \iota(U,\vec{U})\rightarrow (T,\vec{T})$ in $\fnec[n]$, we have in particular that $T_{n} = f(U_{n}) = f([q])$. So the morphism $f: [q]\rightarrow [p]$ in $\fint$ factors uniquely as some $g: [q]\rightarrow [k]$ followed by $[k]\hookrightarrow [p]$. Moreover, $g$ defines a morphism $(U,\vec{U})\rightarrow \gamma(T,\vec{T})$ in $\ffnec[n]$ such that $\epsilon\circ \iota(g)$. This completes the proof.
\end{proof}

\begin{Prop}\label{proposition: reduction to flanked flags}
Let $(X,S)$ be a templicial object and $a,b\in S$. Then for every $n\geq 0$, we have a canonical isomorphism
$$
\mathfrak{C}_{\mathcal{V}}[X]_{n}(a,b)\simeq \colim_{(T,\vec{T})\in\ffnec[n]}X_{T}(a,b)
$$
\end{Prop}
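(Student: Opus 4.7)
The plan is to proceed in two main steps: first, unfold $\mathfrak{C}_{\mathcal{V}}[X]_{n}(a,b)$ into a colimit indexed by $\fnec[n]$; second, apply a cofinality argument via Lemma \ref{lemma: inclusion of ffnec in fnec} to restrict this colimit to $\ffnec[n]$.

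For the first step, I would start from Definition \ref{definition: temp. hc-nerve}: $\mathfrak{C}_{\mathcal{V}}[X] = \mathfrak{s}(X^{nec})$, so at the hom-level Construction \ref{construction: ajunction between simplicial and necklace objects} gives $\mathfrak{C}_{\mathcal{V}}[X](a,b) = \colim^{X_{\bullet}(a,b)}FN\mathcal{P}_{(-)}$, presented as the coequalizer \eqref{diagram: simplification coequalizer}. Evaluating at level $n$ and using that $(N\mathcal{P}_{T})_{n}$ is by definition the set of flags of length $n$ on $T$, together with the fact that $V \otimes F(S) \simeq \coprod_{s \in S} V$ for any $V \in \mathcal{V}$ and set $S$, the coequalizer rewrites as
\[
\mathfrak{C}_{\mathcal{V}}[X]_{n}(a,b) \;\simeq\; \colim_{(T,\vec{T}) \in \fnec[n]} X_{T}(a,b),
\]
where a morphism $f\colon (T,\vec{T}) \to (U,\vec{U})$ in $\fnec[n]$ acts via $X_{f}\colon X_{U}(a,b) \to X_{T}(a,b)$; equivalently, one takes the colimit over $\fnec[n]^{op}$ of the induced covariant functor.

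For the second step, Lemma \ref{lemma: inclusion of ffnec in fnec} provides a right adjoint $\gamma$ (flankification) to the inclusion $\iota\colon \ffnec[n] \hookrightarrow \fnec[n]$, with counit $\epsilon\colon \iota\gamma(T,\vec{T}) \to (T,\vec{T})$. The universal property of $\epsilon$ presents $(\gamma(T,\vec{T}), \epsilon)$ as a terminal object of the slice category $\iota \downarrow (T,\vec{T})$; dually, $(T,\vec{T}) \downarrow \iota^{op}$ has an initial object, so $\iota^{op}\colon \ffnec[n]^{op} \to \fnec[n]^{op}$ is final. Consequently, for any functor $D\colon \fnec[n]^{op} \to \mathcal{V}$ the canonical map
\[
\colim_{\ffnec[n]^{op}} D \circ \iota^{op} \;\xrightarrow{\sim}\; \colim_{\fnec[n]^{op}} D
\]
is an isomorphism. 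Applying this to the functor identified in Step 1 yields the required natural isomorphism.

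The substantive bookkeeping is concentrated in Step 1, where one must verify that the coequalizer relations---indexed by pairs consisting of a morphism $T \to U$ in $\nec$ together with a flag on $T$---correspond precisely to the morphisms of $\fnec[n]$, namely pairs of a necklace map and a compatible shift of flags. This identification is combinatorial but essentially routine, given that a necklace map $f\colon T \to U$ sends any flag $\vec{T}$ on $T$ to the image flag $(f(T_{0}) \subseteq \cdots \subseteq f(T_{n}))$ on $U$. Once this correspondence is established, Step 2 is a clean application of the standard cofinality principle, enabled entirely by the adjunction of Lemma \ref{lemma: inclusion of ffnec in fnec}.
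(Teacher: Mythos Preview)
Your proposal is correct and follows essentially the same route as the paper's own proof: first rewrite the level-$n$ coequalizer \eqref{diagram: simplification coequalizer} as a colimit over $\fnec[n]^{op}$, then invoke Lemma \ref{lemma: inclusion of ffnec in fnec} to conclude that $\iota^{op}$ is final (the paper phrases this as ``$\iota$ is a left adjoint, so $\iota^{op}$ is a right adjoint and hence final''), which restricts the colimit to $\ffnec[n]^{op}$. Your unpacking of finality via the terminal/initial object in the slice is just a slightly more explicit version of the same standard fact.
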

\begin{proof}
In view of \eqref{diagram: simplification coequalizer}, we have a coequaliser for every integer $n\geq 0$:
\[\begin{tikzcd}
	{\underset{\substack{f: T\rightarrow U\\ \vec{T}\text{ flag on }T\\ \text{of length }n}}{\coprod}X_{U}(a,b)} & {\underset{\substack{T\in \nec\\ \vec{T}\text{ flag on }T\\ \text{of length }n}}{\coprod}X_{T}(a,b)} & {\mathfrak{C}_{\mathcal{V}}[X]_{n}(a,b)}
	\arrow["\alpha", shift left=1, from=1-1, to=1-2]
	\arrow["\beta"', shift right=1, from=1-1, to=1-2]
	\arrow[two heads, from=1-2, to=1-3]
\end{tikzcd}\]
where $\alpha$ is given by $X(f)$ and $\beta$ is given by applying $f$ to $\vec{T}$, for a necklace morphism $f: T\rightarrow U$. We thus have a canonical isomorphism
$$
\mathfrak{C}_{\mathcal{V}}[X]_{n}(a,b)\simeq \colim_{(T,\vec{T})\in\fnec[n]}X_{T}(a,b)
$$
Now as the inclusion $\ffnec[n]\hookrightarrow \fnec[n]$ is a left adjoint by Lemma \ref{lemma: inclusion of ffnec in fnec}, the corresponding functor between opposite categories is a right adjoint and thus a final functor. Hence, the result follows.
\end{proof}

\begin{Rem}
The simplicial structure of $\mathfrak{C}_{\mathcal{V}}[X](a,b) = \colim^{X_{\bullet}(a,b)}N\mathcal{P}_{(-)}$ is given by that of $N\mathcal{P}_{T}$, i.e. by deleting and copying terms in a flag, but the simplicial structure on $\colim_{(T,\vec{T})\in\ffnec}X_{T}(a,b)$ is slightly more difficult. The degeneracy maps and inner face maps are still given by respectively copying and deleting terms in the flags. The outer face maps however are given by first deleting the term $T_{0}$ or $T_{n}$ from a flag $(T_{0},...,T_{n})$ and then applying the flankification functor.
\end{Rem}

Let $(X,S)$ be a templicial object with non-degenerate simplices and $T$ a necklace which we write as $\{0 = t_{0} < t_{1} < t_{2} < ... < t_{k} = p\}$. Then we denote
$$
X^{nd}_{T} = X^{nd}_{t_{1}}\otimes_{S} X^{nd}_{t_{2}-t_{1}}\otimes_{S} ... \otimes_{S} X^{nd}_{p-t_{k-1}}\in \mathcal{V}\Quiv_{S}
$$
where $X^{nd}_{n}$ denotes the quiver of non-degenerate simplices of Definition \ref{definition: latching obj. of a temp. obj.}.

\begin{Prop}\label{proposition: reduction to totally non-degenerate necklaces}
Let $(X,S)$ be a templicial object with non-degenerate simplices. For all $n\geq 0$ and $a,b\in S$, we have an isomorphism in $\mathcal{V}$:
$$
\mathfrak{C}_{\mathcal{V}}[X]_{n}(a,b)\simeq \coprod_{\substack{T\in \nec\\ \vec{T}\text{ flanked flag}\\ \text{ of length }n}}\!\!\!X^{nd}_{T}(a,b)
$$
\end{Prop}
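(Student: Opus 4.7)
The plan is to start from Proposition~\ref{proposition: reduction to flanked flags} and build mutually inverse maps between $\colim_{(T,\vec{T}) \in \ffnec[n]} X_T(a,b)$ and the claimed coproduct, using the Eilenberg--Zilber lemma bead-wise. For $T = \{0 = t_0 < t_1 < \cdots < t_k = p\}$, applying Lemma~\ref{lemma: temp. Eilenberg-Zilber} to each tensor factor of $X_T = X_{t_1} \otimes_S \cdots \otimes_S X_{p-t_{k-1}}$ and distributing coproducts past $\otimes_S$ yields a natural isomorphism
$$
X_T(a,b) \;\simeq\; \coprod_{f\colon T\twoheadrightarrow T'} X^{nd}_{T'}(a,b),
$$
indexed by ``bead-wise surjective'' maps, namely active necklace maps which are surjective on vertices (equivalently, tuples $(\sigma_i\colon [t_i-t_{i-1}] \twoheadrightarrow [m_i])_{i=1}^{k}$ in $\simp_{surj}$, with $0$-dimensional beads collapsed in $T'$). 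The summand at $f$ embeds into $X_T$ via $X^{nd}_{T'} \hookrightarrow X_{T'} \xrightarrow{X(f)} X_T$. Any morphism in $\ffnec[n]$ is such an $f$, since a flanked flag forces $T_0 = T$ and $T_n = [p]$; conversely, any bead-wise surjective $f\colon T \twoheadrightarrow T'$ pushes a flanked flag $\vec{T}$ forward to the flanked flag $f_*(\vec{T}) := (f(T_0),\ldots,f(T_n))$ on $T'$, since $f(T_0) = f(T) = T'$ and $f(T_n) = [q']$.

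Using this, define a map
$$
\Psi\colon \mathfrak{C}_{\mathcal{V}}[X]_n(a,b) \longrightarrow \coprod_{(T,\vec{T}) \in \ffnec[n]} X^{nd}_T(a,b)
$$
by sending the class of $\alpha \in X_T(a,b)$ at $(T,\vec{T})$ to the unique $\alpha' \in X^{nd}_{T'}(a,b)$ living at $(T', f_*(\vec{T}))$ such that $\alpha = X(f)(\alpha')$ in the EZ-decomposition. To verify $\Psi$ descends to the colimit, take a morphism $g\colon (T,\vec{T}) \to (U,\vec{U})$ in $\ffnec[n]$ and $\beta \in X_U(a,b)$ with decomposition $\beta = X(h)(\beta')$, $h\colon U \twoheadrightarrow U'$, $\beta' \in X^{nd}_{U'}$. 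Then $hg\colon T \to U'$ is again bead-wise surjective (the class is stable under composition), so by the uniqueness clause of Lemma~\ref{lemma: temp. Eilenberg-Zilber}, $X(g)(\beta) = X(hg)(\beta')$ is itself the EZ-decomposition of $X(g)(\beta)$ in $X_T$. Moreover $(hg)_*(\vec{T}) = h_*(g_*(\vec{T})) = h_*(\vec{U})$, so both $(T,\vec{T},X(g)(\beta))$ and $(U,\vec{U},\beta)$ are sent to the common element $(U', h_*(\vec{U}), \beta')$.

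For the inverse, post-composing the EZ-inclusion $X^{nd}_T \hookrightarrow X_T$ with the universal cocone defines $\Phi\colon \coprod_{(T,\vec{T})} X^{nd}_T(a,b) \to \mathfrak{C}_{\mathcal{V}}[X]_n(a,b)$. For $\Psi \circ \Phi = \mathrm{id}$: an element $\alpha' \in X^{nd}_T$ has trivial EZ-decomposition $\alpha' = X(\id_T)(\alpha')$ with $(\id_T)_*(\vec{T}) = \vec{T}$, so $\Psi$ returns $\alpha'$ at $(T,\vec{T})$. For $\Phi \circ \Psi = \mathrm{id}$: the morphism $f\colon (T,\vec{T}) \to (T', f_*(\vec{T}))$ in $\ffnec[n]$ witnesses the colimit identification $(T,\vec{T}, X(f)(\alpha')) \sim (T', f_*(\vec{T}), \alpha')$, so $\Phi$ recovers the original class.

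The main obstacle is the well-definedness of $\Psi$: one needs that post-composing with the necklace map underlying a morphism of $\ffnec[n]$ preserves the canonical EZ-decomposition. This hinges on closure of bead-wise surjective maps under composition, which is immediate from the ``active and vertex-surjective'' description, and on the uniqueness part of Lemma~\ref{lemma: temp. Eilenberg-Zilber} which forces the a priori factorisation $X(hg)(\beta')$ to coincide with the EZ canonical form; everything else is bookkeeping.
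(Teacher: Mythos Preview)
Your argument is correct and shares the paper's core strategy: reduce to the flanked-flag colimit (Proposition~\ref{proposition: reduction to flanked flags}), decompose each $X_T$ via bead-wise Eilenberg--Zilber, and exploit that the indexing set of the EZ decomposition (active, vertex-surjective necklace maps out of $T$) coincides with the set of morphisms out of $(T,\vec{T})$ in $\ffnec[n]$. Where you build explicit mutually inverse maps $\Phi,\Psi$ and verify the identifications by hand, the paper instead rewrites the iterated sum as
\[
\coprod_{(U,\vec{U})\in\ffnec[n]}\ \colim_{\substack{(T,\vec{T})\to (U,\vec{U})\\ \text{in }\ffnec[n]}} X^{nd}_U(a,b)
\]
and observes that each inner colimit is over the connected category $\bigl((\ffnec[n])_{/(U,\vec{U})}\bigr)^{op}$ with a constant diagram, hence collapses to $X^{nd}_U(a,b)$. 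The two routes encode the same identification; the paper's is terser, while yours makes the bijection explicit. One stylistic remark: your element-wise phrasing (``the class of $\alpha\in X_T(a,b)$'', ``take $\beta\in X_U(a,b)$'') is informal for a general cosmos $\mathcal{V}$; what you are really doing is defining and comparing morphisms in $\mathcal{V}$ summand-by-summand through the EZ isomorphism, and it would be cleaner to say so directly.
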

\begin{proof}
By Proposition \ref{proposition: reduction to flanked flags} and Lemma \ref{lemma: temp. Eilenberg-Zilber}, we have an isomorphism
$$
\mathfrak{C}_{\mathcal{V}}[X]_{n}(a,b)\simeq \colim_{(T,\vec{T})\in\ffnec[n]}\coprod_{\substack{f_{i}: [t_{i}-t_{i-1}]\twoheadrightarrow [n_{i}]\\ i\in \{1,...,k\}}}\left(X^{nd}_{n_{1}}\otimes_{S} ...\otimes_{S} X^{nd}_{n_{k}}\right)(a,b)
$$
where we've written $T = \{0 = t_{0} < t_{1} < ... < t_{k} = p\}$ for any $(T,\vec{T})\in \ffnec[n]$. Now let $f: (T,p)\rightarrow (U,q)$ be an active necklace map such that its underlying morphism $f: [p]\rightarrow [q]$ in $\fint$ is surjective. We can uniquely decompose $f = f_{1} + ... + f_{k}$ with $f_{i}: [t_{i}-t_{i-1}]\twoheadrightarrow [n_{i}]$ in $\simp_{surj}$ for all $i\in \{1,...,n\}$. Moreover, given a flag $\vec{T}$ of length $n$ on $T$, there is a unique flanked flag $\vec{U} = (U_{0},...,U_{n})$ on $U$ such that $f: T\rightarrow U$ lifts to a morphism $f: (T,\vec{T})\rightarrow (U,\vec{U})$ in $\ffnec[n]$ (simply set $U_{i} = f(T_{i})$). It follows that
\begin{align*}
\mathfrak{C}_{\mathcal{V}}[X]_{n}(a,b)&\simeq \colim_{(T,\vec{T})\in\ffnec[n]}\coprod_{\substack{(T,\vec{T})\rightarrow (U,\vec{U})\\ \text{ in }\ffnec[n]}}X^{nd}_{U}(a,b)\\
&\simeq \coprod_{(U,\vec{U})\in \ffnec[n]}\colim_{\substack{(T,\vec{T})\rightarrow (U,\vec{U})\\ \text{in }\ffnec[n]}}X^{nd}_{U}(a,b)\simeq \coprod_{(U,\vec{U})\in \ffnec[n]}X^{nd}_{U}(a,b)
\end{align*}
The last isomorphism is obtained by noting that the colimit on the left hand side is indexed over the category $\left((\ffnec[n])_{/(U,\vec{U})}\right)^{op}$, which is connected, and the functor involved is constant on $X^{nd}_{U}(a,b)$.
\end{proof}

\subsection{Comparison with the templicial nerve}\label{subsection: Comparison with the templicial nerve}

Analogous to the classical homotopy coherent nerve, we show that the templicial homotopy coherent nerve $N^{hc}_{\mathcal{V}}$ restricts to the templicial nerve $N_{\mathcal{V}}$ (see \S\ref{subsection: The templicial nerve}) when applied to ordinary $\mathcal{V}$-enriched categories.

Consider the $\mathcal{V}$-enriched left-adjoint $\pi_{0}: S\mathcal{V}\rightarrow \mathcal{V}$ to the functor $\underline{(-)}: \mathcal{V}\rightarrow S\mathcal{V}$ sending every object $V\in \mathcal{V}$ to the constant functor on $V$. Then for any $Y\in S\mathcal{V}$, we have a reflexive coequaliser:
\begin{equation}\label{diagram: connected components coequalizer}
\begin{tikzcd}
	{Y_{1}} & {Y_{0}} & {\pi_{0}(Y)}
	\arrow["{d_{0}}", shift left=2, from=1-1, to=1-2]
	\arrow["{d_{1}}"', shift right=2, from=1-1, to=1-2]
	\arrow["{s_{0}}"{description}, from=1-2, to=1-1]
	\arrow[two heads, from=1-2, to=1-3]
\end{tikzcd}
\end{equation}
For example if $\mathcal{V} = \Set$, then $\pi_{0}$ is the functor taking the set of connected components of a simplicial set.

As the monoidal product $\otimes$ of $\mathcal{V}$ preserves colimits in each variable, it follows that $\pi_{0}$ is strong monoidal and thus we have an induced adjunction
\[\begin{tikzcd}
	{\mathcal{V}\Cat_{\Delta}} & {\mathcal{V}\Cat}
	\arrow[""{name=0, anchor=center, inner sep=0}, "{\pi_{0}}", shift left=2, from=1-1, to=1-2]
	\arrow[""{name=1, anchor=center, inner sep=0}, "{\underline{(-)}}", shift left=2, from=1-2, to=1-1]
	\arrow["\dashv"{anchor=center, rotate=-90}, draw=none, from=0, to=1]
\end{tikzcd}\]

\begin{Prop}\label{proposition: homotopy coherent nerve of discrete simplicial cat.}
We have a natural isomorphism
$$
N^{hc}_{\mathcal{V}}\circ \underline{(-)}\simeq N_{\mathcal{V}}
$$
\end{Prop}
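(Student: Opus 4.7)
The plan is to reduce the statement to a simple computation for $\mathfrak{n}$ applied to constant simplicial $\mathcal{V}$-objects. By Proposition \ref{proposition: nerve is temp functor of constant functor}, $N_{\mathcal{V}}\simeq (-)^{temp}\circ \underline{(-)}$, where $\underline{(-)}\colon \mathcal{V}\Cat\to \mathcal{V}\Cat_{\nec}$ is induced by the diagonal $\mathcal{V}\to \mathcal{V}^{\nec^{op}}$, while by Definition \ref{definition: temp. hc-nerve},
$$
N^{hc}_{\mathcal{V}}\circ \underline{(-)} = (-)^{temp}\circ \mathfrak{n}\circ \underline{(-)},
$$
where the inner $\underline{(-)}$ is now $\mathcal{V}\Cat\to \mathcal{V}\Cat_{\Delta}$ induced by $\mathcal{V}\to S\mathcal{V}$. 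Hence it is enough to produce a natural, lax-monoidal isomorphism $\mathfrak{n}\circ \underline{(-)} \simeq \underline{(-)}$ between the evident functors $\mathcal{V}\to \mathcal{V}^{\nec^{op}}$; applying $(-)^{temp}$ then yields the desired isomorphism of templicial objects.

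The key pointwise computation proceeds as follows. For $V\in \mathcal{V}$ and a necklace $T$, Construction \ref{construction: ajunction between simplicial and necklace objects} gives $\mathfrak{n}(\underline{V})(T) = [FN\mathcal{P}_T, \underline{V}]$. Using that $\underline{(-)}\colon \mathcal{V}\to S\mathcal{V}$ is right-adjoint to $\pi_0$ (see \eqref{diagram: connected components coequalizer}) and the definition of the tensoring, for every $W\in \mathcal{V}$ one computes
$$
\mathcal{V}(W, [FN\mathcal{P}_T,\underline{V}]) \simeq S\mathcal{V}(\underline{W}\otimes FN\mathcal{P}_T, \underline{V}) \simeq \mathcal{V}(\pi_0(\underline{W}\otimes FN\mathcal{P}_T), V).
$$
Since $\pi_0$ is strong monoidal and commutes with $F$ (both are left adjoints preserving colimits), the argument simplifies to $W\otimes F(\pi_0 N\mathcal{P}_T)$. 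The poset $\mathcal{P}_T$ admits the minimum element $T$ itself, so $N\mathcal{P}_T$ is connected, $\pi_0(N\mathcal{P}_T)\simeq \{*\}$, and the expression collapses to $\mathcal{V}(W, V)$. The Yoneda lemma then provides a canonical isomorphism $[FN\mathcal{P}_T,\underline{V}]\simeq V$.

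Naturality in $V$ is clear from the construction. Naturality in $T$ is automatic: any necklace map $T\to U$ induces the unique map $\pi_0 N\mathcal{P}_T \to \pi_0 N\mathcal{P}_U$ between singletons, so the family of isomorphisms assembles into a natural isomorphism $\mathfrak{n}(\underline{V})\simeq \underline{V}$ in $\mathcal{V}^{\nec^{op}}$. For the monoidal compatibility, the lax monoidal structure on $\mathfrak{n}$ arises by doctrinal adjunction from the strong monoidality of $\mathfrak{s}$ (Proposition \ref{proposition: necklace-simplex adjunction is monoidal}); via the identifications $FN\mathcal{P}_T\otimes FN\mathcal{P}_U \simeq FN\mathcal{P}_{T\vee U}$ coming from Remark \ref{remark: poset of inert inclusions strong monoidal}, it corresponds under the above isomorphism to the strong monoidal structure on $\underline{(-)}\colon \mathcal{V}\to \mathcal{V}^{\nec^{op}}$. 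Promoting the result to enriched categories yields $\mathfrak{n}(\underline{\mathcal{C}})\simeq \underline{\mathcal{C}}$ in $\mathcal{V}\Cat_{\nec}$, and applying $(-)^{temp}$ completes the proof.

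The main point to verify carefully is the monoidal/compositional compatibility: although the pointwise isomorphism is forced by a Yoneda argument, one has to confirm that the two composition morphisms---on $\mathfrak{n}(\underline{\mathcal{C}})$ coming from the lax structure of $\mathfrak{n}$, and on the constant necklace category $\underline{\mathcal{C}}$---agree under the identification. This reduces to checking that both routes produce the same multiplication $\underline{V}\otimes_{Day}\underline{W} \to \underline{V\otimes W}$, which follows formally from the naturality established above.
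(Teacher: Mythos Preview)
Your proposal is correct and follows essentially the same approach as the paper: reduce via Proposition~\ref{proposition: nerve is temp functor of constant functor} to showing $\mathfrak{n}\circ\underline{(-)}\simeq\underline{(-)}$ as functors $\mathcal{V}\to\mathcal{V}^{\nec^{op}}$, then use the adjunction $\pi_0\dashv\underline{(-)}$ together with the connectedness of $N\mathcal{P}_T$ to identify $[FN\mathcal{P}_T,\underline{V}]\simeq V$. Your treatment is in fact slightly more careful than the paper's, which does the same pointwise computation but does not explicitly address the monoidal compatibility needed to promote the isomorphism to the level of enriched categories.
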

\begin{proof}
By Proposition \ref{proposition: nerve is temp functor of constant functor}, we have $N_{\mathcal{V}}\simeq (-)^{temp}\circ \underline{(-)}$ for $\underline{(-)}: \mathcal{V}\Cat\rightarrow \mathcal{V}\Cat_{\nec}$. Thus it suffices to show that we have an isomorphism $\mathfrak{n}\circ \underline{(-)}\simeq \underline{(-)}$ of functors $\mathcal{V}\rightarrow \mathcal{V}^{\nec^{op}}$. Take an object $A\in \mathcal{V}$ and denote $[-,-]$ for the internal hom of $\mathcal{V}$. Since the simplicial set $N(\mathcal{P}_{T})$ clearly only has one connected component, it follows from the fact that $F$ preserves colimits that
$$
[FN\mathcal{P}_{T},\underline{A}]\simeq [\pi_{0}FN\mathcal{P}_{T},A]\simeq [F(\pi_{0}N\mathcal{P}_{T}),A]\simeq [F(\{*\}),A]\simeq A
$$
for all necklaces $T$. It follows that $\mathfrak{n}(\underline{A})$ is isomorphic to the constant functor on $A$. Clearly, this isomorphism is natural in $A$ as desired.
\end{proof}

\begin{Def}\label{definition: homotopy cat. functor}
It immediately follows from Proposition \ref{proposition: homotopy coherent nerve of discrete simplicial cat.} that the templicial nerve $N_{\mathcal{V}}$ has a left-adjoint given by the composite
$$
h_{\mathcal{V}} = \pi_{0}\circ \mathfrak{C}_{\mathcal{V}}: \ts\mathcal{V}\rightarrow \mathcal{V}\Cat
$$
which we call the \emph{homotopy category functor}.
\end{Def}

\begin{Rem}\label{remark: temp. homotopy cat. generalizes classical homotopy cat.}
Note that $h_{\mathcal{V}}$ necessarily recovers the classical homotopy category functor $h: \SSet\rightarrow \Cat$ when $\mathcal{V} = \Set$ by Remark \ref{remark: temp. nerve generalizes classical nerve}.
\end{Rem}

\begin{Cor}
Let $(X,S)$ be a templicial object with $a,b\in S$. Then we have a reflexive coequaliser:
\[\begin{tikzcd}
	{\underset{\substack{T\in \nec\\ T\neq \{0\}}}{\coprod}X_{T}(a,b)} & {\underset{p > 0}{\coprod}X^{\otimes p}_{1}}(a,b) & {h_{\mathcal{V}}X(a,b)}
	\arrow["\beta"', shift right=1, from=1-1, to=1-2]
	\arrow["\alpha", shift left=1, from=1-1, to=1-2]
	\arrow[two heads, from=1-2, to=1-3]
\end{tikzcd}
\]
with $\alpha$ and $\beta$ induced by the unique active and inert necklace maps $([k],k)\rightarrow (T,p)$ and $([p],p)\rightarrow (T,p)$ of Remark \ref{remark: simplices and spines in cat. of necklaces} respectively, for any necklace $(T,p)$ with $k$ beads.
\end{Cor}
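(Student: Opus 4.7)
By Definition \ref{definition: homotopy cat. functor} we have $h_{\mathcal{V}}X(a,b)=\pi_{0}(\mathfrak{C}_{\mathcal{V}}[X](a,b))$. The plan is to first rewrite this as an unweighted colimit over $\nec^{op}$, then identify it with the stated coequaliser. For the first step, $\pi_{0}\colon S\mathcal{V}\to \mathcal{V}$ is a left adjoint by \eqref{diagram: connected components coequalizer}, so it preserves the weighted colimit description from Construction \ref{construction: ajunction between simplicial and necklace objects}. Since $\mathcal{P}_{T}$ has minimum element $T$, the nerve $N\mathcal{P}_{T}$ is connected and $\pi_{0}(FN\mathcal{P}_{T})\simeq I$. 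Applying $\pi_{0}$ to \eqref{diagram: simplification coequalizer} therefore collapses all weights to $I$ and yields $h_{\mathcal{V}}X(a,b)\simeq \colim_{T\in \nec^{op}} X_{T}(a,b)$.

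It remains to identify this colimit with the stated coequaliser $C$. The forward map, induced by the inclusion of spine summands $X_{1}^{\otimes p}=X_{[p]}(a,b)$ into the colimit, is surjective because every class is represented via the inert map $[p]\to T$ of Remark \ref{remark: simplices and spines in cat. of necklaces}, and the basic relation $\alpha(\xi)=\beta(\xi)$ holds in the colimit since $X(\alpha_{T})(\xi)$ and $X(\iota_{T})(\xi)$ both represent the class of $\xi\in X_{T}$. For the inverse, I would construct a cocone $\{\phi_{T}\colon X_{T}(a,b)\to C\}_{T}$ on $X_{\bullet}(a,b)$ by setting $\phi_{T}=(X_{[p]}\hookrightarrow C)\circ X(\iota_{T})$ for $T\neq \{0\}$, and $\phi_{\{0\}}=(X_{1}\hookrightarrow C)\circ s_{0}$ on the trivial necklace.

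The hard part will be verifying that $\{\phi_{T}\}$ is compatible with every morphism $f\colon T\to U$ in $\nec$. Using the unique active--inert factorisation (see the remark after Definition \ref{definition: active and inert necklace map}) it suffices to treat active and inert morphisms separately. The inert case is immediate from the equality $\iota\circ \iota_{U}=\iota_{V}$ of inert maps out of the common spine together with the contravariance of $X$. The active case $a\colon T\to V$ reduces, by applying the basic relation $\alpha\sim \beta$ at both $T$ (to $X(a)(\xi)$) and $V$ (to $\xi$), to a spine-to-spine identification for the induced surjection $g\colon [k_{T}]\twoheadrightarrow [k_{V}]$ in $\fint$ arising from the factorisation $a\circ \alpha_{T}=\alpha_{V}\circ g$. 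Such spine-to-spine identifications must in turn be derived from the basic relations, which I would achieve by a $\Delta^{2}$-bead trick: for a codegeneracy $s_{i}\colon [p+1]\twoheadrightarrow [p]$, take $T:=[p+1]\setminus \{i+1\}\subseteq [p+1]$, the necklace with a single $\Delta^{2}$-bead at position $i$. Then $s_{i}$ extends to an active necklace morphism $T\to [p]$ satisfying $s_{i}\circ \alpha_{T}=\id_{[p]}$ and $s_{i}\circ \iota_{T}=s_{i}$, so the basic relation at $T$ applied to $X(s_{i})(\eta)\in X_{T}$ produces exactly $\eta\sim X(s_{i})(\eta)$ in $C$. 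Since every $\fint$-surjection factors as a composition of codegeneracies, all spine-to-spine identifications follow.

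The edge cases are routine. The trivial necklace $T=\{0\}$ is absorbed via the unique active morphism $[1]\to [0]$ in $\nec$, whose image under $X$ is $s_{0}\colon X_{0}\to X_{1}$; this both justifies dropping the $p=0$ summand and guarantees compatibility of $\phi_{\{0\}}$ with all morphisms into $\{0\}$. Finally, the reflexivity of the resulting coequaliser is witnessed by the section $[p]\mapsto [p]$ (viewed as a spine necklace), which splits both $\alpha$ and $\beta$ since $\alpha_{[p]}=\iota_{[p]}=\id_{[p]}$.
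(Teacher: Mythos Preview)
Your argument is correct and takes a genuinely different route from the paper's. The paper first invokes Proposition~\ref{proposition: reduction to flanked flags} to express $\mathfrak{C}_{\mathcal{V}}[X]_{n}(a,b)$ as a colimit over $\ffnec[n]$, then applies the reflexive coequaliser \eqref{diagram: connected components coequalizer} for $\pi_{0}$ to obtain a coequaliser of \emph{colimits} (over $\nec_{-}$ and $\simp_{surj}$ respectively), and finally replaces each colimit by a coproduct. The key step for the right-hand replacement is the observation that any surjective spine-to-spine map $([p],p)\twoheadrightarrow([q],q)$ factors through an intermediate necklace $(T,p)$ with $q$ beads admitting a section from $([q],q)$; this is precisely the mechanism underlying your ``$\Delta^{2}$-bead trick'', stated once rather than bead-by-bead.

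By contrast, you bypass Proposition~\ref{proposition: reduction to flanked flags} entirely: applying $\pi_{0}$ directly to the coend \eqref{diagram: simplification coequalizer} and using $\pi_{0}(FN\mathcal{P}_{T})\simeq I$ collapses the weighted colimit to $\colim_{\nec^{op}}X_{T}(a,b)$, which you then identify with the stated coequaliser by building an explicit cocone and checking compatibility via the active--inert factorisation. Your approach is more elementary and self-contained (no flanked flags), at the cost of more hand verification; the paper's is shorter because it reuses machinery already in place. Both arguments ultimately hinge on the same necklace factorisation idea.

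One stylistic caution: your verification is phrased in terms of elements $\xi\in X_{T}$, $\eta\in X_{[p]}$, etc., which is not literally valid for general $\mathcal{V}$. However, each step translates into a commuting-diagram identity (e.g.\ your $\Delta^{2}$-bead trick becomes the equality $q_{[p]}=q_{[p+1]}\circ X(s_{i})$ of morphisms $X_{[p]}\to C$, obtained by precomposing the coequaliser relation at $T=[p+1]\setminus\{i+1\}$ with $X(s_{i}\colon T\to[p])$ and using $s_{i}\circ\alpha_{T}=\id$, $s_{i}\circ\iota_{T}=s_{i}$). Since all your manipulations are of this form, the argument is sound; it would be worth saying so explicitly.
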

\begin{proof}
It directly follows from Proposition \ref{proposition: reduction to flanked flags} and \eqref{diagram: connected components coequalizer} that for all $a,b\in S$, we have the following (reflexive) coequaliser:
\begin{equation}\label{diagram: homotopy cat. coequalizer 1}
\begin{tikzcd}
	{\underset{\substack{T\in \nec_{-}\\ T\neq \{0\}}}{\colim}X_{T}(a,b)} & {\underset{\substack{[p]\in \simp_{surj}\\ p > 0}}{\colim}X^{\otimes p}_{1}(a,b)} & {h_{\mathcal{V}}X(a,b)}
	\arrow["\beta'"', shift right=1, from=1-1, to=1-2]
	\arrow["\alpha'", shift left=1, from=1-1, to=1-2]
	\arrow[two heads, from=1-2, to=1-3]
\end{tikzcd}
\end{equation}
where $\nec_{-}$ denotes the subcategory of $\nec$ consisting of all active necklace maps that are surjective on vertices, and $\alpha'$ and $\beta'$ are defined similarly to $\alpha$ and $\beta$. Via the epimorphism $\coprod_{T}X_{T}(a,b)\twoheadrightarrow \colim_{T}X_{T}(a,b)$, we may replace the left hand colimit by $\coprod_{T}X_{T}(a,b)$.

To show that we may also replace the right hand colimit, observe that any surjective necklace map $f: ([p],p)\twoheadrightarrow ([q],q)$ with $q > 0$ can be factored as an inert map $([p],p)\rightarrow (T,p)$ followed by some $\sigma: (T,p)\rightarrow ([q],q)$ such that $T$ has $q$ beads and the unique active map $([q],q)\hookrightarrow (T,p)$ is a section of $\sigma$. Indeed, let $\sigma: [p]\twoheadrightarrow [q]$ be the underlying morphism of $f$ in $\fint$. Then $\sigma$ has a section $\delta$ in $\fint$. Now simply set $T = \delta([p])$.
\end{proof}

Recall the commutativity results for the templicial nerve (Proposition \ref{proposition: nerve commutativity results}). While it follows immediately from Proposition \ref{proposition: free and categorification commute} that $h_{\mathcal{V}}\circ \tilde{F}\simeq \mathcal{F}\circ h$, the homotopy category functors and forgetful functors do not commute in general, as the following example shows.

\begin{Ex}\label{example: homotopy and forgetful don't commute}
Let $\mathcal{V} = \Mod(k)$ be the category of $k$-modules over with $k$ an arbitrary unital commutative ring. Let us denote $h_{k} = h_{\Mod(k)}$. Consider the templicial $k$-module $X = \tilde{F}(\partial\Delta^{2})$. Then the hom-object $(h_{k}X)(0,2)\in \Mod(k)$ is isomorphic to
$$
F(h(\partial\Delta^{2})(0,2)) = F(\{
\begin{tikzpicture}[baseline=(0)]
\filldraw
(0,0) circle (2pt) node(0){}
(0.5,0.4) circle (2pt) node(1){}
(1,0) circle (2pt) node(2){};
\draw[->] (0) -- (1);
\draw[->] (1) -- (2);
\draw
(0) node[left]{0}
(1) node[above]{1}
(2) node[right]{2};
\end{tikzpicture}
,
\begin{tikzpicture}[baseline=(base)]
\draw
(0,0) node(base){};
\filldraw
(0,0.1) circle (2pt) node(0){}
(1,0.1) circle (2pt) node(2){};
\draw[->] (0) -- (2);
\draw
(0) node[left]{0}
(2) node[right]{2};
\end{tikzpicture}
\})\simeq k\oplus k
$$
On the other hand, note that each edge in $\tilde{U}(X)$ between two given vertices is uniquely determined by an element $a_{i}\in k$. So the set $h\tilde{U}(X)(0,2)$ consists of equivalence classes of sequences of edges $(a_{1},...,a_{n})$ from $0$ to $2$ in $\tilde{U}(X)$. One can check that
$$
h\tilde{U}\tilde{F}(\partial\Delta^{2})(0,2)\simeq U(k)\amalg_{U(0)} U(k)
$$
which identifies a sequence $(a_{1},...,a_{n})$ with its product $a_{n}\cdots a_{1}$ in $k$. The two terms $U(k)$ correspond to paths either passing through the vertex $1$ or not. Now the induced map $h\tilde{U}\tilde{F}(\partial\Delta^{2})(0,2)\rightarrow U((h_{k}X)(0,2))$ on hom-sets corresponds to the canonical map
$$
U(k)\amalg_{U(0)} U(k)\rightarrow U(k\oplus k)
$$
which is certainly not a bijection if $k$ is not the zero ring. Hence, the canonical functor
$$
h\tilde{U}(X)\rightarrow \mathcal{U}(h_{k}X)
$$
is not an equivalence of categories.
\end{Ex}

In the next section, we will restrict to a special class of templicial objects which we call quasi-categories in $\mathcal{V}$. It turns out that for a quasi-category $X$ in $\mathcal{V}$, the canonical functor $h\tilde{U}(X)\rightarrow \mathcal{U}(h_{\mathcal{V}}X)$ is always an isomorphism (under suitable hypotheses on $\mathcal{V}$), see  Corollary \ref{corollary: forget commute with homotopy for fibrant temp. obj.} below.

\section{Quasi-categories in a monoidal category}\label{section: Quasi-categories in a monoidal category}

Quasi-categories are models for $(\infty,1)$-categories first introduced by Joyal \cite{joyal2002quasi} as simplicial sets satisfying the weak Kan condition in the sense of Boardman and Vogt \cite{boardmanvogt}. That is, a simplicial set $X$ is a quasi-category if every simplicial map $\Lambda^{n}_{j}\rightarrow X$ from an inner horn can be extended to a map $\Delta^{n}\rightarrow X$ from the standard simplex. In \cite{joyal2004notes}, Joyal equips $\SSet$ with a model structure in which the fibrant objects are precisely the quasi-categories. In this section, we introduce the natural analogue of quasi-categories in the templicial context (Definition \ref{definition: necklace functor lifts inner horns}). However, in view of Example \ref{example: free non-cell-cofibrant temp. obj.}, we express the lifting condition in the category $\mathcal{V}^{\nec^{op}}$, rather than $\ts\mathcal{V}$. Nonetheless, we still recover classical quasi-categories when $\mathcal{V} = \Set$ (Proposition \ref{proposition: fibrant temp. sets are quasi-cats.}). We continue in \S\ref{subsection: Nerves and quasi-categories} by showing our main result that the templicial nerve produces quasi-categories in $\mathcal{V}$ from locally Kan $S\mathcal{V}$-categories (Corollary \ref{corollary: hc-nerve preserves fibrant objects}). In \S\ref{subsection: Simplification of the homotopy category} we discuss the homotopy category of a quasi-category in $\mathcal{V}$.

\subsection{Horn filling in necklaces}\label{subsection: Horn filling in necklaces}

For integers $0\leq j\leq n$, we denote by $\Lambda^{n}_{j}$ the $j$th horn of the $n$-simplex. That is, $\Lambda^{n}_{j}$ is the union of all the faces of $\Delta^{n}$, except the $j$th face. In order to define quasi-categories in $\mathcal{V}$, we wish to consider the usual horn lifting property in the category $\mathcal{V}^{\nec^{op}}$ via Construction \ref{construction: necklace cat. associated to temp. obj.}. In this case, it is convenient to express the horn as a union of necklaces, rather than faces.

\begin{Prop}\label{proposition: horns and boundaries as unions of faces}
For all integers $0 < j < n$,
$$
(\Lambda^{n}_{j})_{\bullet}(0,n) = \bigcup_{\substack{i=1\\ i\neq j}}^{n-1}\delta_{i}(\Delta^{n-1})_{\bullet}(0,n)\cup \bigcup_{k=1}^{n-1}(\Delta^{k}\vee \Delta^{n-k})_{\bullet}(0,n)
$$
as a subfunctor of $\Delta^{n}_{\bullet}(0,n)$ in $\Set^{\nec^{op}}$.
\end{Prop}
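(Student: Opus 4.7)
The plan is to verify the equality of subfunctors of $\Delta^n_{\bullet}(0,n)$ at each object of $\nec^{op}$. Fix a necklace $T = \Delta^{n_1} \vee \ldots \vee \Delta^{n_m}$; a pointed map $f: T \to \Delta^n_{0,n}$ is equivalently a tuple of order-preserving maps $f_i: [n_i] \to [n]$ for $1 \leq i \leq m$ with $f_i(0) = v_{i-1}$ and $f_i(n_i) = v_i$ for some non-decreasing sequence $0 = v_0 \leq v_1 \leq \ldots \leq v_m = n$ recording the images of the joints. I would use this parametrisation throughout.

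For the $\supseteq$ inclusion I plan to check that each summand on the right is visibly a sub-simplicial set of $\Lambda^n_j$. The face $\delta_i(\Delta^{n-1})$ sits inside $\Lambda^n_j$ by the very definition of $\Lambda^n_j = \bigcup_{i \neq j} \delta_i(\Delta^{n-1})$ whenever $i \neq j$. The wedge $\Delta^k \vee \Delta^{n-k}$ embeds in $\Delta^n$ as the union of the two faces spanned by $\{0,\ldots,k\}$ and $\{k,\ldots,n\}$, i.e., as $\delta_n(\Delta^{n-1}) \cup \delta_0(\Delta^{n-1})$; both summands lie in $\Lambda^n_j$ since $0 < j < n$.

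For the $\subseteq$ inclusion I would take $f: T \to (\Lambda^n_j)_{0,n}$ and split into two cases based on the joint sequence $v_0,\ldots,v_m$. If some intermediate $v_i$ lies in $\{1,\ldots,n-1\}$, then order-preservation of each $f_\ell$ forces the beads of index $\leq i$ to map into $\{0,\ldots,v_i\}$ and those of index $>i$ into $\{v_i,\ldots,n\}$, so $f$ factors through $\Delta^{v_i} \vee \Delta^{n-v_i} \hookrightarrow \Delta^n$. Otherwise every $v_i$ lies in $\{0,n\}$, and monotonicity yields a unique $i^{*}$ with $v_i = 0$ for $i \leq i^{*}$ and $v_i = n$ for $i > i^{*}$; all beads except the $(i^{*}+1)$-st then collapse to a vertex, while the middle bead has some image $V \subseteq [n]$ containing $\{0, n\}$.

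The heart of the argument, and what I expect to be the main subtlety, is this second case: one needs to extract from the hypothesis that the middle bead's image is itself a sub-simplex of $\Lambda^n_j = \bigcup_{i \neq j}\delta_i(\Delta^{n-1})$ the conclusion that $V \subseteq [n]\setminus\{i\}$ for some $i \neq j$, which combined with $0, n \in V$ forces $i \in \{1,\ldots,n-1\}\setminus\{j\}$. Since the constant beads automatically avoid such an $i$, the entire map $f$ factors through $\delta_i(\Delta^{n-1})$, finishing the proof.
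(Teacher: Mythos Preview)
Your proof is correct and follows the same two-inclusion strategy as the paper, but the case split you choose for the $\subseteq$ direction is different and in fact tidier. The paper splits on whether $f$ is surjective on vertices: if surjective, it argues that some joint must land strictly between $0$ and $n$ (otherwise a single bead would surject onto $[n]$, which would force the top simplex into $\Lambda^n_j$) and factors through a wedge; if not surjective, it asserts that $f$ factors through $\delta_i(\Delta^{n-1})$ for some $i\neq j$. That second case is a little delicate, because a non-surjective $f$ may miss \emph{only} the vertex $j$ --- for instance $T=\Delta^1\vee\Delta^1\to\Lambda^3_1$ sending the joints to $0,2,3$ --- and such an $f$ does not factor through any $\delta_i$ with $i\neq j$, although it still factors through a wedge. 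Your split on whether some joint lands in $\{1,\dots,n-1\}$ bypasses this entirely: in your second case the unique non-collapsed bead is a simplex of $\Lambda^n_j$ containing $0$ and $n$, hence must omit some $i\notin\{0,j,n\}$, and then the whole map lands in $\delta_i(\Delta^{n-1})$.

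One wording fix in your $\supseteq$ direction: the wedge $\Delta^k\vee\Delta^{n-k}$ does not embed \emph{as} $\delta_n(\Delta^{n-1})\cup\delta_0(\Delta^{n-1})$ unless $n=2$. What you need (and what your sentence is really saying) is that the two pieces are \emph{contained in} those faces: $\{0,\dots,k\}$ omits $n$ so lies in $\delta_n(\Delta^{n-1})$, and $\{k,\dots,n\}$ omits $0$ so lies in $\delta_0(\Delta^{n-1})$; both outer faces sit in $\Lambda^n_j$ since $0<j<n$. With that adjusted, the argument is complete.
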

\begin{proof}
For all $0 < k,i < n$ with $i\neq j$, we have inclusions $\Delta^{k}\vee \Delta^{n-k}\subseteq \Lambda^{n}_{j}$ and $\delta_{i}(\Delta^{n-1})\subseteq \Lambda^{n}_{j}$ in $\SSet$. It follows that
$$
\bigcup_{\substack{i=1\\ i\neq j}}^{n-1}\delta_{i}(\Delta^{n-1})_{\bullet}(0,n)\cup \bigcup_{k=1}^{n-1}(\Delta^{k}\vee \Delta^{n-k})_{\bullet}(0,n) \subseteq (\Lambda^{n}_{j})_{\bullet}(0,n)
$$

Conversely, let $f: T\rightarrow (\Lambda^{n}_{j})_{0,n}$ be a map in $\SSet_{*,*}$ with $(T,p)$ a necklace. Suppose first that $f$ is surjective on vertices. As the unique non-degenerate $n$-simplex of $\Delta^{n}$ is not contained in $\Lambda^{n}_{j}$, there must be some $k\in T$ such that $0 < f(k) < n$. Therefore, $f$ factors through $\Delta^{l}\vee \Delta^{n-l}$ with $l = f(k)$. Now suppose that $f$ is not surjective on vertices. Then $f$ must factor through $\delta_{i}(\Delta^{n-1})$ for some $i\in [n]\setminus \{j\}$. As a map in $\SSet_{*,*}$, $f$ always reaches the vertices $0$ and $n$ of $\Delta^{n}$ and thus $0 < i < n$.
\end{proof}

\begin{Ex}
The outer horns aren't as well-behaved in $\Set^{\nec^{op}}$ as the inner horns. For example, $\Lambda^{2}_{0}$ is the pushout $\Delta^{1}\amalg_{\{0\}}\Delta^{1}$ in $\SSet$, but $(\Lambda^{2}_{0})_{\bullet}(0,2)$ is isomorphic to just $\Delta^{1}_{\bullet}(0,1)$ as all maps $T\rightarrow (\Lambda^{2}_{0})_{0,2}$ in $\SSet_{*,*}$ must factor through the edge $0\rightarrow 2$ of $\Lambda^{2}_{0}$.
\end{Ex}

The following corollary expresses the advantage of working in the functor category $\mathcal{V}^{\nec^{op}}$. While not every simplex of a templicial object is represented by a templicial morphism (see Example \ref{example: free non-cell-cofibrant temp. obj.}), it is represented by a morphism in $\mathcal{V}^{nec^{op}}$.

\begin{Cor}\label{corollary: elementwise charac. of necklace morphisms}
Let $(X,S)$ be a templicial object with $a,b\in S$.
\begin{enumerate}[1.]
\item \label{item: necklace morphism from simplex} Let $(T,p)$ be a necklace. There is a bijective correspondence between morphisms\\ $\tilde{F}(T)_{\bullet}(0,p)\rightarrow X_{\bullet}(a,b)$ in $\mathcal{V}^{\nec^{op}}$ and elements $\sigma\in U(X_{T}(a,b))$.
\item \label{item: necklace morphism from horn} Let $0 < j < n$ be integers. There is a bijective correspondence between morphisms $\tilde{F}(\Lambda^{n}_{j})_{\bullet}(0,n)\rightarrow X_{\bullet}(a,b)$ in $\mathcal{V}^{\nec^{op}}$ and elements
$$
x_{k}\in U((X_{k}\otimes_{S} X_{n-k})(a,b))\quad \text{and}\quad y_{i}\in U(X_{n-1}(a,b))
$$
for all $0 < k,i < n$ with $i\neq j$, which satisfy:
\begin{itemize}
\item for all $0 < i < i' < n$ with $i\neq j\neq i'$,
$$
d_{i'-1}(y_{i}) = d_{i}(y_{i'}),
$$
\item for all $0 < k < l < n$,
$$
(\id_{X_{k}}\otimes \mu_{l-k,n-l})(x_{k}) = (\mu_{k,l-k}\otimes \id_{X_{n-l}})(x_{l})
$$
\item for all $0 < k < n-1$ and $0 < i < n$ with $i\neq j$,
$$
\mu_{k,n-k-1}(y_{i}) =
\begin{cases}
(d_{i}\otimes \id_{X_{n-k-1}})(x_{k+1}) & \text{if }i\leq k\\
(\id_{X_{k}}\otimes d_{i-k})(x_{k}) & \text{if }i > k
\end{cases}
$$
\end{itemize}
\end{enumerate}
\end{Cor}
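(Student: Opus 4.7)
For part (1), I observe that by Proposition 3.3, the functor $T_{\bullet}(0,p)\colon \nec^{op}\to \Set$ defined by $U\mapsto \SSet_{*,*}(U,T_{0,p})$ is simply the representable functor $\nec(-,T)$, since a necklace map $U\to T_{0,p}$ in $\SSet_{*,*}$ is exactly a morphism in $\nec$. Thus $\tilde{F}(T)_{\bullet}(0,p)\simeq F\circ \nec(-,T)$. Combining the enriched Yoneda lemma with the adjunction $F\dashv U$, we obtain the desired bijection
\[
\mathcal{V}^{\nec^{op}}\!\bigl(F\nec(-,T),\, X_{\bullet}(a,b)\bigr) \;\simeq\; \Set\bigl(\nec(-,T),\, U X_{\bullet}(a,b)\bigr) \;\simeq\; U(X_{T}(a,b)).
\]

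For part (2), the plan is to apply Proposition 5.1 to express $(\Lambda^{n}_{j})_{\bullet}(0,n)$ as the union in $\Set^{\nec^{op}}$ of the subfunctors $\delta_{i}(\Delta^{n-1})_{\bullet}(0,n)$ for $0<i<n$, $i\ne j$, together with $(\Delta^{k}\vee\Delta^{n-k})_{\bullet}(0,n)$ for $0<k<n$. Since $F\colon \Set\to\mathcal{V}$ preserves colimits, the same union presentation transfers to $\mathcal{V}^{\nec^{op}}$. A morphism out of a union of subobjects in a presheaf category is precisely a compatible family of morphisms on the pieces which agree on pairwise intersections (triple and higher intersections are automatic, since all pieces are subobjects of $\Delta^{n}_{\bullet}(0,n)$). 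By part (1), a morphism on $\tilde{F}(\delta_{i}\Delta^{n-1})_{\bullet}(0,n)$ corresponds to $y_{i}\in U(X_{n-1}(a,b))$, and a morphism on $\tilde{F}(\Delta^{k}\vee\Delta^{n-k})_{\bullet}(0,n)$ corresponds to $x_{k}\in U((X_{k}\otimes_{S}X_{n-k})(a,b))$.

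It then remains to compute the three types of pairwise intersections inside $\Delta^{n}$ and identify the two restriction maps in each case. For $0<i<i'<n$ with $i,i'\ne j$, the intersection $\delta_{i}(\Delta^{n-1})\cap\delta_{i'}(\Delta^{n-1})$ is the simplex on $[n]\setminus\{i,i'\}$; reindexing via $\delta_{i}$ and $\delta_{i'}$ shows this is the $d_{i'-1}$-face of the first $(n-1)$-simplex and the $d_{i}$-face of the second, yielding $d_{i'-1}(y_{i})=d_{i}(y_{i'})$. For $0<k<l<n$, the intersection $(\Delta^{k}\vee\Delta^{n-k})\cap(\Delta^{l}\vee\Delta^{n-l})$ is the three-bead necklace $\Delta^{k}\vee\Delta^{l-k}\vee\Delta^{n-l}$, and restricting along the two inert inclusions and invoking the coassociativity of $\mu$ produces the identity $(\id_{X_{k}}\otimes \mu_{l-k,n-l})(x_{k})=(\mu_{k,l-k}\otimes \id_{X_{n-l}})(x_{l})$. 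For the mixed case $\delta_{i}(\Delta^{n-1})\cap(\Delta^{k}\vee\Delta^{n-k})$, a direct vertex computation distinguishes $i\leq k$ and $i>k$; tracking which bead loses the deleted vertex and reindexing yields the two cases in the statement, where the left-hand side $\mu_{k,n-k-1}(y_{i})$ is the split of the face $y_{i}$ and the right-hand side is an inner face of $x_{k}$ or $x_{k+1}$ depending on the position of $i$ relative to $k$.

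The main technical point is correctly bookkeeping the vertex relabellings via the cofaces $\delta_{i}$ in the mixed case, and being sure that pairwise compatibility on the subfunctors is genuinely sufficient. The latter follows from the elementary fact that a union of subobjects in a presheaf category is their colimit, computed pointwise as a union of sets (applied here before applying $F$ and using that $F$ preserves colimits), so compatible pairwise data glue uniquely. Once the identifications of the restriction maps are carried out as above, this gives the bijective correspondence claimed in the corollary.
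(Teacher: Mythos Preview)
Your proposal is correct and follows essentially the same approach as the paper: part (1) is exactly the paper's argument (adjunction $F\dashv U$ plus Yoneda), and part (2) is derived from Proposition~5.1 just as the paper does. The paper's own proof of part (2) is the single sentence ``Statement 2 follows from Proposition~\ref{proposition: horns and boundaries as unions of faces}'', so your explicit computation of the three types of pairwise intersections and the resulting compatibility conditions is precisely the unpacking that the paper leaves to the reader; in particular your bookkeeping for the mixed case (distinguishing $i\le k$ versus $i>k$ and matching against $x_{k+1}$ or $x_{k}$) is correct.
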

\begin{proof}
A morphism $F(T_{\bullet}(0,p))\simeq \tilde{F}(T)_{\bullet}(0,p)\rightarrow X_{\bullet}(a,b)$ is equivalent to a map $T_{\bullet}(0,p)\rightarrow U(X_{\bullet}(a,b))$ in $\Set^{\nec^{op}}$, which corresponds to an element $\sigma\in U(X_{T}(a,b))$ by the Yoneda lemma. This shows \ref{item: necklace morphism from simplex}. Statement \ref{item: necklace morphism from horn} follows from Proposition \ref{proposition: horns and boundaries as unions of faces}.
\end{proof}

\begin{Def}\label{definition: necklace functor lifts inner horns}
Let $Y: \nec^{op}\rightarrow \mathcal{V}$ be a functor. We say $Y$ \emph{lifts inner horns} if for all $0 < j < n$ any lifting problem
\[\begin{tikzcd}
	{\tilde{F}(\Lambda^{n}_{j})_{\bullet}(0,n)} & Y \\
	{\tilde{F}(\Delta^{n})_{\bullet}(0,n)}
	\arrow[from=1-1, to=2-1]
	\arrow[from=1-1, to=1-2]
	\arrow[dashed, from=2-1, to=1-2]
\end{tikzcd}\]
has a solution in $\mathcal{V}^{\nec^{op}}$. We say $Y$ \emph{lifts inner horns uniquely} if every such lifting problem has a unique solution in $\mathcal{V}^{\nec^{op}}$.

We call a templicial object $(X,S)$ in $\mathcal{V}$ a \emph{quasi-category in $\mathcal{V}$} if the functor $X_{\bullet}(a,b)$ lifts inner horns for all $a,b\in S$. In this case, we will refer to the elements of $S$ as the \emph{objects} of $X$ and to elements of $U(X_{1}(a,b))$ as the \emph{morphisms} $a\rightarrow b$ in $X$.
\end{Def}

\begin{Rem}
Let $Y: \nec^{op}\rightarrow \mathcal{V}$ be a functor. Note that by the adjunction $F\dashv U$, $Y$ lifts inner horns in $\mathcal{V}^{\nec^{op}}$ if and only if the composite $UY: \nec^{op}\rightarrow \Set$ lifts inner horns in $\Set^{\nec^{op}}$.
\end{Rem}

As for ordinary quasi-categories, there is an elementwise characterisation of quasi-categories in $\mathcal{V}$, although it is bit more cumbersome to describe.

\begin{Prop}\label{proposition: fibrant temp. obj. charac.}
Let $(X,S)$ be a templicial object. The following statements are equivalent.
\begin{enumerate}[(1)]
\item $X$ is a quasi-category in $\mathcal{V}$.
\item Let $a,b\in S$ and $0 < j < n$. For all collections of elements $(x_{k})_{k=1}^{n-1}$, $(y_{i})^{n-1}_{i=1,i\neq j}$ satisfying the conditions of Corollary \ref{corollary: elementwise charac. of necklace morphisms}.\ref{item: necklace morphism from horn}, there is an element $z\in U(X_{n}(a,b))$ such that
$$
\mu_{k,n-k}(z) = x_{k}\quad \text{and}\quad d_{i}(z) = y_{i}
$$
for all $0 < k,i < n$ with $i\neq j$.
\end{enumerate}
\end{Prop}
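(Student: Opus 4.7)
The plan is to translate the lifting property in $\mathcal{V}^{\nec^{op}}$ into elementwise data by applying Corollary \ref{corollary: elementwise charac. of necklace morphisms} twice: part \ref{item: necklace morphism from horn} classifies morphisms out of $\tilde{F}(\Lambda^{n}_{j})_{\bullet}(0,n)$ in terms of collections $(x_{k})$ and $(y_{i})$ satisfying the compatibility equations, while part \ref{item: necklace morphism from simplex} (applied to the necklace $(\{0 < n\},n)$) identifies morphisms out of $\tilde{F}(\Delta^{n})_{\bullet}(0,n)$ with elements $z\in U(X_{n}(a,b))$. Once both sides are described in this way, the statement reduces to checking that restricting such a $z$ to the horn yields exactly the data $(\mu_{k,n-k}(z), d_{i}(z))_{k,i}$.

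For the direction $(1)\Rightarrow (2)$, I would start with a compatible collection $(x_{k})_{k}$, $(y_{i})_{i\neq j}$ as in the statement. By Corollary \ref{corollary: elementwise charac. of necklace morphisms}.\ref{item: necklace morphism from horn}, this data assembles into a morphism $h: \tilde{F}(\Lambda^{n}_{j})_{\bullet}(0,n)\rightarrow X_{\bullet}(a,b)$ in $\mathcal{V}^{\nec^{op}}$. By Definition \ref{definition: necklace functor lifts inner horns}, $h$ extends along the canonical inclusion $\tilde{F}(\Lambda^{n}_{j})_{\bullet}(0,n)\rightarrow \tilde{F}(\Delta^{n})_{\bullet}(0,n)$ to some morphism $\tilde{h}: \tilde{F}(\Delta^{n})_{\bullet}(0,n)\rightarrow X_{\bullet}(a,b)$, which by Corollary \ref{corollary: elementwise charac. of necklace morphisms}.\ref{item: necklace morphism from simplex} corresponds to an element $z\in U(X_{n}(a,b))$. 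Conversely, for $(2)\Rightarrow (1)$, any morphism $h: \tilde{F}(\Lambda^{n}_{j})_{\bullet}(0,n)\rightarrow X_{\bullet}(a,b)$ yields (by the same corollary) a compatible collection $(x_{k}), (y_{i})$; hypothesis (2) produces a filler $z$, and reversing the Yoneda identification produces the desired extension $\tilde{h}$.

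The technical core, and the step that deserves care, is verifying that the commutativity of the lifting triangle is equivalent to the system of equations $\mu_{k,n-k}(z) = x_{k}$ and $d_{i}(z) = y_{i}$. By Proposition \ref{proposition: horns and boundaries as unions of faces}, the subfunctor $(\Lambda^{n}_{j})_{\bullet}(0,n)\subseteq \Delta^{n}_{\bullet}(0,n)$ is generated by the inert necklace maps $\nu_{k,n-k}: \{0 < k < n\}\rightarrow \{0 < n\}$ (for $0 < k < n$) and the active necklace maps $\delta_{i}: \{0 < n-1\}\rightarrow \{0 < n\}$ (for $0 < i < n$, $i\neq j$). Under Construction \ref{construction: necklace cat. associated to temp. obj.}, these are sent by $X_{\bullet}(a,b)$ to the comultiplication $\mu_{k,n-k}$ and the inner face map $d_{i}$ respectively. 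Hence, tracing the commutativity of the triangle over each generator shows that the restriction of $\tilde{h}$ along $\nu_{k,n-k}$ is $\mu_{k,n-k}(z)$, which must equal $x_{k}$, and similarly along $\delta_{i}$ we obtain $d_{i}(z) = y_{i}$, giving precisely the equations in (2).

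The main obstacle I anticipate is purely bookkeeping: matching the indices and the compatibility conditions on $(x_{k}), (y_{i})$ stated in Corollary \ref{corollary: elementwise charac. of necklace morphisms}.\ref{item: necklace morphism from horn} (naturality with respect to the inclusions between the various sub-necklaces $\{0 < k < n\}$ and $\delta_{i}(\{0<n-1\})$ inside $\{0 < n\}$) with the simplicial identities and naturality of $\mu$ satisfied automatically by the data coming from $z$. Since those identities are exactly what Corollary \ref{corollary: elementwise charac. of necklace morphisms} encodes, no new computation is required beyond noting that the data derived from $z$ is automatically compatible, which makes the equivalence essentially formal once the Yoneda-type identifications are in place.
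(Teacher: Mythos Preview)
Your proposal is correct and follows exactly the approach the paper takes: the paper's proof is the single sentence ``This immediately follows from Corollary \ref{corollary: elementwise charac. of necklace morphisms},'' and what you have written is precisely the unpacking of that sentence, using parts \ref{item: necklace morphism from simplex} and \ref{item: necklace morphism from horn} together with the identification of the generators $\nu_{k,n-k}$ and $\delta_{i}$ via Proposition \ref{proposition: horns and boundaries as unions of faces} and Construction \ref{construction: necklace cat. associated to temp. obj.}.
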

\begin{proof}
This immediately follows from Corollary \ref{corollary: elementwise charac. of necklace morphisms}.
\end{proof}

\begin{Rem}
Note the similarities with the classical elementwise characterisation of a quasi-category. The elements $y_{i}$ with $0 < i < n$, $i\neq j$ represent all inner faces of the horn $\Lambda^{n}_{j}$. They still have to satisfy the same conditions as usual. However, the two outer faces of the horn are replaced by the elements $x_{k}$ with $0 < k < n$. The two new conditions of Corollary \ref{corollary: elementwise charac. of necklace morphisms}.\ref{item: necklace morphism from horn} merely express that these outer faces are glued to each other and to the inner faces in the appropriate way.

Indeed, in case $\mathcal{V} = \Set$ we recover the classical notion of a quasi-category.
\end{Rem}

\begin{Prop}\label{proposition: fibrant temp. sets are quasi-cats.}
A simplicial set is a quasi-category if and only if it is a quasi-category in $\Set$ (in the sense of Definition \ref{definition: necklace functor lifts inner horns}) .
\end{Prop}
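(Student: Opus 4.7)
My plan is to reduce both conditions to the same elementwise data. Identifying a simplicial set $K$ with its associated templicial object via Proposition~\ref{proposition: temp. sets are simp. sets}, the presheaf $\tilde F(K)_\bullet(a,b) \in \Set^{\nec^{op}}$ is given by $T \mapsto \SSet_{*,*}(T, K_{a,b})$. Every simplicial map $f \colon \Lambda^n_j \to K$ factors uniquely, on setting $a := f(0),\, b := f(n)$, through $(\Lambda^n_j)_{0,n} \to K_{a,b}$ in $\SSet_{*,*}$, and likewise for $\Delta^n$; thus $K$ is a classical quasi-category iff for every $a, b \in K_0$ and $0 < j < n$ the restriction
\[
\SSet_{*,*}\bigl((\Delta^n)_{0,n}, K_{a,b}\bigr) \longrightarrow \SSet_{*,*}\bigl((\Lambda^n_j)_{0,n}, K_{a,b}\bigr)
\]
is surjective, while $K$ is a quasi-category in $\Set$ iff the analogous restriction in $\Set^{\nec^{op}}$ is surjective. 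It therefore suffices to produce natural bijections between the simplicial hom-sets and the $\Set^{\nec^{op}}$-hom-sets compatible with restriction.

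For $\Delta^n$ this bijection is Corollary~\ref{corollary: elementwise charac. of necklace morphisms}.\ref{item: necklace morphism from simplex}: both sides equal $K_n(a,b)$. For $\Lambda^n_j$, I combine Proposition~\ref{proposition: horns and boundaries as unions of faces}, which presents $\tilde F(\Lambda^n_j)_\bullet(0,n)$ as the union of the representables on the inner faces $d_i\Delta^n$ (for $0 < i < n$, $i \neq j$) and on the wedges $\Delta^k \vee \Delta^{n-k}$ (for $0 < k < n$), with Corollary~\ref{corollary: elementwise charac. of necklace morphisms}.\ref{item: necklace morphism from horn}, which describes a morphism out of this union as data $(y_i)$ and $(x_k)$ subject to three compatibility conditions. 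A simplicial map $(\Lambda^n_j)_{0,n} \to K_{a,b}$ carries the same data: the $y_i$ are the images of the inner faces, and the $x_k$ are the pairs of bead-images of the wedges $\Delta^k \vee \Delta^{n-k}$, which all sit inside $\Lambda^n_j$ because $0 < j < n$. The three compatibility conditions then match precisely the face-face identities $d_{i'-1} f_i = d_i f_{i'}$. The outer faces $d_0 \Delta^n$ and $d_n \Delta^n$, although not among the distinguished pieces, are contained in $\Delta^1 \vee \Delta^{n-1}$ and $\Delta^{n-1} \vee \Delta^1$ respectively, so they are recovered as $\mathrm{pr}_2(x_1)$ and $\mathrm{pr}_1(x_{n-1})$.

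The main technical step is the converse of this last bijection: assembling an honest simplicial map from the abstract tuple $(x_k), (y_i)$. The intermediate vertex images $c_k \in K_0$ are read off from the $x_k$, and their consistency across $k$ is exactly the wedge-wedge compatibility; the face-wedge compatibility then ensures that the inner faces $y_i$ meet the outer faces $\mathrm{pr}_2(x_1)$ and $\mathrm{pr}_1(x_{n-1})$ correctly on their common $(n-2)$-subsimplices. Once the bijections are in place, the two quasi-category conditions are identified. In the direction ``quasi-category in $\Set$ implies classical quasi-category'', one must additionally verify that the templicial filler $z \in K_n(a,b)$, viewed as a map $\Delta^n \to K$, restricts to the given $\Lambda^n_j \to K$; this reduces to $d_0 z = \mathrm{pr}_2(x_1)$ and $d_n z = \mathrm{pr}_1(x_{n-1})$, which follow immediately from $\mu_{1,n-1}(z) = x_1$, $\mu_{n-1,1}(z) = x_{n-1}$, and the formula $\mu_{k,\ell}(z) = (d_{k+1}\cdots d_n z,\, d_0^k z)$ recorded in the remark following Proposition~\ref{proposition: colax functors in cartesian monoidal cat.}.
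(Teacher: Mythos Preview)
Your proposal is correct and follows essentially the same route as the paper: both reduce to the observation that, in $\Set$, the compatible family $(x_k)_{0<k<n}$ of Corollary~\ref{corollary: elementwise charac. of necklace morphisms}.\ref{item: necklace morphism from horn} is equivalent to the pair of outer faces $(y_0,y_n)=(\mathrm{pr}_2(x_1),\mathrm{pr}_1(x_{n-1}))$ subject to $d_{n-1}(y_0)=d_0(y_n)$. The paper states this bijection directly and concludes that condition~(2) of Proposition~\ref{proposition: fibrant temp. obj. charac.} coincides with the classical horn-filling condition, whereas you package the same computation as a comparison of hom-sets in $\SSet_{*,*}$ and $\Set^{\nec^{op}}$.
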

\begin{proof}
Let $X$ be a simplicial set, considered as a templicial set with $X_{0}$ its set of vertices. Then the assignment $(x_{k})_{k=1}^{n-1}\mapsto (x^{1}_{n-1},x^{2}_{1})$ defines a bijection between the set of all collections of elements
$$
\left(x_{k} = (x^{1}_{k},x^{2}_{k})\in X_{k}\times X_{n-k}\right)_{k=1}^{n-1}
$$
satisfying $(x^{1}_{k},\mu_{l-k,n-l}(x^{2}_{k})) = (\mu_{k,l-k}(x^{1}_{k}),x^{2}_{l})$ for all $0 < k < l < n$, and the set of all pairs $(y_{n},y_{0})\in X_{n-1}\times X_{n-1}$ satisfying $d_{n-1}(y_{0}) = d_{0}(y_{n})$. It follows that condition $(2)$ of Proposition \ref{proposition: fibrant temp. obj. charac.} is equivalent to
\begin{itemize}
\item[$(2')$] Let $0 < j < n$. Consider elements $y_{i}\in X_{n-1}$ for all $0\leq i\leq n$ with $i\neq j$, which satisfy for all $0\leq i < i'\leq n$ with $i\neq j\neq i'$:
$$
d_{i'-1}(y_{i}) = d_{i}(y_{i'})
$$
Then there is an element $z\in X_{n}$ such that $d_{i}(z) = y_{i}$ for all $0\leq i\leq n$ with $i\neq j$.
\end{itemize}
But this precisely expresses that $X$ is a quasi-category.
\end{proof}

\subsection{Nerves and quasi-categories}\label{subsection: Nerves and quasi-categories}

We show that the earlier defined templicial versions of classical nerves give examples of quasi-categories in $\mathcal{V}$.

\begin{Lem}\label{lemma: necklace temp. adjunction counit fibration}
Let $\mathcal{C}$ be a necklace category with object $A$ and $B$. Consider the canonical morphism $\epsilon: \mathcal{C}^{temp}_{\bullet}(A,B)\rightarrow \mathcal{C}_{\bullet}(A,B)$ induced by the counit of the adjunction $(-)^{nec}\dashv (-)^{temp}$. Given integers $0 < j < n$, any lifting problem in $\mathcal{V}^{\nec^{op}}$:
\[\begin{tikzcd}
	{\tilde{F}(\Lambda^{n}_{j})_{\bullet}(0,n)} & {\mathcal{C}^{temp}_{\bullet}(A,B)} \\
	{\tilde{F}(\Delta^{n})_{\bullet}(0,n)} & {\mathcal{C}_{\bullet}(A,B)}
	\arrow[from=1-1, to=2-1]
	\arrow[from=1-1, to=1-2]
	\arrow[from=2-1, to=2-2]
	\arrow["\epsilon", from=1-2, to=2-2]
	\arrow[dashed, from=2-1, to=1-2]
\end{tikzcd}\]
has a unique solution.
\end{Lem}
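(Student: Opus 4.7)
The strategy is to translate the lifting problem into an element-level question via Corollary \ref{corollary: elementwise charac. of necklace morphisms}, then solve it using the universal property of the limit defining $\mathcal{C}^{temp}_n$ in Construction \ref{construction: temp. obj. associated to necklace cat.}. The bottom map corresponds to an element $\bar{z} \in U(\mathcal{C}_{\{0<n\}}(A,B))$; the top map corresponds to families $(x_k)_{0<k<n}$ with $x_k \in U((\mathcal{C}^{temp}_k \otimes_S \mathcal{C}^{temp}_{n-k})(A,B))$ and $(y_i)_{0<i<n,\, i\neq j}$ with $y_i \in U(\mathcal{C}^{temp}_{n-1}(A,B))$, subject to the three compatibility conditions of Corollary \ref{corollary: elementwise charac. of necklace morphisms}.\ref{item: necklace morphism from horn}. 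A solution is then an element $z \in U(\mathcal{C}^{temp}_n(A,B))$ satisfying $p_n(z) = \bar z$, $\mu_{k,n-k}(z) = x_k$, and $d_i(z) = y_i$, and commutativity of the given square translates into the equations $p_{n-1}(y_i) = \mathcal{C}(d_i)(\bar z)$ and $m_{\{0<k\},\{0<l\}}(p_k \otimes p_l)(x_k) = \mathcal{C}(\nu_{k,l})(\bar z)$.

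I plan to construct $z$ from the pair $(\bar z, (x_k))$ via the limit \eqref{diagram: temp. obj. associated to neck. obj. limit}. This requires checking the two cone conditions: coassociativity $(\id \otimes \mu_{s,t})(x_{r,s+t}) = (\mu_{r,s} \otimes \id)(x_{r+s,t})$, which is exactly the second condition on the horn data; and $m_{\{0<k\},\{0<l\}}(p_k \otimes p_l)(x_k) = \mathcal{C}(\nu_{k,l})(\bar z)$, which is one of the equations extracted above from the commutativity of the lifting square, obtained by restricting to the wedge summand $\tilde F(\Delta^k \vee \Delta^{n-k})_\bullet(0,n) \hookrightarrow \tilde F(\Lambda^n_j)_\bullet(0,n)$ of Proposition \ref{proposition: horns and boundaries as unions of faces}. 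This yields a unique $z$ with $p_n(z) = \bar z$ and $\mu_{k,n-k}(z) = x_k$; the uniqueness of the lift as a whole follows at the same time, since any solution must realise these identities.

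It then remains to verify $d_i(z) = y_i$. Because $\mathcal{C}^{temp}_{n-1}$ is itself a limit of the same shape, equality can be tested by comparing images under $p_{n-1}$ and under each $\mu_{k,n-k-1}$ with $0<k<n-1$. For $p_{n-1}$, the defining relation $p_{n-1} d_i = \mathcal{C}(d_i) p_n$ gives $p_{n-1}(d_i(z)) = \mathcal{C}(d_i)(\bar z)$, which matches $p_{n-1}(y_i)$ by the other equation extracted from commutativity of the square (corresponding to the face summand $\delta_i: \Delta^{n-1} \hookrightarrow \Delta^n$). For $\mu_{k,n-k-1}$, the inductive face-map identity $\mu_{k,l} d_i = (\mathcal{C}^{temp}(f_1) \otimes \mathcal{C}^{temp}(f_2)) \mu_{p,q}$ of Construction \ref{construction: temp. obj. associated to necklace cat.} unwinds, using $\mu_{k,n-k}(z) = x_k$, to $(d_i \otimes \id)(x_{k+1})$ when $i \leq k$ and $(\id \otimes d_{i-k})(x_k)$ otherwise, and both cases match $\mu_{k,n-k-1}(y_i)$ by the third compatibility condition on the horn data.

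The main obstacle is organisational rather than conceptual: correctly lining up the three horn-data conditions with the two cone conditions of the limit description of $\mathcal{C}^{temp}_n$ (which supply $z$) and with the inductive face-map definitions (which force $d_i(z) = y_i$), together with an accurate bookkeeping of which wedge summand of $\Lambda^n_j$ carries which piece of data. Once these correspondences are in place, the argument reduces to a definition chase.
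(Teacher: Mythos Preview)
Your proposal is correct and follows essentially the same approach as the paper's proof: translate the lifting problem into element-level data via Corollary \ref{corollary: elementwise charac. of necklace morphisms}, use the limit description \eqref{diagram: temp. obj. associated to neck. obj. limit} of $\mathcal{C}^{temp}_n$ to produce the unique $z$ from $(\bar z,(x_k))$, and then verify $d_i(z)=y_i$ by testing against $p_{n-1}$ and each $\mu_{k,n-1-k}$. The paper's argument is organised identically, only with slightly less commentary on which horn condition supplies which cone condition.
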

\begin{proof}
The top horizontal morphism corresponds to some collections of elements $(x_{k})_{k=1}^{n-1}$ and $(y_{i})_{i=1,i\neq j}^{n-1}$ with $x_{k}\in U((\mathcal{C}^{temp}_{k}\otimes \mathcal{C}^{temp}_{n-k})(a,b))$ and $y_{i}\in U(\mathcal{C}^{temp}_{n-1}(a,b))$, satisfying the conditions of Corollary \ref{corollary: elementwise charac. of necklace morphisms}.\ref{item: necklace morphism from horn}. Moreover, the bottom horizontal morphism corresponds to an element $z'\in U(\mathcal{C}_{\{0 < n\}}(a,b))$ and the commutativity of the diagram comes down to the condition that $\mathcal{C}(\nu_{k,n-k})(z') = m_{\{0 < k\},\{0 < n-k\}}(p_{k}\otimes p_{n-k})(x_{k})$ and $\mathcal{C}(\delta_{i})(z') = p_{n-1}(y_{i})$ for all $0 < k,i < n$ with $i\neq j$.

Then by the limit diagram \eqref{diagram: temp. obj. associated to neck. obj. limit}, there exists a unique element $z\in U(\mathcal{C}^{temp}_{n}(a,b))$ such that $\mu_{k,n-k}(z) = x_{k}$ for all $0 < k < n$, and $p_{n}(z) = z'$. Again by \eqref{diagram: temp. obj. associated to neck. obj. limit}, we have that for all $0 < k,i < n$ with $i\neq j$:
\begin{align*}
&\mu_{k,n-1-k}(d_{i}(z)) =
\begin{cases}
(d_{i}\otimes \id_{\mathcal{C}^{temp}_{n-k-1}})(\mu_{k+1,n-k}(z)) & \text{if }i\leq k\\
(\id_{\mathcal{C}^{temp}_{k}}\otimes d_{i-k})(\mu_{k,n-k}(z)) & \text{if }i > k
\end{cases}
= \mu_{k,n-1-k}(y_{i})\\
&p_{n-1}(d_{i}(z)) = \mathcal{C}(\delta_{i})p_{n}(z) = \mathcal{C}(\delta_{i})(z') = p_{n-1}(y_{i})
\end{align*}
and thus $d_{i}(z) = y_{i}$. Hence, the element $z$ determines the unique solution to the lifting problem.
\end{proof}

\begin{Prop}\label{proposition: fibrant temp. obj. assoc. to necklace cat.}
Let $\mathcal{C}$ be a necklace category with object set $S$. Suppose that for all $A,B\in \Ob(\mathcal{C})$, $\mathcal{C}_{\bullet}(A,B)$ lifts inner horns. Then $\mathcal{C}^{temp}$ is a quasi-category in $\mathcal{V}$.
\end{Prop}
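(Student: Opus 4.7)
The plan is to reduce the inner-horn lifting property of $\mathcal{C}^{temp}_{\bullet}(A,B)$ to that of $\mathcal{C}_{\bullet}(A,B)$, using Lemma \ref{lemma: necklace temp. adjunction counit fibration} as the key technical input. That lemma expresses precisely that the canonical morphism $\epsilon\colon \mathcal{C}^{temp}_{\bullet}(A,B)\to \mathcal{C}_{\bullet}(A,B)$ has the \emph{unique} right lifting property against the inner horn inclusions $\tilde{F}(\Lambda^{n}_{j})_{\bullet}(0,n)\hookrightarrow \tilde{F}(\Delta^{n})_{\bullet}(0,n)$ in $\mathcal{V}^{\nec^{op}}$. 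In other words, $\epsilon$ behaves like a ``trivial inner fibration'' with respect to our class of test objects, so horn fillers downstairs lift uniquely upstairs.

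Fix $A,B\in \Ob(\mathcal{C})$ and integers $0<j<n$, and suppose we are given an arbitrary morphism $\varphi\colon \tilde{F}(\Lambda^{n}_{j})_{\bullet}(0,n)\to \mathcal{C}^{temp}_{\bullet}(A,B)$ in $\mathcal{V}^{\nec^{op}}$; the goal is to extend $\varphi$ along the horn inclusion. First, post-compose with $\epsilon$ to obtain $\epsilon\circ\varphi\colon \tilde{F}(\Lambda^{n}_{j})_{\bullet}(0,n)\to \mathcal{C}_{\bullet}(A,B)$. By the standing hypothesis on $\mathcal{C}$, the functor $\mathcal{C}_{\bullet}(A,B)$ lifts inner horns, so there exists $\psi\colon \tilde{F}(\Delta^{n})_{\bullet}(0,n)\to \mathcal{C}_{\bullet}(A,B)$ whose restriction along the horn inclusion equals $\epsilon\circ\varphi$.

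The pair $(\varphi,\psi)$ then fits into a commutative square of exactly the shape considered in Lemma \ref{lemma: necklace temp. adjunction counit fibration}:
\[\begin{tikzcd}
{\tilde{F}(\Lambda^{n}_{j})_{\bullet}(0,n)} & {\mathcal{C}^{temp}_{\bullet}(A,B)} \\
{\tilde{F}(\Delta^{n})_{\bullet}(0,n)} & {\mathcal{C}_{\bullet}(A,B)}
\arrow["\varphi", from=1-1, to=1-2]
\arrow[hook, from=1-1, to=2-1]
\arrow["\psi"', from=2-1, to=2-2]
\arrow["\epsilon", from=1-2, to=2-2]
\arrow[dashed, from=2-1, to=1-2]
\end{tikzcd}\]
The lemma produces a (unique) diagonal $\widetilde{\varphi}\colon \tilde{F}(\Delta^{n})_{\bullet}(0,n)\to \mathcal{C}^{temp}_{\bullet}(A,B)$ making both triangles commute. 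The upper triangle is exactly the statement that $\widetilde{\varphi}$ extends $\varphi$ along the horn inclusion, which is the required filler. Since $A,B,n,j$ were arbitrary, Definition \ref{definition: necklace functor lifts inner horns} shows that $\mathcal{C}^{temp}$ is a quasi-category in $\mathcal{V}$.

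The whole argument is a formal instance of the standard ``weak factorisation system'' reasoning: lift through $\epsilon$ a horn-filler that already exists in the base. Consequently, no real difficulty arises in the present proposition itself; the genuine content has been isolated in Lemma \ref{lemma: necklace temp. adjunction counit fibration}, where one must verify that the inductive limit description \eqref{diagram: temp. obj. associated to neck. obj. limit} of $\mathcal{C}^{temp}_{n}$ interacts correctly with horn data. The only potential subtlety here is making sure the square above truly commutes, but that is immediate from the choice of $\psi$ as an extension of $\epsilon\circ\varphi$.
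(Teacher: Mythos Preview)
Your proof is correct and follows exactly the paper's approach: the paper's proof is the single line ``This is immediate from Lemma \ref{lemma: necklace temp. adjunction counit fibration},'' and you have simply unpacked that immediacy by writing out the standard lifting-through-$\epsilon$ argument.
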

\begin{proof}
This is immediate from Lemma \ref{lemma: necklace temp. adjunction counit fibration}.
\end{proof}

\begin{Cor}
For any small $\mathcal{V}$-category $\mathcal{C}$, the templicial object $N_{\mathcal{V}}(\mathcal{C})$ is a quasi-category in $\mathcal{V}$.  
\end{Cor}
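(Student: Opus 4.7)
The plan is to verify the elementwise characterisation of Proposition \ref{proposition: fibrant temp. obj. charac.} directly, exploiting the strong monoidality of $N_{\mathcal{V}}(\mathcal{C})$ established in Proposition \ref{proposition: nerve fully faithful}: that is, $N_{\mathcal{V}}(\mathcal{C})_{n} = \mathcal{C}^{\otimes n}$, every comultiplication $\mu_{k,l}\colon \mathcal{C}^{\otimes k+l}\xrightarrow{\sim} \mathcal{C}^{\otimes k}\otimes_{S}\mathcal{C}^{\otimes l}$ is a canonical isomorphism, and the inner face morphisms are induced by the composition of $\mathcal{C}$. Since all the $\mu_{k,l}$ are invertible, I expect the filler to be both forced and unique.

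Fix $a,b\in \Ob(\mathcal{C})$ and $0 < j < n$, and take horn data $(x_{k})_{0<k<n}$ and $(y_{i})_{0<i<n,\, i\neq j}$ satisfying the compatibility conditions of Corollary \ref{corollary: elementwise charac. of necklace morphisms}.\ref{item: necklace morphism from horn}. I would set the candidate filler $z := \mu_{1,n-1}^{-1}(x_{1}) \in U(\mathcal{C}^{\otimes n}(a,b))$ and verify two claims. First, $\mu_{k,n-k}(z) = x_{k}$ for every $0 < k < n$: the two factorisations $(\id\otimes \mu_{l-k,n-l})\circ \mu_{k,n-k} = \mu_{k,l-k,n-l} = (\mu_{k,l-k}\otimes \id)\circ \mu_{l,n-l}$ of the canonical triple comultiplication, combined with the horn compatibility $(\id\otimes \mu_{l-k,n-l})(x_{k}) = (\mu_{k,l-k}\otimes \id)(x_{l})$ and the invertibility of $\mu_{k,l-k,n-l}$, force $\mu_{k,n-k}^{-1}(x_{k})$ to be independent of $k$. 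Second, $d_{i}(z) = y_{i}$ for $0 < i < n$ with $i\neq j$: picking any $k$ with $1\leq k\leq n-2$ (such $k$ exists when $n\geq 3$, and for $n=2$ the set of $y_{i}$ is vacuous), the naturality of $\mu$ with respect to inner faces yields $\mu_{k,n-1-k}(d_{i}(z)) = (d_{i}\otimes \id)(x_{k+1})$ if $i\leq k$ and $(\id\otimes d_{i-k})(x_{k})$ if $i > k$; by the third compatibility condition this equals $\mu_{k,n-1-k}(y_{i})$, and invertibility of $\mu_{k,n-1-k}$ concludes.

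The main obstacle is really only careful index bookkeeping; conceptually, strong monoidality reduces the lifting to a tautology, since the $\mu_{k,l}$ let one reconstruct an $n$-simplex from any one of its comultiplicands. As a byproduct, the filler $z$ is uniquely determined by $x_{1}$ alone, so $N_{\mathcal{V}}(\mathcal{C})$ in fact lifts inner horns uniquely, mirroring the classical fact that nerves of ordinary categories are $2$-coskeletal.
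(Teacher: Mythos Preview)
Your proof is correct but takes a genuinely different route from the paper's. The paper's one-line argument invokes Propositions \ref{proposition: nerve is temp functor of constant functor} and \ref{proposition: fibrant temp. obj. assoc. to necklace cat.}: it realises $N_{\mathcal{V}}(\mathcal{C})$ as $\underline{\mathcal{C}}^{temp}$ via the necklace-category embedding, observes that the constant functors $\underline{\mathcal{C}(A,B)}$ trivially lift inner horns, and then applies the general result that $(-)^{temp}$ carries any necklace category with this hom-wise lifting property to a quasi-category in $\mathcal{V}$. You instead bypass the necklace-category machinery entirely and verify the elementwise criterion of Proposition \ref{proposition: fibrant temp. obj. charac.} directly from strong monoidality, inverting the $\mu_{k,l}$ to reconstruct the filler. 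Your approach is more elementary and self-contained, and as you note it immediately yields \emph{unique} lifting, which the paper proves separately in Proposition \ref{proposition: nerve equiv.}. The paper's route, on the other hand, is designed to be reused: the same two-step scheme (pass to $\mathcal{V}\Cat_{\nec}$, check hom-wise horn lifting, apply $(-)^{temp}$) is exactly what drives the proof of the main result, Corollary \ref{corollary: hc-nerve preserves fibrant objects}, where strong monoidality is no longer available.
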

\begin{proof}
This follows from Propositions \ref{proposition: nerve is temp functor of constant functor} and \ref{proposition: fibrant temp. obj. assoc. to necklace cat.}.
\end{proof}

\begin{Cor}\label{corollary: hc-nerve preserves fibrant objects}
Let $\mathcal{C}$ be small simplicial $\mathcal{V}$-category. Assume that for all objects $A$ and $B$ of $\mathcal{C}$, the simplicial set $U(\mathcal{C}(A,B))$ is a Kan complex. Then the templicial object $N^{hc}_{\mathcal{V}}(\mathcal{C})$ is a quasi-category in $\mathcal{V}$.
\end{Cor}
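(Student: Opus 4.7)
Since $N^{hc}_{\mathcal{V}}(\mathcal{C})=\mathfrak{n}(\mathcal{C})^{temp}$ by Definition \ref{definition: temp. hc-nerve}, Proposition \ref{proposition: fibrant temp. obj. assoc. to necklace cat.} reduces the statement to showing that for every pair $A,B\in\Ob(\mathcal{C})$ the functor $\mathfrak{n}(\mathcal{C})_{\bullet}(A,B)\colon\nec^{op}\to\mathcal{V}$ lifts inner horns in $\mathcal{V}^{\nec^{op}}$. The plan is to transport any such lifting problem across the chain of adjunctions $\mathfrak{s}\dashv\mathfrak{n}$ and $F\dashv U$ until it becomes the classical lifting statement underlying the theorem of Cordier--Porter.

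Fix $0<j<n$. By Construction \ref{construction: ajunction between simplicial and necklace objects}, the lifting problem for $\tilde{F}(\Lambda^{n}_{j})_{\bullet}(0,n)\hookrightarrow\tilde{F}(\Delta^{n})_{\bullet}(0,n)$ against $\mathfrak{n}(\mathcal{C}(A,B))$ in $\mathcal{V}^{\nec^{op}}$ is naturally equivalent, via $\mathfrak{s}\dashv\mathfrak{n}$, to a lifting problem in $S\mathcal{V}$ for its $\mathfrak{s}$-image against $\mathcal{C}(A,B)$. Using the natural isomorphism $\mathfrak{s}\circ F\simeq F\circ \mathfrak{s}$ established in the proof of Proposition \ref{proposition: free and categorification commute}, together with Propositions \ref{proposition: categorification as weighted colimit} and \ref{proposition: categorification of a necklace}, this $\mathfrak{s}$-image is identified with $F$ applied to the classical inclusion $\mathfrak{C}[\Lambda^{n}_{j}](0,n)\hookrightarrow \mathfrak{C}[\Delta^{n}](0,n)\simeq N(\mathcal{P}_{n})$. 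Applying in turn the levelwise adjunction $F\dashv U\colon\SSet\leftrightarrows S\mathcal{V}$ recasts the problem as that of extending an arbitrary simplicial map $\mathfrak{C}[\Lambda^{n}_{j}](0,n)\to U(\mathcal{C}(A,B))$ along the inclusion $\mathfrak{C}[\Lambda^{n}_{j}](0,n)\hookrightarrow N(\mathcal{P}_{n})$.

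The remaining step invokes the classical result of Cordier--Porter \cite{cordier1986Vogt} that for $0<j<n$ this inclusion is an acyclic cofibration in the Kan--Quillen model structure on $\SSet$; since $U(\mathcal{C}(A,B))$ is a Kan complex by hypothesis, the required extension exists. This is the sole non-formal input to the proof: every other step is a transport across adjunctions already set up in the paper. The most technical point needing careful verification is that $\mathfrak{s}$ is computed by a weighted colimit that intertwines strictly with $F$, so that the $\mathfrak{s}$-image of the templicial inner-horn inclusion really does coincide with $F$ applied to the classical inclusion $\mathfrak{C}[\Lambda^{n}_{j}](0,n)\hookrightarrow N(\mathcal{P}_{n})$; once that identification is in place, the rest is bookkeeping.
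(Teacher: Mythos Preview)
Your proof is correct and follows essentially the same approach as the paper's own proof: reduce via Proposition \ref{proposition: fibrant temp. obj. assoc. to necklace cat.} to showing $\mathfrak{n}(\mathcal{C})_{\bullet}(A,B)$ lifts inner horns, transpose across $\mathfrak{s}\dashv\mathfrak{n}$ and $F\dashv U$ using the compatibility established in Proposition \ref{proposition: free and categorification commute}, and conclude from the classical fact that $\mathfrak{C}[\Lambda^{n}_{j}](0,n)\hookrightarrow\mathfrak{C}[\Delta^{n}](0,n)$ is anodyne. The only cosmetic difference is that the paper cites \cite[Proposition 1.1.5.10]{lurie2009higher} and \cite[Tag 00LH]{kerodon} for this last step, whereas you attribute it to Cordier--Porter.
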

\begin{proof}
By Proposition \ref{proposition: fibrant temp. obj. assoc. to necklace cat.}, it suffices to check that for all $A,B\in \Ob(\mathcal{C})$, the functor
$$
\mathfrak{n}(\mathcal{C}(A,B))_{\bullet} = [FN\mathcal{P}_{(-)},\mathcal{C}(A,B)]: \nec^{op}\rightarrow \mathcal{V}
$$
lifts inner horns in $\mathcal{V}^{\nec^{op}}$. By the adjunction $\mathfrak{s}\dashv \mathfrak{n}$, this is equivalent to showing that for all $0 < j < n$, every morphism $\mathfrak{s}(\tilde{F}(\Lambda^{n}_{j})_{\bullet}(0,n))\rightarrow \mathcal{C}(A,B)$ in $S\mathcal{V}$ extends to $\mathfrak{s}(\tilde{F}(\Delta^{n})_{\bullet}(0,n))$. Now by Proposition \ref{proposition: free and categorification commute} and the adjunction $F\dashv U$, this is further equivalent to the following lifting problem in $\SSet$:
\[\begin{tikzcd}
	{\mathfrak{C}[\Lambda^{n}_{j}](0,n)} & {U(\mathcal{C}(A,B))} \\
	{\mathfrak{C}[\Delta^{n}](0,n)}
	\arrow[from=1-1, to=1-2]
	\arrow[from=1-1, to=2-1]
	\arrow[dashed, from=2-1, to=1-2]
\end{tikzcd}\]
This has a solution by the fact that $U(\mathcal{C}(A,B))$ is a Kan complex, as was shown in \cite[Proposition 1.1.5.10]{lurie2009higher} and is given in more detail in \cite[Tag 00LH]{kerodon} (beware that in the latter, the notation $\mathrm{Path}$ is used instead of $\mathfrak{C}$).
\end{proof}

\begin{Cor}\label{corollary: fibrant temp. obj. has underlying quasi-cat.}
Let $X$ be a quasi-category in $\mathcal{V}$. Then $\tilde{U}(X)$ is a quasi-category.
\end{Cor}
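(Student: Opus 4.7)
The plan is to combine the string of adjunctions $\tilde{F}\dashv \tilde{U}$, $(-)^{nec}\dashv (-)^{temp}$, $\mathcal{F}\dashv \mathcal{U}$, and $F\dashv U$ to transport the classical inner horn lifting problem into $\mathcal{V}^{\nec^{op}}$, where the hypothesis on $X$ applies directly. A simplicial map $\Lambda^n_j\to \tilde{U}(X)$ corresponds via $\tilde{F}\dashv \tilde{U}$ to a templicial morphism $\tilde{F}(\Lambda^n_j)\to X$; by Theorem \ref{theorem: nec functor fully faithful left-adjoint}, Proposition \ref{proposition: free and nec functors commute}, and $\mathcal{F}\dashv \mathcal{U}$, this is the same datum as a necklace functor $(\Lambda^n_j)^{nec}\to \mathcal{U}(X^{nec})$. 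The analogous identification holds for $\Delta^n$, so the task reduces to extending any such necklace functor along $(\Lambda^n_j)^{nec}\hookrightarrow (\Delta^n)^{nec}$.

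Next I would analyse how these two necklace categories differ. They share the object set $[n]$, and their hom-functors $(-)_\bullet(i,i')$ agree whenever $(i,i')\neq (0,n)$: any necklace map $T\to (\Delta^n)_{i,i'}$ in $\SSet_{*,*}$ has its successive bead endpoints $i = u_0\leq u_1\leq \ldots\leq u_k = i'$ lying in $[i,i']$, hence factors through the subsimplex $\Delta^{[i,i']}\subseteq \Delta^n$. For $(i,i')\neq(0,n)$ and $0<j<n$, this proper face is contained in some face $d_l$ with $l\neq j$, and therefore in $\Lambda^n_j$. At $(0,n)$, the horn contributes the proper subfunctor $(\Lambda^n_j)_\bullet(0,n)\subsetneq (\Delta^n)_\bullet(0,n)\simeq \nec(-,\{0<n\})$ of Proposition \ref{proposition: horns and boundaries as unions of faces}.

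The extension problem thus reduces to producing an extension of the single component $(\Lambda^n_j)_\bullet(0,n)\to UX_\bullet(a,b)$ (with $a,b$ the images of $0$ and $n$ under the vertex map) to $(\Delta^n)_\bullet(0,n)\to UX_\bullet(a,b)$ in $\Set^{\nec^{op}}$. By $F\dashv U$ this is equivalent to extending $\tilde{F}(\Lambda^n_j)_\bullet(0,n)\to X_\bullet(a,b)$ to $\tilde{F}(\Delta^n)_\bullet(0,n)\to X_\bullet(a,b)$ in $\mathcal{V}^{\nec^{op}}$, which is precisely what the hypothesis that $X$ is a quasi-category in $\mathcal{V}$ provides.

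The main subtlety is to verify that this extension actually respects the composition of the necklace category $(\Delta^n)^{nec}$, so that it really assembles into a necklace functor. The only new composition instances involve the $(0,n)$-hom, namely $(\Delta^n)_\bullet(0,k)\otimes_{Day}(\Delta^n)_\bullet(k,n)\to (\Delta^n)_\bullet(0,n)$ for $0<k<n$. The image of each such map in $(\Delta^n)_\bullet(0,n)$ consists of necklace maps factoring through $\Delta^k\vee \Delta^{n-k}\hookrightarrow \Delta^n$, and thus lies inside $(\Lambda^n_j)_\bullet(0,n)$; on this image the extension coincides with the original necklace functor, so composition compatibility is inherited for free. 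Running the adjunctions in reverse then produces the desired filler $\Delta^n\to \tilde{U}(X)$.
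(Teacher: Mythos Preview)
Your argument is correct. You unwind the chain of adjunctions to translate an inner horn lifting problem for $\tilde{U}(X)$ into an extension problem of necklace functors $(\Lambda^{n}_{j})^{nec}\hookrightarrow (\Delta^{n})^{nec}\to \mathcal{U}(X^{nec})$, and then localise the difference between these two necklace categories to the single hom $(0,n)$ using Proposition \ref{proposition: horns and boundaries as unions of faces}; the hypothesis on $X$ gives exactly the required extension at that hom, and you verify that composition is automatically preserved because the relevant composites already land in the horn.

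The paper takes a more packaged route: it invokes the factorisation $\tilde{U}\simeq (-)^{temp}\circ \mathcal{U}\circ (-)^{nec}$ from Proposition \ref{proposition: free and nec functors commute}, observes that each hom of $\mathcal{U}(X^{nec})$ lifts inner horns in $\Set^{\nec^{op}}$, and then applies Proposition \ref{proposition: fibrant temp. obj. assoc. to necklace cat.} (with $\mathcal{V}=\Set$) together with Proposition \ref{proposition: fibrant temp. sets are quasi-cats.}. The lifting work you do by hand --- comparing $(\Lambda^{n}_{j})^{nec}$ with $(\Delta^{n})^{nec}$ and checking composition compatibility --- is precisely what is encapsulated in Lemma \ref{lemma: necklace temp. adjunction counit fibration}, which underlies Proposition \ref{proposition: fibrant temp. obj. assoc. to necklace cat.}. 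So the two arguments are really the same idea: the paper reuses its general machinery, while your version is self-contained and makes the mechanism visible without appealing to the $(-)^{temp}$ construction.
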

\begin{proof}
This follows from Propositions \ref{proposition: free and nec functors commute}, \ref{proposition: fibrant temp. sets are quasi-cats.} and \ref{proposition: fibrant temp. obj. assoc. to necklace cat.}.
\end{proof}

The converse to Corollary \ref{corollary: fibrant temp. obj. has underlying quasi-cat.} does not hold in general.

\begin{Ex}
Consider the over category $\mathcal{V} = \Ab/\mathbb{Z}$ of abelian groups $A$ with a $\mathbb{Z}$-linear map $p: A\rightarrow \mathbb{Z}$. Then $\mathcal{V}$ is bicomplete and symmetric monoidal closed with monoidal unit given by $\id_{\mathbb{Z}}: \mathbb{Z}\rightarrow \mathbb{Z}$. The forgetful functor $U: \mathcal{V}\rightarrow \Set$ associates to every map $p: A\rightarrow \mathbb{Z}$ the set $\{a\in A\mid p(a) = 1\}$.

Now consider the simplicial set $\Delta^{2}\amalg_{\{0,2\}}\Lambda^{2}_{1}$:
\begin{center}
\begin{tikzpicture}[scale=1.5]
\filldraw[fill=gray,opacity=0.3]
(0,0) -- (0.5,0.7) -- (1,0);
\filldraw
(0,0) circle (1pt) node[left]{$a$}
(0.5,0.7) circle (1pt) node[above]{$c_{1}$}
(0.5,-0.7) circle (1pt) node[below]{$c_{2}$}
(1,0) circle (1pt) node[right]{$b$}
(0.5,0.3) node{$w$};
\draw[-latex] (0,0) -- node[left,pos=0.6]{$f_{1}$} (0.5,0.7);
\draw[-latex] (0,0) -- node[left,pos=0.6]{$f_{2}$} (0.5,-0.7);
\draw[-latex] (0.5,0.7) -- node[right,pos=0.4]{$g_{1}$} (1,0);
\draw[-latex] (0.5,-0.7) -- node[right,pos=0.4]{$g_{2}$} (1,0);
\draw[-latex] (0,0) -- node[below,pos=0.5]{$h$} (1,0);
\end{tikzpicture}
\end{center}
Set $X = \tilde{F}(\Delta^{2}\amalg_{\{0,2\}}\Lambda^{2}_{1})\in S_{\otimes}\Ab$. We can promote $X$ to a templicial object in $\mathcal{V}$ by equipping it with $\mathbb{Z}$-linear maps $p: X_{n}(x,y)\rightarrow \mathbb{Z}$ defined as follows:
$$
p(f_{1}) = p(f_{2}) = p(g_{2}) = p(h) = 1,\quad p(g_{2}) = 2\quad \text{and}\quad p(w) = 1
$$
Then for example $U(X_{1}(a,c_{2})) = \{f_{2}\}$ but $U(X_{1}(c_{2},b)) = \emptyset$. Consider the functor $\tilde{U}: \ts\mathcal{V}\rightarrow \SSet$ as induced by $U$ above (not by $\Ab\rightarrow \Set$). Then it follows that $\tilde{U}(X)\simeq \Delta^{2}\amalg_{\{0\}}\Delta^{1}$, which is clearly a quasi-category.

However, $X$ is not a quasi-category in $\mathcal{V}$. To see this, consider the element
$$
\alpha = f_{2}\otimes g_{2} - f_{1}\otimes g_{1} \in U((X_{1}\otimes X_{1})(a,b))
$$
(note that indeed, $(p\otimes p)(\alpha) = p(f_{2})p(g_{2}) - p(f_{1})p(g_{1}) = 1$). But there exists no element $\xi\in U(X_{2}(a,b))$ such that $\mu_{1,1}(\xi) = \alpha$.
\end{Ex}

We end this subsection by characterising the essential image of the templicial nerve functor $N_{\mathcal{V}}: \mathcal{V}\Cat\rightarrow \ts\mathcal{V}$ in terms of horn fillings.

\begin{Prop}\label{proposition: nerve equiv.}
Let $(X,S)\in\ts\mathcal{V}$. Consider the following statements.
\begin{enumerate}[(1)]
\item $(X,S)$ is isomorphic to the templicial nerve of a small $\mathcal{V}$-category.
\item For all $a,b\in S$, $X_{\bullet}(a,b)$ lifts inner horns uniquely.
\end{enumerate}
Then $(1)$ implies $(2)$. Moreover, if the functor $U: \mathcal{V}\rightarrow \Set$ is conservative, then $(1)$ and $(2)$ are equivalent.
\end{Prop}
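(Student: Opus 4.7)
The plan is to reformulate condition (1) via Proposition \ref{proposition: nerve fully faithful}, which characterises the essential image of $N_{\mathcal{V}}$ as those $X$ for which all comultiplications $\mu_{k,l}\colon X_{k+l}\to X_{k}\otimes_{S} X_{l}$ are isomorphisms, and to compare this to the elementwise horn filling description of Proposition \ref{proposition: fibrant temp. obj. charac.}.

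For the implication $(1)\Rightarrow (2)$: assuming every $\mu_{k,l}$ is an isomorphism, given valid horn data $\bigl((x_{k})_{0<k<n}, (y_{i})_{0<i<n,\,i\neq j}\bigr)$ as in Proposition \ref{proposition: fibrant temp. obj. charac.}, I would define $z := (\mu_{1,n-1})^{-1}(x_{1})$ and verify $\mu_{k,n-k}(z)=x_{k}$ and $d_{i}(z)=y_{i}$ using coassociativity of $\mu$ together with the compatibility among the $x_{k}$'s, and naturality of $\mu$ with respect to inner face maps together with the compatibility between the $y_{i}$'s and the $x_{k}$'s. Uniqueness of $z$ is immediate from injectivity of $\mu_{1,n-1}$.

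For $(2)\Rightarrow(1)$ assuming $U$ is conservative: conservativity reflects isomorphisms componentwise in $\mathcal{V}\Quiv_{S}$, so it suffices to show each $U((\mu_{k,l})_{a,b})$ is bijective. I would proceed by induction on $n=k+l$. The case $n=2$ follows directly: the elementwise $\Lambda^{2}_{1}$-filling property of Proposition \ref{proposition: fibrant temp. obj. charac.} involves only a datum $x_{1}\in U((X_{1}\otimes X_{1})(a,b))$ with no auxiliary constraint, and unique filling reads exactly as bijectivity of $U((\mu_{1,1})_{a,b})$. For the inductive step, assuming $\mu_{k',l'}$ iso for $k'+l'<n$, the coassociativity relation $(\id\otimes \mu_{k-1,n-k})\mu_{1,n-1}=(\mu_{1,k-1}\otimes \id)\mu_{k,n-k}$ reduces the general statement for $k+l=n$ to the single statement that $U((\mu_{1,n-1})_{a,b})$ is bijective.

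To establish the latter, given $\eta\in U((X_{1}\otimes X_{n-1})(a,b))$, I would set $x_{1}:=\eta$ and use the inductive isomorphisms to define the remaining data $x_{k}$ and $y_{i}$ of a $\Lambda^{n}_{1}$-horn as the unique elements making the compatibility relations of Proposition \ref{proposition: fibrant temp. obj. charac.} hold; unique $\Lambda^{n}_{1}$-filling would then produce a $z\in U(X_{n}(a,b))$ with $\mu_{1,n-1}(z)=\eta$. The main obstacle will be a coherence check that the $\eta$-extension actually satisfies \emph{all} compatibility relations, which should be a formal consequence of coassociativity and $d_{i}$-naturality of $\mu$. Uniqueness of $z$ given only $\eta$ follows because any two candidate lifts share all $\mu_{k,n-k}$- and $d_{i}$-images (by coassociativity, naturality, and the inductive isomorphisms), so unique filling forces them equal.
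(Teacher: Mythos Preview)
Your proposal is correct. For $(2)\Rightarrow(1)$ it is essentially the paper's argument: both proceed by induction on $n$, use the inductive isomorphisms to manufacture full horn data from a single $x_{k}$, invoke unique filling to get bijectivity of $U(\mu_{k,n-k})$, and conclude via conservativity of $U$. The only cosmetic difference is that you reduce to the case $k=1$ and fill a $\Lambda^{n}_{1}$, whereas the paper treats each $k$ directly by filling a $\Lambda^{n}_{k}$; the coherence checks you flag are exactly the ones implicit in the paper's ``It follows that the elements \ldots\ satisfy the conditions of Corollary~\ref{corollary: elementwise charac. of necklace morphisms}.\ref{item: necklace morphism from horn}.''

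For $(1)\Rightarrow(2)$ your route genuinely differs. You appeal to Proposition~\ref{proposition: nerve fully faithful} (strong monoidality of $X$) and argue elementwise: set $z=(\mu_{1,n-1})^{-1}(x_{1})$ and recover the remaining $x_{k}$ and $y_{i}$ from coassociativity and naturality. The paper instead uses the factorisation $N_{\mathcal{V}}\simeq (-)^{temp}\circ\underline{(-)}$ (Proposition~\ref{proposition: nerve is temp functor of constant functor}) together with Lemma~\ref{lemma: necklace temp. adjunction counit fibration}, reducing the problem to the trivial observation that a constant functor $\underline{\mathcal{C}}(A,B)$ lifts inner horns uniquely. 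Your argument is more elementary and self-contained; the paper's is shorter given the ambient machinery and illustrates the utility of the necklace-category viewpoint.
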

\begin{proof}
Let $\mathcal{C}$ be a small $\mathcal{V}$-category. We wish to show that $N_{\mathcal{V}}(\mathcal{C})_{\bullet}(A,B)$ lifts inner horns uniquely for all $A,B\in \Ob(\mathcal{C})$. Since $N_{\mathcal{V}}\simeq (-)^{temp}\circ \underline{(-)}$ (Proposition \ref{proposition: nerve is temp functor of constant functor}), it suffices by Lemma \ref{lemma: necklace temp. adjunction counit fibration} to note that the lifting problem
\[\begin{tikzcd}
	{\tilde{F}(\Lambda^{n}_{j})_{\bullet}(0,n)} & {\underline{\mathcal{C}}(A,B)} \\
	{\tilde{F}(\Delta^{n})_{\bullet}(0,n)}
	\arrow[from=1-1, to=1-2]
	\arrow[from=1-1, to=2-1]
	\arrow[dashed, from=2-1, to=1-2]
\end{tikzcd}\]
has a unique solution for all $0 < j < n$, which is trivial.

Assume that $(2)$ holds and that $U$ is conservative. By \eqref{equation: universal monoid}, it suffices to show that each comultiplication morphism $\mu_{k,n-k}$ with $0 < k < n$ is an isomorphism. Take $x_{k}\in U(X_{k}\otimes_{S} X_{n-k})$. By induction on $n$, we can define for any $0 < l < n$ with $l\neq k$:
$$
x_{l} =
\begin{cases}
(\id_{X_{l}}\otimes \mu^{-1}_{k-l,n-k})(\mu_{l,k-l}\otimes \id_{X_{n-k}})(x_{k}) & \text{if }l < k\\
(\mu^{-1}_{k,l-k}\otimes \id_{X_{n-l}})(\id_{X_{k}}\otimes \mu_{l-k,n-l})(x_{k}) & \text{if }l > k
\end{cases}
$$
Further set, for all $0 < i < n$ with $i\neq k$:
$$
y_{i} =
\begin{cases}
\mu^{-1}_{k-1,n-k}(d_{i}\otimes \id_{X_{n-k}})(x_{k}) & \text{if }i < k\\
\mu^{-1}_{k,n-k-1}(\id_{X_{k}}\otimes d_{i-k})(x_{k}) & \text{if }i > k
\end{cases}
$$
It follows that the elements $(x_{l})^{n-1}_{l=1}$ and $(y_{i})^{n-1}_{i = 1, i\neq k}$ satisfy the conditions of Corollary \ref{corollary: elementwise charac. of necklace morphisms}.\ref{item: necklace morphism from horn} and thus there is a unique element $z\in U(X_{n}(a,b))$ such that $\mu_{l,n-l}(z) = x_{l}$ and $d_{i}(z) = y_{i}$ for all $0 < l,i < n$ with $i\neq k$. In particular $\mu_{k,n-k}(z) = x_{k}$. For any other $z'\in U(X_{n}(a,b))$ with $\mu_{k,n-k}(z') = x_{k}$, it follows from the definitions of the $x_{l}$ and $y_{i}$ that also $\mu_{l,n-l}(z') = x_{l}$ and $d_{i}(z') = y_{i}$ for all $0 < l,i < n$ with $i\neq k$. Thus $z' = z$ and hence the map
$$
U(\mu_{k,n-k}): U(X_{n}(a,b))\rightarrow U((X_{k}\otimes X_{n-k})(a,b))
$$
is a bijection. As $U$ is conservative, $\mu_{k,n-k}: X_{n}\rightarrow X_{k}\otimes X_{n-k}$ is an isomorphism of $\mathcal{V}$-enriched quivers.
\end{proof}

\subsection{Simplification of the homotopy category}\label{subsection: Simplification of the homotopy category}

We now turn our attention to the homotopy category $h_{\mathcal{V}}X$ when $X$ is a quasi-category in $\mathcal{V}$. As is the case in the classical situation, this allows for a simpler description of $h_{\mathcal{V}}X$.

\begin{Con}\label{construction: left-pinched mapping space in low dimensions}
Let $(X,S)$ be a templicial object and $a,b\in S$. We define an object $\Hom^{L}_{X}(a,b)_{1}\in \mathcal{V}$ by the following pullback:
\[\begin{tikzcd}
	{\Hom^{L}_{X}(a,b)_{1}} & {X_{2}(a,b)} \\
	{X_{1}(a,b)} & {(X_{1}\otimes_{S} X_{1})(a,b)}
	\arrow["{\pi_{2}}"{pos=0.4}, from=1-1, to=1-2]
	\arrow["{\pi_{1}}"', from=1-1, to=2-1]
	\arrow["{-\otimes s^{X}_{0}}"', from=2-1, to=2-2]
	\arrow["\mu^{X}_{1,1}", from=1-2, to=2-2]
\end{tikzcd}\]
Further, we denote $d_{1} = \pi_{1}$, $d_{0} = d^{X}_{1}\pi_{2}$ and we let $s_{0}: X_{1}(a,b)\rightarrow \Hom^{L}_{X}(a,b)_{1}$ be the unique morphism such that $\pi_{1}s_{0} = \id_{X_{1}(a,b)}$ and $\pi_{2}s_{0} = s^{X}_{1}$. We obtain a reflexive pair:
\[\begin{tikzcd}
	{\Hom^{L}_{X}(a,b)_{1}} & {X_{1}(a,b)}
	\arrow["{d_{0}}", shift left=2, from=1-1, to=1-2]
	\arrow["{d_{1}}"', shift right=2, from=1-1, to=1-2]
	\arrow["{s_{0}}"{description}, from=1-2, to=1-1]
\end{tikzcd}\]
and we define an object $h'_{\mathcal{V}}X(a,b)$ as the coequaliser of this pair:
\begin{equation}\label{diagram: homotopy cat. coequalizer 2}
\begin{tikzcd}
	{\Hom^{L}_{X}(a,b)_{1}} & {X_{1}(a,b)} & {h'_{\mathcal{V}}X(a,b)}
	\arrow["{d_{0}}", shift left=1, from=1-1, to=1-2]
	\arrow["{d_{1}}"', shift right=1, from=1-1, to=1-2]
	\arrow["q", two heads, from=1-2, to=1-3]
\end{tikzcd}
\end{equation}
\end{Con}

\begin{Rem}
It is possible to extend Construction \ref{construction: left-pinched mapping space in low dimensions} to obtain a simplicial object $\Hom^{L}_{X}(a,b): \simp^{op}\rightarrow \mathcal{V}$ which generalises the \emph{left-pinched morphism space} of a simplicial set (as defined in \cite[Tag 01KX]{kerodon}). In particular, we have $\Hom^{L}_{X}(a,b)_{0} = X_{1}(a,b)$. The morphisms $d_{0},d_{1}: \Hom^{L}_{X}(a,b)_{1}\rightrightarrows X_{1}(a,b)$ and $s_{0}: X_{1}(a,b)\rightarrow \Hom^{L}_{X}(a,b)_{1}$ then constitute the lowest dimensional face and degeneracy morphisms of $\Hom^{L}_{X}(a,b)$. We will not go into them here however and leave their investigation to later research.
\end{Rem}

\begin{Rem}\label{remark: hom-sets underlying homotopy cat. of a fibrant temp. obj.}
Note that as $U$ preserves pullbacks, we find that $U(\Hom^{L}_{X}(a,b)_{1})$ is the set of all $2$-simplices $\sigma\in \tilde{U}(X)$ with $d_{0}(\sigma) = s_{0}(b)$ and $d_{1}d_{2}(\sigma) = a$. In other words, it describes homotopies between two edges $a\rightarrow b$ in $\tilde{U}(X)$.

Assuming that $\tilde{U}(X)$ is a quasi-category and that $U$ preserves reflexive coequalisers, it follows that we have an isomorphism:
$$
U(h'_{\mathcal{V}}X(a,b))\simeq h\tilde{U}(X)(a,b)
$$
and the canonical morphism $X_{1}(a,b)\twoheadrightarrow h'_{\mathcal{V}}X(a,b)$ precisely takes the homotopy class $[f]$ in $h\tilde{U}(X)$ of any $f\in U(X_{1}(a,b))$.
\end{Rem}

\begin{Lem}\label{lemma: comp. well defined in homotopy category of fibrant temp. obj.}
Assume that $U: \mathcal{V}\rightarrow \Set$ preserves reflexive coequalisers. Let $X$ be a quasi-category in $\mathcal{V}$ with objects $a$ and $b$. For any $w,w'\in U(X_{2}(a,b))$ we have
$$
(q\otimes q)\mu_{1,1}(w) = (q\otimes q)\mu_{1,1}(w')\quad \Rightarrow\quad q(d^{X}_{1}(w)) = q(d^{X}_{1}(w'))
$$
in $h'_{\mathcal{V}}X(a,b)$.
\end{Lem}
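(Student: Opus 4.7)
The plan is to combine the hypothesis that $U$ preserves reflexive coequalisers with the inner-horn lifting property of $X$ to carry out a templicial version of the classical argument that composition is well-defined in the homotopy category of a quasi-category.

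First I would observe that since $\mathcal{V}$ is monoidal closed, $q\otimes q=(\id\otimes q)\circ(q\otimes\id)$ is a composition of reflexive coequalisers (one per tensor factor). Applying $U$, which preserves reflexive coequalisers by hypothesis and pullbacks as a right adjoint, shows that $U(q\otimes q)$ is a composition of reflexive coequalisers of sets. Consequently the hypothesis unfolds into a finite zig-zag $\mu_{1,1}(w)=\alpha_0,\alpha_1,\ldots,\alpha_n=\mu_{1,1}(w')$ of elements of $U((X_1\otimes_S X_1)(a,b))$ in which each consecutive pair arises from an element of $U((\Hom^L_X\otimes_S X_1)(a,b))$ or $U((X_1\otimes_S \Hom^L_X)(a,b))$ via the face maps $d_0,d_1$ in one tensor factor. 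Using the $\Lambda^2_1$-filler (Proposition \ref{proposition: fibrant temp. obj. charac.}) I lift each $\alpha_i$ to some $w_i\in U(X_2(a,b))$, and by transitivity of the equivalence relation defining $h'_\mathcal{V}X(a,b)$ it suffices to treat a single elementary step. After symmetrising, I may assume $\mu_{1,1}(w)=f\otimes g$ and $\mu_{1,1}(w')=f'\otimes g$ with a left-homotopy witness $\sigma\in U(X_2(a,c))$ satisfying $\mu_{1,1}(\sigma)=f\otimes s_0(c)$ and $d_1(\sigma)=f'$.

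Next I would build a $\Lambda^3_2$-horn via Corollary \ref{corollary: elementwise charac. of necklace morphisms}.\ref{item: necklace morphism from horn}, corresponding to a 3-simplex $W$ over the vertex sequence $(a,c,c,b)$: set the three available faces to $d_0=s_0(g)$, $d_1=w'$, $d_3=\sigma$ and the comultiplication data to $x_1=f\otimes s_0(g)$, $x_2=\sigma\otimes g$. All the compatibility conditions required by the corollary follow from the hypotheses on $\sigma$ and $w'$ together with the naturality identity $\mu_{1,1}(s_0(g))=s_0(c)\otimes g$. Filling this inner horn yields $W$ whose missing $d_2$-face $\tau\in U(X_2(a,b))$ satisfies $\mu_{1,1}(\tau)=(\id\otimes d_1)(x_1)=f\otimes g=\mu_{1,1}(w)$ and $d_1(\tau)=d_1 d_2(W)=d_1 d_1(W)=d_1(w')$ by the simplicial identities.

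To conclude, I would fill a second horn: a $\Lambda^3_1$-horn corresponding to a 3-simplex $V$ over the vertex sequence $(a,c,b,b)$, with faces $d_0=s_1(g)$, $d_2=\tau$, $d_3=w$ and comultiplication data $x_1=f\otimes s_1(g)$, $x_2=w\otimes s_0(b)$. All compatibilities follow from $\mu_{1,1}(w)=\mu_{1,1}(\tau)=f\otimes g$ together with $\mu_{1,1}(s_1(g))=g\otimes s_0(b)$. The missing $d_1$-face $\rho$ then satisfies $\mu_{1,1}(\rho)=(d_1\otimes\id)(w\otimes s_0(b))=d_1(w)\otimes s_0(b)$ and $d_1(\rho)=d_1 d_1(V)=d_1 d_2(V)=d_1(\tau)$, so the pair $(d_1(w),\rho)$ lies in $U(\Hom^L_X(a,b)_1)$ and produces $q(d_1(w))=q(d_1(\tau))=q(d_1(w'))$ via the defining coequaliser \eqref{diagram: homotopy cat. coequalizer 2}. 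The main obstacle will be the first paragraph: carefully extracting an explicit zig-zag of elementary moves in $U((X_1\otimes_S X_1)(a,b))$ from the reflexive-coequaliser presentation of $q\otimes q$, and ensuring each elementary relation lifts to pure-tensor representatives suitable for the horn constructions; the horn fills themselves are routine templicial analogues of standard arguments.
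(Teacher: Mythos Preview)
Your overall strategy (extract a zig-zag from the reflexive coequaliser presentation of $q\otimes q$, lift the intermediate nodes via $\Lambda^{2}_{1}$-filling, and bridge each step with two $3$-horn fills) is exactly the paper's. The genuine gap is the sentence ``after symmetrising, I may assume $\mu_{1,1}(w)=f\otimes g$ and $\mu_{1,1}(w')=f'\otimes g$ with a left-homotopy witness $\sigma\in U(X_{2}(a,c))$''. You are silently assuming that an element of $U((\Hom^{L}_{X}\otimes_{S}X_{1})(a,b))$ is a pure tensor $\sigma\otimes g$ for a single intermediate vertex $c$, and hence that $\mu_{1,1}(w)$ itself is a pure tensor. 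This is false in general: $U$ is only lax monoidal, and Example~\ref{example: free non-cell-cofibrant temp. obj.} exhibits a $w$ with $\mu_{1,1}(w)=f_{1}\otimes g_{1}+f_{2}\otimes g_{2}$ not of the form $f\otimes g$. Both of your horn constructions use $f$, $g$, $c$, $\sigma$ as individual entities (e.g.\ $d_{3}=\sigma\in U(X_{2}(a,c))$, $d_{0}=s_{0}(g)$), so as written they do not make sense without this assumption. You flag this as ``the main obstacle'' but give no indication how to get around it, and in fact it cannot be got around by producing pure-tensor representatives.

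The paper's fix is twofold. First, instead of factoring $q\otimes q$ one variable at a time, it uses that $-\otimes-$ preserves reflexive coequalisers \emph{simultaneously} in both variables, so $q\otimes q$ is itself the coequaliser of the single reflexive pair $d_{0}\otimes d_{0},\,d_{1}\otimes d_{1}: Q\otimes_{S}Q\rightrightarrows X_{1}\otimes_{S}X_{1}$ (where $Q$ is the quiver $(\Hom^{L}_{X}(a,b)_{1})_{a,b}$). The zig-zag then consists of elements $\sigma\in U((Q\otimes_{S}Q)(a,b))$, with no pure-tensor claim. Second, and this is the key move you are missing, the horn data are defined by \emph{applying quiver morphisms} to the undecomposed $\sigma$: e.g.\ $x_{1}=(d_{1}\otimes s^{X}_{0}d_{1})(\sigma)$, $x_{2}=(\pi_{2}\otimes d_{1})(\sigma)$, $y_{2}=w$ for the first $\Lambda^{3}_{1}$-horn. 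These are perfectly well-defined on arbitrary elements of $U((Q\otimes_{S}Q)(a,b))$. Your constructions can be repaired along these lines (e.g.\ your $x_{2}=\sigma\otimes g$ becomes $(\pi_{2}\otimes\id)(\eta)$ for $\eta\in U((\Hom^{L}_{X}\otimes_{S}X_{1})(a,b))$), but you must abandon the extraction of individual tensor components entirely.
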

\begin{proof}
Let $Q$ denote the quiver given by $\Hom^{L}_{X}(a,b)_{1}$ for all objects $a$ and $b$ of $X$.
Let $\sigma\in U((Q\otimes Q)(a,b))$ and $w,w'\in U(X_{2}(a,b))$ be such that $\mu_{1,1}(w) = (d_{0}\otimes d_{0})(\sigma)$ and $\mu_{1,1}(w') = (d_{1}\otimes d_{1})(\sigma)$. Then:
\begin{itemize}
\item Consider the following elements $x_{1} = (d_{1}\otimes s^{X}_{0}d_{1})(\sigma)\in U((X_{1}\otimes X_{2})(a,b))$, $x_{2} = (\pi_{2}\otimes d_{1})(\sigma)\in U((X_{2}\otimes X_{1})(a,b))$ and $y_{2} = w\in U(X_{2}(a,b))$. These define a morphism $\tilde{F}(\Lambda^{3}_{1})_{\bullet}(0,3)\rightarrow X_{\bullet}(a,b)$ which extends to an element $z\in U(X_{3}(a,b))$. Setting $w'' = d^{X}_{1}(z)\in U(X_{2}(a,b))$, we have $d^{X}_{1}(w'') = d^{X}_{1}(w)$.
\item Likewise, consider the elements $x_{1} = (d_{0}\otimes \pi_{2})(\sigma)\in U((X_{1}\otimes X_{2})(a,b))$, $x_{2} = w''\otimes s^{X}_{0}(b)\in U((X_{2}\otimes X_{1})(a,b))$ and $y_{2} = w'$. These define a morphism $\tilde{F}(\Lambda^{3}_{1})_{\bullet}(0,3)\rightarrow X_{\bullet}(a,b)$ which extends to an element $z\in U(X_{3}(a,b))$. Then set $\tau = d^{X}_{1}(z)\in U(X_{2}(a,b))$.
\end{itemize}
It follows that $\mu_{1,1}(\tau) = d^{X}_{1}(w)\otimes s^{X}_{0}(b)$ and $d^{X}_{1}(\tau) = d^{X}_{1}(w')$. Hence, we have $qd^{X}_{1}(w) = qd^{X}_{1}(w')$.

As the diagram \eqref{diagram: homotopy cat. coequalizer 2} is a reflexive coequaliser, it is preserved by $-\otimes -$ in both variables simultaneously so that we again have a reflexive coequaliser
\[\begin{tikzcd}
	{(Q\otimes Q)(a,b)} & {(X_{1}\otimes X_{1})(a,b)} & {(h'_{\mathcal{V}}X\otimes h'_{\mathcal{V}}X)(a,b)}
	\arrow["{d_{0}\otimes d_{0}}", shift left=1, from=1-1, to=1-2]
	\arrow["{d_{1}\otimes d_{1}}"', shift right=1, from=1-1, to=1-2]
	\arrow["{q\otimes q}", two heads, from=1-2, to=1-3]
\end{tikzcd}\]
Now assume that $(q\otimes q)\mu_{1,1}(w) = (q\otimes q)\mu_{1,1}(w')$. As $U$ preserves reflexive coequalisers, there exist $\alpha_{0}, ..., \alpha_{n}\in U((X_{1}\otimes X_{1})(a,b))$ such that $\mu_{1,1}(w) = \alpha_{0}$, $\alpha_{n} = \mu_{1,1}(w')$ and for all $i\in\{1, ..., n\}$ there exists a $\sigma\in U((Q\otimes Q)(a,b))$ such that
\begin{align*}
&\alpha_{i-1} = (d_{0}\otimes d_{0})(\sigma)\text{ and }(d_{1}\otimes d_{1})(\sigma) = \alpha_{i}\\
\text{or}\qquad &\alpha_{i-1} = (d_{1}\otimes d_{1})(\sigma)\text{ and }(d_{0}\otimes d_{0})(\sigma) = \alpha_{i}
\end{align*}
For every $0 < i < n$, $\alpha_{i}$ defines a horn $\tilde{F}(\Lambda^{2}_{1})_{\bullet}(0,2)\rightarrow X_{\bullet}(a,b)$ which we can extend to an element $w_{i}\in U(X_{2}(a,b))$ so that $\mu_{1,1}(w_{i}) = \alpha_{i}$. Thus it follows by the previous that
$$
qd_{1}(w) = qd_{1}(w_{1}) = ... = qd_{1}(w_{n-1}) = qd_{1}(w')
$$
\end{proof}

\begin{Lem}\label{lemma: lift of coequalizer}
Assume that $U: \mathcal{V}\rightarrow \Set$ is faithful. Let $g: X\rightarrow Y$ and $f: X\rightarrow Z$ be morphisms in $\mathcal{V}$ such that $g$ is a regular epimorphism. Suppose that for all $x,y\in U(X)$, we have
$$
g(x) = g(y)\quad \Rightarrow\quad f(x) = f(y)
$$
Then there exists a unique morphism $h: Y\rightarrow Z$ such that $hg = f$.
\end{Lem}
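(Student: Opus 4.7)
The plan is to use the defining property of a regular epimorphism together with the faithfulness hypothesis on $U$. Recall that $g$ is a regular epimorphism means $g$ is the coequaliser of some parallel pair; since $\mathcal{V}$ is bicomplete, $g$ is in fact the coequaliser of its own kernel pair. So I would begin by forming the kernel pair $p_{1},p_{2}\colon K\rightrightarrows X$ of $g$ in $\mathcal{V}$, so that $g$ is the coequaliser of $(p_{1},p_{2})$, and reduce the problem to showing $fp_{1}=fp_{2}$, after which the desired $h$ and its uniqueness follow from the universal property.

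To establish $fp_{1}=fp_{2}$, I would invoke the faithfulness of $U$: it suffices to show $U(fp_{1})=U(fp_{2})$ as maps of sets, i.e.\ to verify the equality pointwise on elements of $U(K)$. Here the crucial observation is that $U=\mathcal{V}(I,-)$, being right adjoint to $F$, preserves all limits — in particular pullbacks — so $(U(p_{1}),U(p_{2}))\colon U(K)\rightrightarrows U(X)$ is the kernel pair of $U(g)$ in $\Set$. Concretely, for every element $\kappa\in U(K)$ one has $U(g)(U(p_{1})(\kappa))=U(g)(U(p_{2})(\kappa))$, and then the hypothesis of the lemma immediately gives $U(f)(U(p_{1})(\kappa))=U(f)(U(p_{2})(\kappa))$. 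Thus $U(fp_{1})=U(fp_{2})$, and by faithfulness $fp_{1}=fp_{2}$.

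Since $g$ is the coequaliser of $(p_{1},p_{2})$, the equality $fp_{1}=fp_{2}$ yields a unique morphism $h\colon Y\to Z$ with $hg=f$, completing the proof. There is no real obstacle here; the only thing worth highlighting is the interplay between regular epimorphisms and kernel pairs (which requires the existence of pullbacks in $\mathcal{V}$, available since $\mathcal{V}$ is a B\'enabou cosmos) and the fact that $U$ being a right adjoint lets us transport the elementwise hypothesis into the honest equality $fp_{1}=fp_{2}$.
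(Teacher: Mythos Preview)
Your proof is correct and follows essentially the same approach as the paper: form the kernel pair of $g$, use that a regular epimorphism in a category with pullbacks is the coequaliser of its kernel pair, and then use faithfulness of $U$ together with the fact that $U$ preserves pullbacks to reduce $fp_{1}=fp_{2}$ to the elementwise hypothesis. The paper's version is terser but structurally identical.
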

\begin{proof}
Denote the kernel pair $X\times_{Y}X\rightrightarrows X$ of $g$ by $\pi_{1}$ and $\pi_{2}$. Since $g$ is the coequaliser of this pair, it suffices to show that $f\pi_{1} = f\pi_{2}$. As $U$ is faithful, this is equivalent to showing that for all $(x,y)\in U(X)\times_{U(Y)}U(X)$, we have $f(x) = f(y)$. But this is equivalent to the hypothesis on $f$ and $g$.
\end{proof}

\begin{Con}\label{construction: homotopy cat. of a fibrant temp. obj.}
Assume that $U: \mathcal{V}\rightarrow \Set$ is faithful and preserves and reflects reflexive coequalisers. Let $(X,S)$ be a quasi-category in $\mathcal{V}$. We construct a $\mathcal{V}$-enriched category $h'_{\mathcal{V}}X$ whose hom-objects are given by $h'_{\mathcal{V}}X(a,b)$ of Construction \ref{construction: left-pinched mapping space in low dimensions}. Let $h'_{\mathcal{V}}X$ denote the quiver given by $h'_{\mathcal{V}}X(a,b)$ for all $a,b\in S$, and let $q: X_{1}\rightarrow h'_{\mathcal{V}}X$ denote the canonical quiver morphism.

First define $u: I_{S}\xrightarrow{s_{0}} X_{1}\xrightarrow{q} h'_{\mathcal{V}}X$. Note that $U$ also reflects regular epimorphisms (as they are the coequaliser of their kernel pair). Thus as $X$ is a quasi-category in $\mathcal{V}$, the comultiplication $\mu_{1,1}: X_{2}\rightarrow X_{1}\otimes_{S} X_{1}$ is a regular epimorphism. Further, $q$ is a regular epimorphism by definition. Now $-\otimes -$ preserves reflexive coequalisers in each variable and thus also regular epimorphisms. It follows that $q^{\otimes 2}\circ \mu_{1,1}$ is a regular epimorphism as well. Using Lemmas \ref{lemma: comp. well defined in homotopy category of fibrant temp. obj.} and \ref{lemma: lift of coequalizer}, we have a unique quiver morphism $m: h'_{\mathcal{V}}X\otimes_{S} h'_{\mathcal{V}}X\rightarrow h'_{\mathcal{V}}X$ such that the following diagram commutes:
\[\begin{tikzcd}
	{X_{2}} & {X^{\otimes 2}_{1}} & {(h'_{\mathcal{V}}X)^{\otimes 2}} \\
	& {X_{1}} & {h'_{\mathcal{V}}X}
	\arrow["{\mu_{1,1}}", from=1-1, to=1-2]
	\arrow["{q^{\otimes 2}}", from=1-2, to=1-3]
	\arrow["{m}", from=1-3, to=2-3]
	\arrow["{d_{1}}"', from=1-1, to=2-2]
	\arrow["q"', from=2-2, to=2-3]
\end{tikzcd}\]
Given a $2$-simplex $(\alpha_{i,j})_{1\leq i < j\leq 2}$ (see Remark \ref{remark: simplices of underlying simp. set}) of $\tilde{U}(X)$ with vertices $a$, $b$ and $c$, we have $\mu_{1,1}(\alpha_{02}) = \alpha_{01}\otimes \alpha_{12}$ and thus $m(q(\alpha_{01})\otimes q(\alpha_{02})) = q(d_{1}(\alpha_{02}))$. Therefore, the induced map
$$
U(h'_{\mathcal{V}}X(a,b))\times U(h'_{\mathcal{V}}X(b,c))\rightarrow U(h'_{\mathcal{V}}X(a,b)\otimes h'_{\mathcal{V}}X(b,c)) \xrightarrow{U(m_{a,b,c})} U(h'_{\mathcal{V}}X(a,c))
$$
coincides with the composition law of $h\tilde{U}(X)$ under the isomorphisms supplied by Remark \ref{remark: hom-sets underlying homotopy cat. of a fibrant temp. obj.}. The element $u_{a} = q(s_{0}(a)): I\rightarrow h'_{\mathcal{V}}X(a,a)$ is clearly the identity at $a$ in $h\tilde{U}(X)$. It then follows from the faithfulness of $U$ that $m$ is associative and unital with respect to $u$. So we obtain a $\mathcal{V}$-category $h'_{\mathcal{V}}X$.

Note that by construction we have an isomorphism of categories
$$
\mathcal{U}(h'_{\mathcal{V}}X)\simeq h\tilde{U}(X)
$$
\end{Con}

\begin{Prop}\label{proposition: two homotopy cats. of fibrant temp. obj. coincide}
Assume that $U: \mathcal{V}\rightarrow \Set$ is faithful and preserves and reflects reflexive coequalisers. The assignment $X\mapsto h'_{\mathcal{V}}X$ of Construction \ref{construction: homotopy cat. of a fibrant temp. obj.} extends to a functor $h'_{\mathcal{V}}$ from the full subcategory of $\ts\mathcal{V}$ spanned by all quasi-categories in $\mathcal{V}$ to $\mathcal{V}\Cat$, which is left-adjoint to the templicial nerve functor $N_{\mathcal{V}}$.

In particular, there exists a canonical isomorphism of $\mathcal{V}$-enriched categories:
$$
h_{\mathcal{V}}X\simeq h'_{\mathcal{V}}X
$$
for every quasi-category $X$ in $\mathcal{V}$.
\end{Prop}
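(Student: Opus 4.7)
The plan is to first upgrade $X \mapsto h'_{\mathcal{V}}X$ to a functor, then verify that it is left-adjoint to $N_{\mathcal{V}}$ on the full subcategory of quasi-categories in $\mathcal{V}$, and finally deduce the natural isomorphism $h_{\mathcal{V}}X \simeq h'_{\mathcal{V}}X$ by uniqueness of left-adjoints (invoking Definition \ref{definition: homotopy cat. functor}).

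For functoriality, given a templicial morphism $(\alpha,f): (X,S) \to (Y,T)$ between quasi-categories, I will observe that the component $f_!\alpha_1$ is compatible with the pullback defining $\Hom^{L}_{X}(a,b)_{1}$ (via naturality of $\alpha$ with respect to $d_{1}, s_{0}$ and monoidality with respect to $\mu_{1,1}$), hence descends through the reflexive coequaliser \eqref{diagram: homotopy cat. coequalizer 2} to a quiver morphism $f_!h'_{\mathcal{V}}X \to h'_{\mathcal{V}}Y$. That this defines a $\mathcal{V}$-functor is then immediate from the universal characterisations of $m$ and $u$ in Construction \ref{construction: homotopy cat. of a fibrant temp. obj.}.

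For the adjunction I will fix a small $\mathcal{V}$-category $\mathcal{C}$ and a quasi-category $(X,S)$ and appeal to Lemma \ref{lemma: nerve unique extension}: a templicial morphism $X \to N_{\mathcal{V}}(\mathcal{C})$ with object map $f$ is equivalent to a quiver morphism $H: X_1 \to f^*(\mathcal{C})$ satisfying \eqref{diagram: nerve unique extension}. The key step is to show that $H$ factors uniquely through $q: X_1 \to h'_{\mathcal{V}}X$. Using Lemma \ref{lemma: lift of coequalizer} (together with the hypothesis that $U$ preserves the coequaliser \eqref{diagram: homotopy cat. coequalizer 2}, so that $U(q)$ is the coequaliser of $U(d_0)$ and $U(d_1)$), it suffices to check that $H(d^{X}_{1}(\sigma)) = H(z)$ whenever $(\sigma,z) \in U(\Hom^{L}_{X}(a,b)_{1})$, i.e.\ whenever $\mu_{1,1}(\sigma) = z \otimes s_{0}(b)$. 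Using the first square of \eqref{diagram: nerve unique extension}, then the second, then right-unitality of $m_{\mathcal{C}}$:
\[
H(d^{X}_{1}(\sigma)) \;=\; m_{\mathcal{C}}(H \otimes H)(\mu_{1,1}(\sigma)) \;=\; m_{\mathcal{C}}(H(z) \otimes u_{\mathcal{C}}(f(b))) \;=\; H(z).
\]
The induced $\bar H: h'_{\mathcal{V}}X \to f^*(\mathcal{C})$ then inherits compatibility with composition and unit from the defining equations of $m$ and $u$ on $h'_{\mathcal{V}}X$ together with \eqref{diagram: nerve unique extension}, and the converse direction $(G,f) \mapsto (Gq,f)$ is immediate. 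Mutual inversion follows from the uniqueness clause of Lemma \ref{lemma: lift of coequalizer}, and naturality in both variables is routine.

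The hard part will be the verification that $\bar H$ preserves composition; concretely, one must check commutativity of a square obtained by pre-composing the putative equality $\bar H\, m \stackrel{?}{=} f^*(m_{\mathcal{C}})(\bar H \otimes \bar H)$ with the regular epimorphism $q^{\otimes 2}\circ \mu_{1,1}$ (regular epi because $X$ is a quasi-category by Proposition \ref{proposition: nerve equiv.}-style reasoning, because $q$ is a regular epi, and because $-\otimes-$ preserves reflexive coequalisers in each variable, as already exploited in Construction \ref{construction: homotopy cat. of a fibrant temp. obj.}). Once this is established, the precomposed equation reduces to the first square of \eqref{diagram: nerve unique extension} applied to $H = \bar H q$. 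The final isomorphism $h_{\mathcal{V}}X \simeq h'_{\mathcal{V}}X$ is then a formal consequence of the uniqueness of left-adjoints, and naturality in $X$ is automatic from the bijection of hom-sets.
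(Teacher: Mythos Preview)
Your proposal is correct and follows essentially the same route as the paper: reduce via Lemma~\ref{lemma: nerve unique extension} to a quiver morphism $H: X_1 \to f^*(\mathcal{C})$, check that $H$ coequalises $d_0, d_1: \Hom^L_X(a,b)_1 \rightrightarrows X_1(a,b)$ using the two squares of \eqref{diagram: nerve unique extension} and unitality of $m_{\mathcal{C}}$, and conclude by uniqueness of left adjoints. Two minor remarks: (i) your appeal to Lemma~\ref{lemma: lift of coequalizer} is unnecessary here, since $q$ is \emph{defined} as the coequaliser of $(d_0,d_1)$ and you have verified $Hd_0 = Hd_1$ (pointwise, hence on the nose by faithfulness of $U$), so the factorisation is immediate from the universal property; (ii) the paper establishes the adjunction via the unit $\eta_X$ and its universal property, which yields functoriality of $h'_{\mathcal{V}}$ for free, whereas you construct functoriality explicitly first --- both are fine, but the unit-based approach is shorter.
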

\begin{proof}
It follows from Construction \ref{construction: homotopy cat. of a fibrant temp. obj.} and Lemma \ref{lemma: nerve unique extension} that we have a unique templicial morphism $\eta_{X}: X\rightarrow N_{\mathcal{V}}(h'_{\mathcal{V}}X)$ such that $\eta_{X_{1}}: X_{1}\rightarrow h'_{\mathcal{V}}X$ is precisely $q$. We claim that $\eta_{X}$ is the unit of an adjunction $h'_{\mathcal{V}}\dashv N_{\mathcal{V}}$.

Now let $\mathcal{C}$ be an arbitrary small $\mathcal{V}$-category and $(\zeta,f): X\rightarrow N_{\mathcal{V}}(\mathcal{C})$ a templicial morphism. Then by Lemma \ref{lemma: nerve unique extension}, $\zeta: f_{!}X\rightarrow N_{\mathcal{V}}(\mathcal{C})$ corresponds to a quiver morphism $H: X_{1}\rightarrow f^{*}(\mathcal{C})$ such that the diagrams \eqref{diagram: nerve unique extension} commute. Letting $Q$ denote the quiver given by $\Hom^{L}_{X}(a,b)_{1}$ for all objects $a$ and $b$ of $X$, we have a commutative diagram
\[\begin{tikzcd}
	Q & {X_{2}} & {X_{1}} & {f^{*}(\mathcal{C})} \\
	{X_{1}} & {X^{\otimes 2}_{1}} & {f^{*}(\mathcal{C})^{\otimes 2}} & {f^{*}(\mathcal{C}^{\otimes 2})}
	\arrow["{f^{*}(m_{\mathcal{C}})}"', from=2-4, to=1-4]
	\arrow["H", from=1-3, to=1-4]
	\arrow["{d^{X}_{1}}"', from=1-2, to=1-3]
	\arrow["{\pi_{2}}"', from=1-1, to=1-2]
	\arrow["{d_{1} = \pi_{1}}"', from=1-1, to=2-1]
	\arrow["{-\otimes s^{X}_{0}}", from=2-1, to=2-2]
	\arrow["{\mu^{X}_{1,1}}", from=1-2, to=2-2]
	\arrow[from=2-3, to=2-4]
	\arrow["{H^{\otimes 2}}", from=2-2, to=2-3]
	\arrow["{H\otimes u}"', bend right = 10pt, from=2-1, to=2-3]
	\arrow["{d_{0}}", bend left = 10pt, from=1-1, to=1-3]
\end{tikzcd}\]
It follows that $Hd_{0} = Hd_{1}: Q\rightarrow f^{*}(\mathcal{C})$ and thus there exists a unique quiver morphism $H': h'_{\mathcal{V}}X\rightarrow f^{*}(\mathcal{C})$ such that $H'q = H$. By construction, $H'$ defines a $\mathcal{V}$-functor $h'_{\mathcal{V}}X\rightarrow \mathcal{C}$ which is clearly unique such that $N_{\mathcal{V}}(H)\circ \eta_{X} = (\zeta,f)$.
\end{proof}

\begin{Cor}\label{corollary: forget commute with homotopy for fibrant temp. obj.}
Assume that $U: \mathcal{V}\rightarrow \Set$ is faithful and preserves and reflects reflexive coequalisers. Let $X$ be a quasi-category in $\mathcal{V}$. The canonical functor
$$
h\tilde{U}(X)\rightarrow \mathcal{U}(h_{\mathcal{V}}X)
$$
is an isomorphism of categories.
\end{Cor}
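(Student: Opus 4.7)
The plan is to combine two essentially already-established facts: for a quasi-category $X$ in $\mathcal{V}$, the hands-on construction $h'_{\mathcal{V}}X$ of Construction \ref{construction: homotopy cat. of a fibrant temp. obj.} realises the homotopy category (by Proposition \ref{proposition: two homotopy cats. of fibrant temp. obj. coincide}), and by its very construction the underlying ordinary category of $h'_{\mathcal{V}}X$ is $h\tilde{U}(X)$. So the strategy is to factor the canonical comparison $h\tilde{U}(X) \to \mathcal{U}(h_{\mathcal{V}}X)$ through $\mathcal{U}(h'_{\mathcal{V}}X)$ and argue that both legs are isomorphisms.

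First I would invoke Proposition \ref{proposition: two homotopy cats. of fibrant temp. obj. coincide} to obtain a canonical isomorphism $h_{\mathcal{V}}X \xrightarrow{\sim} h'_{\mathcal{V}}X$ of $\mathcal{V}$-enriched categories, adjoint to the unit $\eta_X\colon X \to N_{\mathcal{V}}(h'_{\mathcal{V}}X)$ built from the quotient quiver morphism $q\colon X_1 \twoheadrightarrow h'_{\mathcal{V}}X$. Applying $\mathcal{U}$ gives an isomorphism $\mathcal{U}(h_{\mathcal{V}}X) \simeq \mathcal{U}(h'_{\mathcal{V}}X)$. Next I would appeal to Construction \ref{construction: homotopy cat. of a fibrant temp. obj.} (and Remark \ref{remark: hom-sets underlying homotopy cat. of a fibrant temp. obj.}, which requires precisely the hypotheses placed on $U$, plus Corollary \ref{corollary: fibrant temp. obj. has underlying quasi-cat.} to ensure $\tilde{U}(X)$ is itself a quasi-category) to obtain the isomorphism $\mathcal{U}(h'_{\mathcal{V}}X) \simeq h\tilde{U}(X)$: this is the statement that on underlying hom-sets the reflexive coequaliser \eqref{diagram: homotopy cat. coequalizer 2} realises the relation ``there is a homotopy'' between edges of $\tilde{U}(X)$, and the composition and identities constructed via Lemmas \ref{lemma: comp. well defined in homotopy category of fibrant temp. obj.} and \ref{lemma: lift of coequalizer} reduce to those of the classical homotopy category.

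The last step would be to check that the resulting composite isomorphism $h\tilde{U}(X) \xrightarrow{\sim} \mathcal{U}(h'_{\mathcal{V}}X) \xrightarrow{\sim} \mathcal{U}(h_{\mathcal{V}}X)$ agrees with the canonical comparison functor. That canonical functor arises by applying $\tilde{U}$ to the unit $X \to N_{\mathcal{V}}(h_{\mathcal{V}}X)$, using Proposition \ref{proposition: nerve commutativity results} to rewrite the target as $N\mathcal{U}(h_{\mathcal{V}}X)$, and then transposing under $h \dashv N$; on edges it sends $f \in U(X_1(a,b))$ to the class of its image under the unit. Under the isomorphism built above, this image is exactly $q(f)$, so both functors are induced by the same map $U(X_1(a,b)) \to U(h'_{\mathcal{V}}X(a,b))$ after passing to equivalence classes. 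The main (mild) obstacle I anticipate is this final naturality check: it requires carefully comparing the coequaliser presentation of $h_{\mathcal{V}}X$ coming from \eqref{diagram: homotopy cat. coequalizer 1} with the left-pinched coequaliser \eqref{diagram: homotopy cat. coequalizer 2}, but once the units of the two adjunctions are tracked through Proposition \ref{proposition: two homotopy cats. of fibrant temp. obj. coincide} this becomes routine.
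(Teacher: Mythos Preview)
Your proposal is correct and follows essentially the same route as the paper: invoke Proposition \ref{proposition: two homotopy cats. of fibrant temp. obj. coincide} for $h_{\mathcal{V}}X \simeq h'_{\mathcal{V}}X$ and Construction \ref{construction: homotopy cat. of a fibrant temp. obj.} for $\mathcal{U}(h'_{\mathcal{V}}X) \simeq h\tilde{U}(X)$. The paper's proof is a two-line appeal to exactly these facts; your additional third step, verifying that the resulting composite coincides with the canonical comparison functor, is a legitimate point the paper leaves implicit, and your sketch of how to check it via the units is sound.
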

\begin{proof}
This is now an immediate consequence of Proposition \ref{proposition: two homotopy cats. of fibrant temp. obj. coincide} and the fact that $\mathcal{U}(h'_{\mathcal{V}}X)\simeq h\tilde{U}(X)$.
\end{proof}

\printbibliography

@phdthesis{aguiar1997internal,
    AUTHOR = {Aguiar, Marcelo},
     TITLE = {Internal categories and quantum groups},
 PUBLISHER = {ProQuest LLC, Ann Arbor, MI},
    SCHOOL = {Cornell University},
      YEAR = {1997},
     PAGES = {295},
  MRNUMBER = {2696373}
}

@book {aguiar2010monoidal,
    AUTHOR = {Aguiar, Marcelo and Mahajan, Swapneel},
     TITLE = {Monoidal functors, species and {H}opf algebras},
    SERIES = {CRM Monograph Series},
    VOLUME = {29},
 PUBLISHER = {American Mathematical Society, Providence, RI},
      YEAR = {2010},
     PAGES = {lii+784},
   MRCLASS = {18D10 (05A30 16T30)},
  MRNUMBER = {2724388},
MRREVIEWER = {Volodymyr V. Lyubashenko},
       %DOI = {10.1090/crmm/029}
}

@Online{bacard2010Segal,
  hyphenation = {american},
  author      = {Bacard, Hugo V.},
  title       = {Segal enriched categories {I}},
  version     = {1},
  date        = {2010-09-20},
  eprinttype  = {arxiv},
  eprint      = {1009.3673v1},
  primaryClass= {math.CT}
}

@article {baues1980geometry,
    AUTHOR = {Baues, H. J.},
     TITLE = {Geometry of loop spaces and the cobar construction},
   JOURNAL = {Mem. Amer. Math. Soc.},
  FJOURNAL = {Memoirs of the American Mathematical Society},
    VOLUME = {25},
      YEAR = {1980},
    NUMBER = {230},
     PAGES = {ix+171},
      ISSN = {0065-9266,1947-6221},
   MRCLASS = {55P35 (55U15 57T30)},
  MRNUMBER = {567799},
MRREVIEWER = {R.\ M.\ Vogt},
       %DOI = {10.1090/memo/0230},
       %URL = {https://doi.org/10.1090/memo/0230},
}

@article{berger2013homotopy,
  author      = {Berger, Clemens and Moerdijk, Ieke},
  title       = {On the homotopy theory of enriched categories},
  journaltitle= {Q. J. Math.},
  fjournaltitle= {The Quarterly Journal of Mathematics},
  volume      = {64},
  number	  = {3},
  date        = {2013},
  pages       = {805--846},
  %doi		  = {10.1093/qmath/hat023}
}

@article{bergner2007model,
  author      = {Bergner, Julia E.},
  title       = {A model category structure on the category of simplicial categories},
  journaltitle= {Trans. Amer. Math. Soc.},
  fjournaltitle= {Transactions of the American Mathematical Society},
  volume      = {359},
  number	  = {5},
  date        = {2007},
  pages       = {2043--2058},
  %doi		  = {10.1090/S0002-9947-06-03987-0}
}

@article{bergner2007three,
    AUTHOR = {Bergner, Julia E.},
     TITLE = {Three models for the homotopy theory of homotopy theories},
   JOURNAL = {Topology},
  FJOURNAL = {Topology. An International Journal of Mathematics},
    VOLUME = {46},
      YEAR = {2007},
    NUMBER = {4},
     PAGES = {397--436},
      ISSN = {0040-9383},
   MRCLASS = {55U35 (18E35 18G30)},
  MRNUMBER = {2321038},
MRREVIEWER = {Henning Krause},
       %DOI = {10.1016/j.top.2007.03.002},
       %URL = {https://doi.org/10.1016/j.top.2007.03.002},
}

@phdthesis{bermejo2022dgSegal,
    AUTHOR = {Dimitriadis Bermejo, Elena},
     TITLE = {A new model for dg-categories},
 PUBLISHER = {},
    SCHOOL = {Universit\'e de Toulouse},
      YEAR = {2022},
     PAGES = {101},
  MRNUMBER = {}
}

@book {boardmanvogt,
    AUTHOR = {Boardman, John. M. and Vogt, Rainer M.},
     TITLE = {Homotopy invariant algebraic structures on topological spaces},
    SERIES = {Lecture Notes in Mathematics, Vol. 347},
 PUBLISHER = {Springer-Verlag, Berlin-New York},
      YEAR = {1973},
     PAGES = {x+257},
   MRCLASS = {55D35},
  MRNUMBER = {0420609},
MRREVIEWER = {J. Stasheff},
	   %DOI = {10.1007/BFb0068547}
}

@article {bousfield1976constructions,
    AUTHOR = {Bousfield, Aldridge K.},
     TITLE = {Constructions of factorization systems in categories},
   JOURNAL = {J. Pure Appl. Algebra},
  FJOURNAL = {Journal of Pure and Applied Algebra},
    VOLUME = {9},
      YEAR = {1976/77},
    NUMBER = {2},
     PAGES = {207--220},
      ISSN = {0022-4049},
   MRCLASS = {55J10},
  MRNUMBER = {478159},
MRREVIEWER = {Harold Hastings},
       %DOI = {10.1016/0022-4049(77)90067-6},
       %URL = {https://doi.org/10.1016/0022-4049(77)90067-6},
}

@article{cordier1982sur,
     author = {Cordier, Jean-Marc},
     title = {Sur la notion de diagramme homotopiquement coh\'erent},
     journal = {Cahiers de Topologie et G\'eom. Diff\'erentielle Cat\'egoriques},
     fjournal = {Cahiers de Topologie et G\'eom\'etrie Diff\'erentielle Cat\'egoriques},
     pages = {93--112},
     publisher = {Dunod \'editeur, publi\'e avec le concours du CNRS},
     volume = {23},
     number = {1},
     year = {1982},
     zbl = {0493.55009},
     mrnumber = {648798},
     language = {fr},
     %url = {http://www.numdam.org/item/CTGDC_1982__23_1_93_0/}
}

@article {cordier1986Vogt,
    AUTHOR = {Cordier, Jean-Marc and Porter, Timothy},
     TITLE = {Vogt's theorem on categories of homotopy coherent diagrams},
   JOURNAL = {Math. Proc. Cambridge Philos. Soc.},
  FJOURNAL = {Mathematical Proceedings of the Cambridge Philosophical
              Society},
    VOLUME = {100},
      YEAR = {1986},
    NUMBER = {1},
     PAGES = {65--90},
   MRCLASS = {55P99 (18D20)},
  MRNUMBER = {838654},
MRREVIEWER = {R. M. Vogt},
       %DOI = {10.1017/S0305004100065877},
}

@incollection {day1970closed,
    AUTHOR = {Day, Brian},
     TITLE = {On closed categories of functors},
 BOOKTITLE = {Reports of the {M}idwest {C}ategory {S}eminar, {IV}},
    SERIES = {Lecture Notes in Mathematics, Vol. 137},
     PAGES = {1--38},
 PUBLISHER = {Springer, Berlin},
      YEAR = {1970},
   MRCLASS = {18.10},
  MRNUMBER = {0272852},
MRREVIEWER = {J. F. Kennison},
	   %doi = {10.1007/BFb0060437}
}

@article {dugger2011rigidification,
    AUTHOR = {Dugger, Daniel and Spivak, David I.},
     TITLE = {Rigidification of quasi-categories},
   JOURNAL = {Algebr. Geom. Topol.},
  FJOURNAL = {Algebraic \& Geometric Topology},
    VOLUME = {11},
      YEAR = {2011},
    NUMBER = {1},
     PAGES = {225--261},
   MRCLASS = {55U40 (18G30)},
  MRNUMBER = {2764042},
MRREVIEWER = {G\'{e}rald Gaudens},
       %DOI = {10.2140/agt.2011.11.225},
}

@article {eilenberg1950semi-simplicial,
    AUTHOR = {Eilenberg, Samuel and Zilber, Joseph A.},
     TITLE = {Semi-simplicial complexes and singular homology},
   JOURNAL = {Ann. of Math. (2)},
  FJOURNAL = {Annals of Mathematics. Second Series},
    VOLUME = {51},
      YEAR = {1950},
     PAGES = {499--513},
   MRCLASS = {56.0X},
  MRNUMBER = {35434},
MRREVIEWER = {H. Cartan},
       %DOI = {10.2307/1969364},
}

@article {gepner2015enriched,
    AUTHOR = {Gepner, David and Haugseng, Rune},
     TITLE = {Enriched {$\infty$}-categories via non-sym\-metric
              {$\infty$}-operads},
   JOURNAL = {Adv. Math.},
  FJOURNAL = {Advances in Mathematics},
    VOLUME = {279},
      YEAR = {2015},
     PAGES = {575--716},
   MRCLASS = {18D20 (18D10 18D50)},
  MRNUMBER = {3345192},
MRREVIEWER = {Christopher L. Rogers},
       %DOI = {10.1016/j.aim.2015.02.007},
}

@article {gindi2021coherent,
    AUTHOR = {Gindi, Harry},
     TITLE = {Coherent nerves for higher quasicategories},
   JOURNAL = {Theory Appl. Categ.},
  FJOURNAL = {Theory and Applications of Categories},
    VOLUME = {37},
      YEAR = {2021},
     PAGES = {Paper No. 23, 709--817},
   MRCLASS = {18D20 (18A30 18E35 18N40 55U35)},
  MRNUMBER = {4276763},
}

@article {haugseng2015rectification,
    AUTHOR = {Haugseng, Rune},
     TITLE = {Rectification of enriched {$\infty$}-categories},
   JOURNAL = {Algebr. Geom. Topol.},
  FJOURNAL = {Algebraic \& Geometric Topology},
    VOLUME = {15},
      YEAR = {2015},
    NUMBER = {4},
     PAGES = {1931--1982},
   MRCLASS = {18G55 (18D10 18D20 55P48 55U35)},
  MRNUMBER = {3402334},
MRREVIEWER = {Philippe Gaucher},
       %DOI = {10.2140/agt.2015.15.1931},
}

@Online{hirschowitz1998descente,
  hyphenation = {american},
  author      = {Hirschowitz, André and Simpson, Carlos},
  title       = {Descente pour les n-champs (Descent for n-stacks)},
  version     = {3},
  date        = {2001-03-13},
  eprinttype  = {arxiv},
  eprint      = {math/9807049v3},
  primaryClass= {math.AG}
}

@unpublished{joyal1997disks,
  author = {Joyal, Andr\'{e}},
  title = {Disks, duality, and Theta-categories},
  year = {1997},
  url = {https://ncatlab.org/nlab/files/JoyalThetaCategories.pdf},
  note = {unpublished}
}

@unpublished{joyal2004notes,
  author = {Joyal, Andr\'{e}},
  title = {Notes on quasi-categories},
  year = {2004},
  url = {https://www.math.uchicago.edu/~may/IMA/Joyal.pdf},
  note = {unpublished}
}

@unpublished{joyal2007quasisimplicial,
  author = {Joyal, Andr\'{e}},
  title = {Quasi-categories vs simplicial categories},
  year = {2007},
  url = {http://www.math.uchicago.edu/~may/IMA/Incoming/Joyal/QvsDJan9(2007).pdf},
  note = {unpublished}
}

@incollection {joyal2007quasisegal,
    AUTHOR = {Joyal, Andr\'{e} and Tierney, Myles},
     TITLE = {Quasi-categories vs {S}egal spaces},
 BOOKTITLE = {Categories in algebra, geometry and mathematical physics},
    SERIES = {Contemp. Math.},
    VOLUME = {431},
     PAGES = {277--326},
 PUBLISHER = {Amer. Math. Soc., Providence, RI},
      YEAR = {2007},
   MRCLASS = {55U35 (18G55)},
  MRNUMBER = {2342834},
MRREVIEWER = {Nicola Gambino},
       %DOI = {10.1090/conm/431/08278},
}

@article {joyal2002quasi,
    AUTHOR = {Joyal, Andr\'{e}},
     TITLE = {Quasi-categories and {K}an complexes},
      NOTE = {Special volume celebrating the 70th birthday of Professor Max
              Kelly},
   JOURNAL = {J. Pure Appl. Algebra},
  FJOURNAL = {Journal of Pure and Applied Algebra},
    VOLUME = {175},
      YEAR = {2002},
    NUMBER = {1-3},
     PAGES = {207--222},
   MRCLASS = {55U10 (18G55)},
  MRNUMBER = {1935979},
MRREVIEWER = {Donald M. Davis},
       %DOI = {10.1016/S0022-4049(02)00135-4},
}

@online{kerodon,
  author = {Lurie, Jacob},
  title = {Kerodon},
  year = {2018},
  url = {https://kerodon.net},
  urldate = {}
}

@Online{leinster2000homotopy,
  hyphenation = {american},
  author      = {Leinster, Tom},
  title       = {Homotopy algebras for operads},
  version     = {1},
  date        = {2000-02-22},
  eprinttype  = {arxiv},
  eprint      = {math/0002180v1},
  primaryClass= {math.QA}
}

@Online{lowen2020linear,
  hyphenation = {american},
  author      = {Wendy Lowen and Arne Mertens},
  title       = {Linear quasi-categories as templicial modules},
  version     = {2},
  date        = {2020-08-18},
  eprinttype  = {arxiv},
  eprint      = {2005.04778v2},
  primaryClass= {math.CT}
}

@online{lowen2023frobenius,
  hyphenation = {american},
  author      = {Wendy Lowen and Arne Mertens},
  title       = {Frobenius templicial modules and the dg-nerve},
  version     = {3},
  date        = {2023-05-18},
  eprinttype  = {arxiv},
  eprint      = {arXiv:2005.04778v3},
  primaryClass= {math.CT}
}

@book {lurie2009higher,
    AUTHOR = {Lurie, Jacob},
     TITLE = {Higher topos theory},
    SERIES = {Annals of Mathematics Studies},
    VOLUME = {170},
 PUBLISHER = {Princeton University Press, Princeton, NJ},
      YEAR = {2009},
     PAGES = {xviii+925},
   MRCLASS = {18-02 (18B25 18E35 18G30 18G55 55U40)},
  MRNUMBER = {2522659},
MRREVIEWER = {Mark Hovey},
       %DOI = {10.1515/9781400830558},
}

@unpublished{lurie2016higher,
  author = {Lurie, Jacob},
  title = {Higher algebra},
  year = {2016},
  %url = {https://people.math.harvard.edu/~lurie/papers/HA.pdf},
  note = {unpublished}
}

@book {maclane1971categories,
    AUTHOR = {MacLane, Saunders},
     TITLE = {Categories for the working mathematician},
    SERIES = {Graduate Texts in Mathematics, Vol. 5},
 PUBLISHER = {Springer-Verlag, New York-Berlin},
      YEAR = {1971},
     PAGES = {ix+262},
   MRCLASS = {18-02},
  MRNUMBER = {0354798},
MRREVIEWER = {H.-B. Brinkmann},
	   %DOI = {10.1007/978-1-4757-4721-8}
}

@phdthesis{mertens2022templicial,
    AUTHOR = {Mertens, Arne},
     TITLE = {Templicial objects: simplicial objects in a monoidal category},
 PUBLISHER = {},
    SCHOOL = {University of Antwerp},
      YEAR = {2022},
     PAGES = {172},
  MRNUMBER = {}
}

@unpublished{mertens2023nerves,
  label = {Mer23},
  author = {Mertens, A.},
  note = {In preparation},
  year = {},
  title = {On nerves of enriched categories},
}

@Online{moser2022homotopy,
  hyphenation = {american},
  author      = {Moser, Lyne and Rasekh, Nima and Rovelli, Martina},
  title       = {Homotopy coherent nerves of enriched categories},
  version     = {1},
  date        = {2022-08-04},
  eprinttype  = {arxiv},
  eprint      = {2208.02745v1},
  primaryClass= {math.AT}
}

@article {muro2015Dwyer,
    AUTHOR = {Muro, Fernando},
     TITLE = {Dwyer-{K}an homotopy theory of enriched categories},
   JOURNAL = {J. Topol.},
  FJOURNAL = {Journal of Topology},
    VOLUME = {8},
      YEAR = {2015},
    NUMBER = {2},
     PAGES = {377--413},
   MRCLASS = {18D20 (18G55 55U35)},
  MRNUMBER = {3356766},
MRREVIEWER = {Philippe Gaucher},
       %DOI = {10.1112/jtopol/jtu029},
}

@book {quillen1967homotopical,
    AUTHOR = {Quillen, Daniel G.},
     TITLE = {Homotopical algebra},
    SERIES = {Lecture Notes in Mathematics, No. 43},
 PUBLISHER = {Springer-Verlag, Berlin-New York},
      YEAR = {1967},
     PAGES = {pp. iv+156},
   MRCLASS = {18.20 (55.00)},
  MRNUMBER = {0223432},
MRREVIEWER = {A. K. Bousfield},
	   %DOI = {10.1007/BFb0097438}
}

@article {rezk2001model,
    AUTHOR = {Rezk, Charles},
     TITLE = {A model for the homotopy theory of homotopy theory},
   JOURNAL = {Trans. Amer. Math. Soc.},
  FJOURNAL = {Transactions of the American Mathematical Society},
    VOLUME = {353},
      YEAR = {2001},
    NUMBER = {3},
     PAGES = {973--1007},
      ISSN = {0002-9947},
   MRCLASS = {55U35 (18G30)},
  MRNUMBER = {1804411},
MRREVIEWER = {Brooke E. Shipley},
       %DOI = {10.1090/S0002-9947-00-02653-2},
       %URL = {https://doi.org/10.1090/S0002-9947-00-02653-2},
}

@book {riehl2014categorical,
    AUTHOR = {Riehl, Emily},
     TITLE = {Categorical homotopy theory},
    SERIES = {New Mathematical Monographs},
    VOLUME = {24},
 PUBLISHER = {Cambridge University Press, Cambridge},
      YEAR = {2014},
     PAGES = {xviii+352},
   MRCLASS = {18G55 (18D20 55U35)},
  MRNUMBER = {3221774},
MRREVIEWER = {David A. Blanc},
       %DOI = {10.1017/CBO9781107261457},
}

@book {simpson2012homotopy,
    AUTHOR = {Simpson, Carlos},
     TITLE = {Homotopy theory of higher categories},
    SERIES = {New Mathematical Monographs},
    VOLUME = {19},
 PUBLISHER = {Cambridge University Press, Cambridge},
      YEAR = {2012},
      %DOI  = {10.1017/CBO9780511978111}
}

@article {stanculescu2014constructing,
    AUTHOR = {Stanculescu, Alexandru E.},
     TITLE = {Constructing model categories with prescribed fibrant objects},
   JOURNAL = {Theory Appl. Categ.},
  FJOURNAL = {Theory and Applications of Categories},
    VOLUME = {29},
      YEAR = {2014},
     PAGES = {No. 23, 635--653},
   MRCLASS = {18G55 (18D20)},
  MRNUMBER = {3274498},
MRREVIEWER = {Emily Riehl},
	   %URL = {http://www.tac.mta.ca/tac/volumes/29/23/29-23abs.html}
}

@inproceedings {street1974elementary,
    AUTHOR = {Street, Ross},
     TITLE = {Elementary cosmoi. {I}},
 BOOKTITLE = {Category {S}eminar ({P}roc. {S}em., {S}ydney, 1972/1973)},
    SERIES = {Lecture Notes in Math., Vol. 420},
     PAGES = {134--180},
 PUBLISHER = {Springer, Berlin},
      YEAR = {1974},
   MRCLASS = {18D99},
  MRNUMBER = {0354813},
MRREVIEWER = {H. Gonshor},
}

\end{document}